\documentclass[11pt]{amsart}
\usepackage{lmodern}
\usepackage{amsmath, amsthm, amssymb, amsfonts}
\usepackage[normalem]{ulem}
\usepackage{hyperref}

\usepackage{mathrsfs}

\usepackage{verbatim} 
\usepackage{longtable}
\usepackage{mathabx}

\usepackage{mathtools}

\usepackage{tikz}
\usetikzlibrary{decorations.pathmorphing}
\tikzset{snake it/.style={decorate, decoration=snake}}

\usepackage{caption}

\usepackage{tikz-cd}
\usetikzlibrary{arrows}

\theoremstyle{plain}
\newtheorem{thm}{Theorem}[section]

\newtheorem{lem}[thm]{Lemma}
\newtheorem{prop}[thm]{Proposition}
\newtheorem{conj}[thm]{Conjecture}
\newtheorem{question}[thm]{Question}

\theoremstyle{definition}

\theoremstyle{remark}
\newtheorem{rmk}[thm]{Remark}

\newcommand{\BA}{{\mathbb{A}}}
\newcommand{\BB}{{\mathbb{B}}}
\newcommand{\BC}{{\mathbb{C}}}

\newcommand{\BG}{{\mathbb{G}}}

\newcommand{\BK}{{\mathbb{K}}}

\newcommand{\BP}{{\mathbb{P}}}
\newcommand{\BQ}{{\mathbb{Q}}}
\newcommand{\BR}{{\mathbb{R}}}

\newcommand{\BX}{{\mathbb{X}}}

\newcommand{\BZ}{{\mathbb{Z}}}

\newcommand{\CA}{{\mathcal A}}
\newcommand{\CB}{{\mathcal B}}

\newcommand{\CE}{{\mathcal E}}
\newcommand{\CF}{{\mathcal F}}

\newcommand{\CH}{{\mathcal H}}

\newcommand{\CL}{{\mathcal L}}

\newcommand{\CO}{{\mathcal O}}

\newcommand{\CR}{{\mathcal R}}

\newcommand{\CU}{{\mathcal U}}

\DeclareFontFamily{OT1}{rsfs}{}
\DeclareFontShape{OT1}{rsfs}{n}{it}{<-> rsfs10}{}
\DeclareMathAlphabet{\curly}{OT1}{rsfs}{n}{it}

\newcommand{\git}{\mathbin{
  \mathchoice{/\mkern-6mu/}
    {/\mkern-6mu/}
    {/\mkern-5mu/}
    {/\mkern-5mu/}}}

\usepackage{tikz}
\usepackage{lmodern}
\usetikzlibrary{decorations.pathmorphing}

\usepackage[all,cmtip]{xy}

\makeatletter
\let\@wraptoccontribs\wraptoccontribs
\makeatother

\begin{document}
\title[Topology of Galois conjugate character varieties]{Topology of Galois conjugate character varieties}
\date{\today}

\author[J. Shen]{Junliang Shen}
\address{Yale University}
\email{junliang.shen@yale.edu}

\author[S. Zhang]{Siqing Zhang}
\address{Yale University}
\email{siqing.zhang@yale.edu}


\begin{abstract}
We study the interaction between integral structures, automorphisms, and tautological relations for the cohomology of character varieties. Based on this, we propose a method to detect differences in the homotopy types of Galois conjugate character varieties. As an application, we find the first example of a pair of Galois conjugate character varieties that are not homotopy equivalent, answering negatively a 2005 question of Hausel.

\end{abstract}

\maketitle

\setcounter{tocdepth}{1} 

\tableofcontents
\setcounter{section}{-1}

\section{Introduction}
Throughout, we work over the complex numbers $\BC$.

\subsection{Galois conjugate character varieties}\label{sec0.1}

Let $C$ be a nonsingular projective connected curve of genus $g\geq 2$, and let $n\geq 2$ be a positive integer. We set $\xi_n:= e^{\frac{2\pi \sqrt{-1}}{n}}$ to be the standard primitive $n$-th root of unity. For any $d$ coprime to $n$, the character variety associated with $C,n,d$ is the affine GIT quotient with respect to the conjugation action:
\begin{equation*} M_{n,d}^B := \Big{\{}a_k, b_k \in \mathrm{GL}_n,~k=1,2,\dots,g: ~~\prod_{j=1}^g [a_j, b_j] = \xi_n^d\mathrm{Id}_n \Big{\}}\git \mathrm{GL}_n. \end{equation*}
This is also known as the \emph{Betti moduli space}, which is diffeomorphic to the \emph{Dolbeault moduli space} $M^\mathrm{Dol}_{n,d}$ of stable Higgs bundles on $C$ of rank $n$ and degree $d$ by non-abelian Hodge theory \cite{Hit, Hit1, Simp, Si1994II}.

The classical Narasimhan--Seshadri correspondence \cite{NS} gives a diffeomorphism between the unitary moduli space
\begin{equation*} N_{n,d}^B := \Big{\{}a_k, b_k \in \mathrm{U}_n,~k=1,2,\dots,g: ~~\prod_{j=1}^g [a_j, b_j] = \xi_n^d\mathrm{Id}_n \Big{\}}\git \mathrm{U}_n \subset M^B_{n,d} \end{equation*}
and the moduli space $N^{\mathrm{Dol}}_{n,d}$ of stable vector bundles on $C$ of rank $n$ and degree $d$. The non-abelian Hodge theory extends this to a diffeomorphism 
\[
M^{\mathrm{Dol}}_{n,d} \xrightarrow[\simeq]{~~C^\infty~~} M^B_{n,d},
\]
where $N^{\mathrm{Dol}}_{n,d}$ naturally embeds in $M^{\mathrm{Dol}}_{n,d}$ as the locus characterized by the vanishing of the Higgs field. When we are only concerned with the underlying manifolds or topological spaces, we use $M_{n,d}$ and $N_{n,d}$ to denote these spaces for convenience.

Over 50 years ago, Harder--Narasimhan \cite{HN} proved that the manifolds $N_{n,d}, N_{n,d'}$ are homeomorphic if and only if
\begin{equation}\label{condition}
d \equiv \pm d' \pmod{n},
\end{equation}
where Betti numbers were used to distinguish the topology of $N_{n,d}, N_{n,d'}$. Under the condition (\ref{condition}), there is an isomorphism of varieties $N^{\mathrm{Dol}}_{n,d} \simeq N^{\mathrm{Dol}}_{n,d'}$.

The question of the topological dependence of $M_{n,d}$ on $d$ is more subtle and more interesting. In \cite{H_Survey, HRV}, it was observed that $M^B_{n,d}, M^B_{n,d'}$ are Galois conjugate varieties --- they are given by the underlying topological spaces of the complex varieties obtained from the same abstract variety defined over $\BQ(\xi_n)$ via different embeddings $\BQ(\xi_n)\hookrightarrow \BC$. Therefore, they have identical Betti numbers. Furthermore, most Galois conjugate varieties are homeomorphic over $\BC$, \emph{e.g.} \cite{Reed}. The following question of Hausel from 2005 asks whether there is a homeomorphism connecting Galois conjugate character varieties.

\begin{question}\cite[Problem 3.12]{H_Survey}\label{question}
For $d,d'$ coprime to $n$, are $M_{n,d}$ and $M_{n,d'}$ homeomorphic?
\end{question}

A homeomorphism between $M_{n,d}$ and $M_{n,d'}$, if it were to exist, would be very mysterious, as it could not preserve the unitary locus by the Harder–Narasimhan result. On the other hand, to the best of our knowledge, all topological invariants computed so far do not distinguish $M_{n,d}$ from $M_{n,d'}$. The purpose of this paper is to explore new cohomological structures for character varieties which are strong enough to distinguish the topology of Galois conjugate character varieties. 

We note that when $n\leq 4$, any pair $d,d'$ coprime to $n$ must satisfy (\ref{condition}). The corresponding manifolds $M_{n,d}, M_{n,d'}$ are obviously homeomorphic, as their Dolbeault models are isomorphic $M^{\mathrm{Dol}}_{n,d} \simeq M^{\mathrm{Dol}}_{n,d'}$ as algebraic varieties. The first nontrivial example to study Question \ref{question} occurs when $(g,n)=(2,5)$, for which our method yields a negative answer.

\begin{thm}\label{thm0.2}
    Let $C$ be a curve of genus $g=2$. The topological spaces $M_{5,1}$ and $M_{5,2}$ associated with $C$ are not homotopy equivalent.
\end{thm}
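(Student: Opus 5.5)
Assume, for contradiction, a homotopy equivalence $f\colon M_{5,1}\to M_{5,2}$. It induces a graded ring isomorphism $f^*\colon H^*(M_{5,2},\BZ)\to H^*(M_{5,1},\BZ)$ which respects the integral lattices. The plan is to find an invariant of the integral cohomology ring, together with the natural symmetries acting on it, that differs for $d=1$ and $d=2$. Rationally the two rings are abstractly isomorphic by Galois conjugation, so the invariant must see the $\BZ$-structure. Since $M$ deformation retracts onto the nilpotent cone and $H^*$ is torsion-free in low degrees, the relevant data are the lattices $H^k(M_{5,d},\BZ)\subset H^k(M_{5,d},\BQ)$.

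The first step is to describe both rings via tautological classes. The rational cohomology $H^*(M_{n,d},\BQ)$ is generated by the Künneth components $c_k(\gamma)$, $\gamma\in H^*(C)$, of the Chern characters of a normalized universal family. Because $d$ is coprime to $n$, this family is defined only up to twisting by a line bundle from the base. This changes the classes by explicit polynomial substitutions with denominators involving $n$ and $d$. Consequently the integral lattice, written in terms of the normalized rational generators, depends on $d \bmod n$. Together with it, the group $\Gamma=H^1(C,\BZ/n)$ acts by tensoring with $n$-torsion line bundles, and there is the $\mathrm{Sp}(2g,\BZ)$ (mapping class group) action. Both act compatibly on the integral structure.

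The second step is to reduce to a low degree where everything is computable. For $g=2,n=5$ I would work in the lowest cohomological degrees, namely $H^2$ and $H^4$ and possibly the part of $H^6$ generated by tautological classes. Here $H^*$ decomposes as the $\Gamma$-invariant part tensored with $H^*(\mathrm{Jac})$. I want a rigidity statement: any ring isomorphism between the two integral rings must, after composing with automorphisms from the symmetries above, send tautological generators to tautological generators up to scalars. The expected mechanism is that $H^2(M,\BQ)$ is one-dimensional on the invariant part (spanned by a class like $\alpha$), and $H^3$ is $H^1(C)$ via $\psi$-classes. So $f^*$ must send $\alpha\mapsto \lambda\alpha$ and act on $\psi$-classes by some $A\in \mathrm{GL}(H^1(C,\BZ))$. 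The tautological relations in degree $4$ (e.g.\ among $\alpha^2,\beta,\sum\psi_i\psi_{i+g}$) then force $A$ to be symplectic up to scalar and pin down $\lambda$ up to sign.

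The final step is arithmetic. Compute the integral lattice in $H^4$ (or a higher degree if needed) generated by integral Chern classes for $d=1$ and $d=2$. Show that no $\lambda=\pm1$ and symplectic $A$ matches them. For instance, compare a discriminant or the index of $\BZ[\alpha,\beta,\psi]_{\deg 4}$ in $H^4(M,\BZ)$, or a mod-$5$ divisibility such as whether $\alpha^2 - c\beta$ is divisible by $5$, where $c$ depends on $d^2 \bmod 5$, i.e.\ on $1$ versus $4$. The main obstacle is knowing $H^4(M_{5,d},\BZ)$ integrally rather than just the tautological sublattice, since torsion-freeness and generation over $\BZ$ are delicate. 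I would first try to use that the integral Chern classes of the universal bundle, together with a suitable integral basis, generate in low degree (e.g.\ through a Białynicki-Birula decomposition of $M^{\mathrm{Dol}}$, which gives torsion-free cohomology and a cell-type structure). Failing that, I would use only the rational subring together with a canonically determined integral class, namely $c_2$ of the $\mathrm{PGL}$ tangent bundle or the Pontryagin classes, which are homotopy-invariant up to the rational Pontryagin classes and integral in a controlled way. The coefficients of these classes in the tautological basis should depend on $d$ modulo $5$ in a way that distinguishes $d=1$ from $d=2$.
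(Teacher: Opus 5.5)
Your proposal has a genuine gap at the rigidity step, and that step is where the entire difficulty of the theorem lies.

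\textbf{There are no low-degree relations to exploit.} By Theorem \ref{thm0.8}(a), for $(g,n)=(2,5)$ the generators $\alpha_i,\beta_i,\psi_{i,j}$ of $R_{2,5}$ satisfy no relation at all below degree $r_{2,5}=22$. This covers degree $4$ and degree $6$. Note also that $\sum_j\psi_{2,j}\psi_{2,j+g}$ has degree $6$, not $4$, and that $R^4_{2,5}$ is freely spanned by $\alpha_2^2,\alpha_3,\beta_2$. So in the range where you propose to work, the ring is free graded-commutative. Nothing there forces your matrix $A$ to be symplectic, or the action on $R^4$ to be diagonal.

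\textbf{Why this breaks the arithmetic step.} By Proposition \ref{prop2.4}, the lattice $R^4_{2,5,d,\BZ}$ is spanned by $\theta_d^2$ and the images of $c_2(e_{[\mathbf{1}_C]})$ and $c_2(e_{[\mathrm{pt}_C]})$. In the basis $(\alpha_2^2,\alpha_3,\beta_2)$, these depend on $d$ through the $\beta_2$-coefficient $-d/n$ and through the $\alpha_2^2$-entries. A diagonal map cannot carry the $d$-lattice to the $d'$-lattice unless $d'\equiv\pm d\pmod n$. However, the automorphism of the free algebra
\[
\alpha_2\mapsto\lambda\alpha_2,\quad \alpha_3\mapsto\alpha_3+\tfrac{d'-d}{n}\beta_2+e\,\alpha_2^2,\quad \beta_2\mapsto\beta_2+f\,\alpha_2^2
\]
carries one lattice exactly onto the other, for suitable rational $\lambda,e,f$. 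Hence no invariant extracted from low-degree integral data can separate $d=1$ from $d=2$. This includes an index, a discriminant, or a mod-$5$ divisibility of some tautological combination. You first need to know that every graded automorphism of $R_{2,5}$ preserves the lines $\BQ\alpha_3$ and $\BQ\beta_2$.

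The symmetries you invoke do not supply this. $\Gamma$ acts trivially on cohomology, and $\mathrm{Sp}$ only moves the $\psi$'s. What must be controlled is the abstract group $G_{2,5}=\mathrm{Aut}^{\mathrm{gr}}(R_{2,5})$.

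\textbf{What the paper does instead.} The needed rigidity is Theorem \ref{thm0.7}, a global statement that is invisible in low degrees. The paper proves it in four steps.
\begin{enumerate}
\item[(i)] The Hausel--Rodriguez-Villegas/Mellit Poincar\'e series shows that the first relation is a unique one, $\Upsilon_{2,5}$, in degree $22$.
\item[(ii)] A Jeffrey--Kirwan computer calculation identifies $\Upsilon_{2,5}$ explicitly as the unique $\mathrm{Sp}$-invariant EJK relation that is homogeneous for the Chern grading.
\item[(iii)] The stabilizer of $\BQ\Upsilon_{2,5}$ has Lie algebra of dimension $12=\dim(\mathrm{GSp}_4\times\BG_m)$. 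So $H$ is the identity component of $G_{2,5}$, hence normal.
\item[(iv)] Since $22\neq 12$, the characters $\chi_{\alpha_2}$ and $\chi_{\CR}$ span $\BX^*(H)_\BQ$. This forces every element of $G_{2,5}$ to preserve $H$-weight spaces, and these separate $\alpha_2^2$, $\alpha_3$, $\beta_2$.
\end{enumerate}

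\textbf{Your fallback does not repair the gap.} Chern or Pontryagin classes of the tangent bundle are not homotopy invariants; rational Pontryagin classes are at best homeomorphism invariants. So they cannot yield a statement about homotopy equivalence.

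A smaller point: the rational splitting $H^*(\mathrm{Jac})\otimes R$ is neither integral nor preserved by an arbitrary ring isomorphism. The paper instead passes to the quotient by the ideal generated by $H^1$, which every ring isomorphism respects, and works with the induced lattice there.
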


In fact, our most structural results apply to arbitrary character varieties; see Theorems \ref{thm0.5}, Proposition \ref{normal_group}, and Theorem \ref{thm0.8}. The reason for focusing on the case $(g,n)= (2,5)$ is computational complexity: a key step in producing counterexamples requires solving a large linear algebra problem in the proof of Proposition \ref{prop3.12}. Stronger computational resources may yield further counterexamples; see Remark \ref{rmk_more}.

\subsection{Cohomology}\label{sec0.2}

There has been a long history of finding topologically distinct Galois conjugate varieties starting from Serre \cite{Serre} who used the fundamental group. However, for character varieties with $n> 2$ or $g>2$, we have
\[
\pi_1(M_{n,d}) \simeq \BZ^{2g}
\]
by \cite[Theorems 4.1 and 4.2]{BGPG}, which is clearly independent of $d$. Furthermore, we show in Section \ref{Homotopy} that all the higher homotopy groups are $d$-independent,
\[
\pi_k(M_{n,d}) \simeq \pi_k(M_{n,d'}), \quad k> 1, \quad (n,d)=(n,d')=1.
\]
This means homotopy groups are not powerful enough for our purpose.

We switch to cohomological invariants. The fact that $M^B_{n,d}, M^B_{n,d'}$ are Galois conjugate implies that they have isomorphic $\ell$-adic cohomology rings. In fact, it was further known that Galois conjugate character varieties share isomorphic rational cohomology rings, in the strongest sense.\footnote{We note that in general Galois conjugate varieties may not have isomorphic rational (or even real) cohomology rings; see for example \cite{Charles}.} The following theorem was obtained as a special case of \cite[Theorem 1.1(b)]{dCMSZ} using the Dolbeault moduli spaces, which can also be deduced from the observation of \cite[Remark 4.8]{Survey} via the Betti moduli spaces.

\begin{thm}\label{thm0.3}
Assume that the curve $C$ has genus $g\geq 2$, and assume that $(n,d) = 1$. We have an isomorphism of graded $\BQ$-algebras
\begin{equation}\label{coh_thm0.3}
H^*(M_{n,d}, \BQ)  \simeq H^*((\BC^*)^{2g},\BQ) \otimes {R}_{g,n}.
\end{equation}
Here $R_{g,n} = \oplus_{i} R_{g,n}^i$ is a graded-commutative algebra with tautological generators 
\begin{equation}\label{generators}
\alpha_i \in  R_{g,n}^{2i-2},~~ \beta_i \in R_{g,n}^{2i}, ~~ \psi_{i,j} \in R_{g,n}^{2i-1}, \quad 2\leq i \leq n, \quad 1\leq j \leq {2g}
\end{equation}
whose ideal of relations $I_{g,n}$ among these generators only depends on $n,g$, and is homogeneous with respect to the grading
\begin{equation}\label{Chern_grading}
\mathrm{deg}^C(\alpha_i) = \mathrm{deg}^C(\beta_i) = \mathrm{deg}^C(\psi_{i,j}) = i.
\end{equation}
In particular, the $\BQ$-algebra $R_{g,n}$ is essentially bigraded and is $d$-independent.
\end{thm}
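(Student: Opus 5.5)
We work on the Betti side and reduce to the $\mathrm{PGL}_n$-character variety. Write $\widetilde M^B_{n,d}$ for the fiber of the determinant map $M^B_{n,d}\to (\BC^*)^{2g}$, $(a_k,b_k)\mapsto(\det a_k,\det b_k)$, over the identity. This is the twisted $\mathrm{SL}_n$-character variety. The group $\Gamma=(\mu_n)^{2g}$ acts on it by scaling the $a_k,b_k$ by $n$-th roots of unity, and the map
\[
\big((\BC^*)^{2g}\times \widetilde M^B_{n,d}\big)/\Gamma \xrightarrow{\ \sim\ } M^B_{n,d}
\]
is an isomorphism. Taking $\Gamma$-invariants in rational cohomology, and noting that $\Gamma$ acts trivially on $H^*((\BC^*)^{2g},\BQ)$ because it acts through translations of a connected group, we get
\[
H^*(M_{n,d},\BQ)\simeq H^*((\BC^*)^{2g},\BQ)\otimes H^*(\widetilde M_{n,d},\BQ)^{\Gamma}.
\]
We therefore set $R_{g,n}:=H^*(\widetilde M_{n,d},\BQ)^\Gamma$, which is the cohomology of the $\mathrm{PGL}_n$-character variety. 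It remains to produce the generators and to show that the ideal of relations is $d$-independent and $\deg^C$-homogeneous.

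\textbf{Generators.} We take a universal (twisted) $\mathrm{PGL}_n$-local system $\mathbb U$ on $C\times \widetilde M$, normalized so that its characteristic classes live rationally. We write the Künneth decomposition of its Chern character, or equivalently of the normalized classes $c_i(\mathbb U)$ of $\mathrm{End}$, as
\[
c_i = [C]\otimes\alpha_i + \sum_j e_j\otimes\psi_{i,j} + 1\otimes\beta_i ,
\]
so that $\alpha_i$, $\psi_{i,j}$, $\beta_i$ have degrees $2i-2$, $2i-1$, $2i$. Generation of the cohomology by these classes is Markman's theorem, transported along non-abelian Hodge theory. The $\Gamma$-invariance of the tautological classes holds because they come from the canonical universal object.

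\textbf{$d$-independence of relations.} We would use Galois conjugacy. The varieties $M^B_{n,d}$ and $M^B_{n,d'}$ arise from one $\BQ(\xi_n)$-variety $X$ via $\sigma:\xi_n\mapsto\xi_n^{d'/d}$, and the universal local system descends as well. Comparison of $\ell$-adic cohomology then gives ring isomorphisms $H^*(M_{n,d},\BQ_\ell)\simeq H^*_{et}(X_{\bar{\BQ}},\BQ_\ell)\simeq H^*(M_{n,d'},\BQ_\ell)$ that send tautological classes to tautological classes, since Chern classes are algebraic and defined over the base. Consequently a polynomial relation with $\BQ$-coefficients among the generators holds for $d$ if and only if it holds for $d'$, because $\BQ\subset\BQ_\ell$ and the generators are rational.

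\textbf{Homogeneity.} Homogeneity in $\deg^C$ is the main obstacle, since it is a weight statement. On $M^B$ the tautological class coming from $c_i$ is of Hodge–Tate type with weight $4i$, the same for $\alpha_i$, $\beta_i$ and $\psi_{i,j}$, because these classes are pure in the mixed Hodge structure of the character variety. This follows from Shende's / Hausel–Letellier–Rodriguez-Villegas's weight computations, or from the curious Hard Lefschetz setting. The weight filtration is split by a ring-compatible Deligne splitting, and the tautological generators lie in pure pieces. An ideal of relations among generators that are pure of weights $4i$ must then be homogeneous for the weight grading, which equals $4\deg^C$. The step I would need to check most carefully is that a ring grading splitting $W$ exists with the tautological classes homogeneous. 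I would first try the grading induced by the $\BG_m$-scaling action on $M^{\mathrm{Dol}}$ via the perverse filtration under $P=W$. If that fails, I would argue directly: each relation decomposes into $\deg^C$-components, the components lie in distinct graded pieces of $\mathrm{Gr}^W$, and each component therefore vanishes separately.
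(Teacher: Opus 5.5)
The overall architecture is the paper's: the $\Gamma$-quotient plus Künneth for the product decomposition, Markman/Shende for generators, Galois conjugation for $d$-independence, and Shende's Hodge--Tate splitting for the Chern grading. The genuine gap is in your $d$-independence step.

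The Galois-induced isomorphism $H^*(\widehat M_{n,d},\BQ_\ell)\to H^*(\widehat M_{n,d'},\BQ_\ell)$ does \emph{not} send the tautological classes to themselves. Algebraicity only makes the classes $\mathrm{ch}'_i$ Galois-invariant in the Tate-twisted groups $H^{2i}(B\mathrm{PGL}_n,\BQ_\ell(i))$. The untwisted classes matching the rational Betti classes are $t_\ell^{-i}\mathrm{ch}'_i$, and $\sigma$ rescales $t_\ell$ by the cyclotomic character. The curve enters only topologically, through $\Delta_C$, whose cohomology carries no twist. Hence
$\alpha_i\mapsto\chi_\ell(\sigma)^{-i}\alpha_i$, $\beta_i\mapsto\chi_\ell(\sigma)^{-i}\beta_i$ and $\psi_{i,j}\mapsto\chi_\ell(\sigma)^{-i}\psi_{i,j}$. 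Writing both sides as a single $H^*_{\mathrm{et}}(X_{\bar\BQ},\BQ_\ell)$ hides exactly this, since the two base changes of $X$ are identified only $\sigma$-semilinearly.

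Consequently a relation $\sum_c R_c$, with $R_c$ its Chern-homogeneous pieces, is carried to $\sum_c \chi_\ell(\sigma)^{-c}R_c$. That is a different polynomial, so your conclusion that an arbitrary $\BQ$-relation holds for $d$ if and only if it holds for $d'$ does not follow from what you wrote. The repair is to prove Chern-homogeneity first, for every $d$, and then transfer each homogeneous piece separately, since on it the Galois map acts by a single nonzero scalar. This is what the paper does, by composing the Galois isomorphism with the automorphism $\gamma\mapsto\chi^i\gamma$ on $H_{(i)}$. Alternatively, take $\ell\nmid n$ and choose $\sigma$ acting trivially on $\mu_{\ell^\infty}$. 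This is possible because $\BQ(\xi_n)$ and $\BQ(\mu_{\ell^\infty})$ are linearly disjoint, and it gives $\chi_\ell(\sigma)=1$. Some such argument is needed either way.

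For homogeneity, the input you need is exactly Shende's theorem. It says $H^*(\widehat M^B_{n,d})$ is Hodge--Tate with $\alpha_i,\beta_i,\psi_{i,j}\in W_{2i}\cap F^i$; note this is weight $2i$ and type $(i,i)$, not weight $4i$. Then $\bigoplus_i W_{2i}\cap F^i$ is a multiplicative splitting, and it is defined over $\BQ$ because it is spanned by monomials in rational generators.

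Neither of your fallbacks gives this:
\begin{enumerate}
\item The $\BG_m$-scaling on $M^{\mathrm{Dol}}$ acts trivially on cohomology, because $\BG_m$ is connected, so it induces no grading.
\item The argument through $\mathrm{Gr}^W$ fails. The pieces $R_c$ are classes in $H^*$, not in $\mathrm{Gr}^W$. From $\sum_c R_c=0$ and $R_c\in W_{2c}$ you only get $R_{c_{\max}}\in W_{2c_{\max}-2}$. To conclude $R_{c_{\max}}=0$ you also need $R_{c_{\max}}\in F^{c_{\max}}$, together with $F^{c}\cap W_{2c-2}=0$ for Hodge--Tate structures. That is, you need the Hodge filtration, not just the weights.
\end{enumerate}
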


We will review Theorem \ref{thm0.3} in Section \ref{pf_of_thm_0.3}, where $R_{g,n}$ can be realized as the cohomology of the moduli of $\mathrm{PGL}_n$-character variety. Although it seems that this theorem only gives $d$-independent information, it turns out to be a key ingredient in the proof of Theorem \ref{thm0.2}. The grading (\ref{Chern_grading}) is the \emph{Chern grading}. It can be regarded either as a grading splitting the weight filtration for $M^B_{n,d}$ or as a grading splitting the perverse filtration for the Hitchin system associated with $M^{\mathrm{Dol}}_{n,d}$, in the context of the (now-proven) $P=W$ conjecture \cite{dCHM1,Shende,MS_PW,HMMS}.


We conjecture that the topological types of Galois conjugate character varieties are all distinct unless the Dolbeault moduli spaces are obviously isomorphic (\emph{i.e.}, the condition (\ref{condition}) holds); furthermore, this is detected by the integral cohomology ring.

\begin{conj}\label{main_conj}
    Assume that $(n,d)= (n,d')=1$. We have an isomorphism of integral cohomology rings
    \[
    \left( H^*(M_{n,d}, \BZ), ~~\cup \right) \simeq  \left( H^*(M_{n,d'}, \BZ), ~~\cup \right)
    \]
    as graded algebras if and only if (\ref{condition}) holds.
\end{conj}

Our main evidence for Conjecture \ref{main_conj} is Theorem \ref{thm0.5} below, which relies on cohomological automorphisms.


\subsection{Cohomological automorphisms}
Let $G_{g,n}$ be the group of graded automorphisms of the graded $\BQ$-algebra $R_{g,n} = \oplus_i R_{g,n}^i$ in Theorem \ref{thm0.3}, \emph{i.e.},
\[
G_{g,n} = \mathrm{Aut}^{\mathrm{gr}}(R_{g,n}).
\]
Elements in $G_{g,n}$ are not assumed to preserve the Chern grading (\ref{Chern_grading}) of Theorem \ref{thm0.3}. We consider the low even degree components of $R_{g,n}$:
\[
R_{g,n}^2 = \BQ \alpha_2, \quad R_{g,n}^4= \BQ \alpha_2^2 \oplus \BQ\alpha_3 \oplus \BQ \beta_2.
\]
Clearly, any $\varphi \in G_{g,n}$ preserves the line $\BQ\alpha_2$, and therefore the line $\BQ \alpha^2_2$ in $R^4_{g,n}$. We say that $\varphi \in G_{g,n}$ is \emph{weakly diagonal} if $\varphi$ further preserves both lines $\BQ\alpha_3$ and $\BQ\beta_2$ in $R^4_{g,n}$, \emph{i.e.}
\[
\varphi(\alpha_3) = \mu \alpha_3, \quad \varphi(\beta_2)= \nu \beta_2, \quad \quad \mu, \nu \in \BQ^\ast.
\]
Weakly diagonal elements form a subgroup $G_{g,n}^{\mathrm{wd}} \subset G_{g,n}$.

\begin{thm}\label{thm0.5}
If every element in $G_{g,n}$ is weakly diagonal, \emph{i.e.}, 
\begin{equation}\label{wd=G}
G^{\mathrm{wd}}_{g,n} = G_{g,n},
\end{equation}
then Conjecture \ref{main_conj} holds for $g$ and $n$.
\end{thm}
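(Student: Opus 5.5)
The direction ``if'' is immediate: under (\ref{condition}) the Dolbeault models satisfy $M^{\mathrm{Dol}}_{n,d}\simeq M^{\mathrm{Dol}}_{n,d'}$ as varieties (via dualizing and tensoring by a line bundle). Hence the underlying manifolds are diffeomorphic and the integral cohomology rings agree. The content is the converse. Suppose
\[
\Phi\colon H^*(M_{n,d},\BZ)\to H^*(M_{n,d'},\BZ)
\]
is an isomorphism of graded rings. The plan is to tensor with $\BQ$ and use Theorem \ref{thm0.3} on both sides, so that $\Phi_\BQ$ becomes a graded automorphism of $H^*((\BC^*)^{2g},\BQ)\otimes R_{g,n}$. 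Then we extract from it an element of $G_{g,n}$, and compare how the integral lattices $H^*(M_{n,d},\BZ)/\mathrm{tors}$ and $H^*(M_{n,d'},\BZ)/\mathrm{tors}$ sit inside the common rational algebra.

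\emph{Step 1: removing the torus factor.} The subalgebra $R_{g,n}$ should be characterized intrinsically, so that $\Phi_\BQ$ induces $\varphi\in G_{g,n}$. I would use the quotient by the ideal generated by $H^1$. Since $H^1(M_{n,d},\BQ)=H^1((\BC^*)^{2g},\BQ)$ (as $R^1_{g,n}=0$), the quotient $H^*/(H^1)$ is canonically $R_{g,n}$, and $\Phi_\BQ$ descends to it. This does not use the Chern grading, which is exactly why $G_{g,n}$ is defined without it. Integrally, I would use the same quotient $H^*(M,\BZ)/(H^1(M,\BZ))$ modulo torsion. Via the $\mathrm{PGL}_n$-character variety and the finite cover $(\BC^*)^{2g}\times M^{\mathrm{PGL}}\to M_{n,d}$, this gives a lattice $\Lambda_d\subset R_{g,n}$, at least in low degrees. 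In particular $\varphi(\Lambda_d)=\Lambda_{d'}$.

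\emph{Step 2: computing $\Lambda_d$ in degree $4$.} This is the main obstacle. The tautological generators come from the normalized (twisted) Chern character of a universal bundle. Integral classes instead come from actual Chern classes $c_k(\mathbb E)$, or from $c_k$ of $\mathrm{End}\,\mathbb E$, paired with integral homology classes of $C$. Rewriting $c_2$ and $c_3$ in terms of $\alpha_2^2,\alpha_3,\beta_2$ introduces coefficients involving $d/n$. I expect $\Lambda_d\cap R^4_{g,n}$ to contain a class of the form $a\alpha_3+b(d)\beta_2+c(d)\alpha_2^2$, or to be cut out by a congruence, where the ratio between the $\alpha_3$- and $\beta_2$-coefficients is determined by $d$ modulo $n$. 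I would first try working on the Betti side, where the integral generators are explicit, to pin down the lattice exactly, or at least a $d$-sensitive invariant of it.

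\emph{Step 3: conclusion.} By hypothesis (\ref{wd=G}), $\varphi$ acts by $\alpha_2\mapsto\lambda\alpha_2$, $\alpha_3\mapsto\mu\alpha_3$, $\beta_2\mapsto\nu\beta_2$ on $R^{\le 4}_{g,n}$. Unimodularity on the rank-one lattice $\Lambda_d^2$ forces $\lambda=\pm1$. Applying $\varphi$ to the distinguished elements of $\Lambda^4_d$ found in Step 2 and requiring them to land in $\Lambda^4_{d'}$ gives constraints on $\mu,\nu$ together with a congruence relating $d$ and $d'$. Running the argument also with $\varphi^{-1}$ should show that this congruence forces $d'\equiv\pm d\pmod n$. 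If degree $4$ alone yields only something weaker, such as $d^2\equiv d'^2$, I would add the cup-product pairing $\Lambda^2\times\Lambda^2\to\Lambda^4$, or the integral classes built from the odd generators $\psi_{i,j}$, to remove the ambiguity.
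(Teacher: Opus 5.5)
Your Step 1 and the shape of Step 3 match the paper's argument. You pass to $R_{g,n}$ as the quotient by the ideal generated by $H^1$, obtain $\varphi\in G_{g,n}$ with $\varphi(\Lambda_d)=\Lambda_{d'}$, and compare the degree-$4$ lattices using weak diagonality. The gap is Step 2, which you yourself flag as the main obstacle. It is the actual content of the theorem, and without it Step 3 cannot be carried out; this is why you end up hedging about possibly only getting $d^2\equiv d'^2$.

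\textbf{Exact determination of $\Lambda^4_d$.} You need $\Lambda^4_d$ \emph{exactly}, not just some integral classes of the expected shape. Knowing that $\varphi$ sends one class of $\Lambda_d$ into $\Lambda_{d'}$ constrains nothing unless $\Lambda_{d'}$ is pinned down. The paper gets this from Markman's integral generation theorem: the Chern classes (including the half-integer ones) of the $K$-theoretic K\"unneth components $e_{[\mathbf 1_C]},e_{[\mathrm{pt}_C]},e_{[\delta_j]}$ of a normalized universal bundle generate $H^*(M_{n,d},\BZ)$. In degrees $\le 4$, every monomial containing an odd generator has a degree-one factor. Hence
$\Lambda^2_d=\BZ\theta_d$ and $\Lambda^4_d=\BZ\theta_d^2\oplus\BZ\,c_2(e_{[\mathbf 1_C]})\oplus\BZ\,c_2(e_{[\mathrm{pt}_C]})$.
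Your proposed detour through the Betti side does not supply this. The Betti construction gives the normalized classes pulled back from $H^*(B\mathrm{PGL}_n,\BQ)$, which are rational, whereas what is needed is an integral generation statement.

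\textbf{Explicit conversion to $\alpha_2^2,\alpha_3,\beta_2$.} Normalize $\mathcal U^\circ_d=\mathcal U_d\otimes p_M^*(\mathcal L_{c_0}^{-a}\otimes\mathcal L_{\det}^{-b})$ with $an+b\chi=1$. Riemann--Roch then gives $\Theta=d\,c_1(e_{[\mathbf 1_C]})-n\,c_1(e_{[\mathrm{pt}_C]})$ and $c_1(\mathcal U^\circ_d)=d\,\mathrm{pt}_C+b\Theta+(\text{odd terms})$. So the normalizing twist is $-\frac dn\mathrm{pt}_C-\frac bn\Theta$. Expanding $\mathrm{ch}^\alpha_2$ and $\mathrm{ch}^\alpha_3$ gives, modulo $\BQ\alpha_2^2$,
$c_2(e_{[\mathbf 1_C]})\equiv-\beta_2$ and $c_2(e_{[\mathrm{pt}_C]})\equiv-\alpha_3-\frac dn\beta_2$.
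Hence $\Lambda^4_d$ projects onto $\BZ\beta_2\oplus\BZ(\alpha_3+\frac dn\beta_2)\subset\BQ\alpha_3\oplus\BQ\beta_2$.

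Now take a diagonal map $\alpha_3\mapsto\mu\alpha_3$, $\beta_2\mapsto\nu\beta_2$ carrying this onto the corresponding $d'$-lattice.
\begin{enumerate}
\item Projecting to the $\alpha_3$-coordinate forces $\mu=\pm1$.
\item Intersecting with $\BQ\beta_2$ forces $\nu=\pm1$.
\item The image of $\alpha_3+\frac dn\beta_2$ must then lie in the $d'$-lattice, which gives $\nu\frac dn-\mu\frac{d'}n\in\BZ$, i.e.\ $d'\equiv\pm d\pmod n$.
\end{enumerate}
So degree $4$ alone suffices, and the $\alpha_2^2$-direction plays no role. The fallbacks you list ($\varphi^{-1}$, the cup-product pairing, odd classes) are not needed. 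As written, however, your proposal does not establish the conclusion, because these two ingredients are missing.
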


The theorem reduces the question of distinguishing the homotopy types of Galois conjugate character varieties to the study of the $d$-independent group $G_{g,n}$. In Section \ref{sec4}, we provide a linear-algebraic criterion to verify (\ref{wd=G}) for any pair $(g,n)$ (see Proposition \ref{normal_group}), and use this method to prove the following theorem which deduces Theorem \ref{thm0.2} from Theorem \ref{thm0.5}.

\begin{thm}\label{thm0.7}
We have $G^{\mathrm{wd}}_{2,5} = G_{2,5}$.
\end{thm}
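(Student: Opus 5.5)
We will show directly that every graded automorphism $\varphi$ of $R_{2,5}$ sends $\alpha_3$ into $\BQ\alpha_3$ and $\beta_2$ into $\BQ\beta_2$. The idea is to characterize these two lines in $R^4_{2,5}$ intrinsically, using only the cup product and the cohomological grading, so that any graded automorphism has to preserve them.

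The starting point is a general parametrization of $\varphi$ in low degrees. Since $R^2=\BQ\alpha_2$, we have $\varphi(\alpha_2)=\lambda\alpha_2$. On $R^4$ we write
\[
\varphi(\alpha_3)=a\alpha_3+b\beta_2+c\alpha_2^2,\qquad \varphi(\beta_2)=a'\alpha_3+b'\beta_2+c'\alpha_2^2 .
\]
The odd part $R^3$ is spanned by the $\psi_{2,j}$. There $\varphi$ acts by some matrix $A\in \mathrm{GL}_{2g}(\BQ)$ together with possible correction terms in the span of the products $\alpha_2\cdot(\text{odd degree }1)$. No such products exist, since $R^1=0$, so $\varphi|_{R^3}=A$.

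We then use the ideal $I_{2,5}$, which is explicitly known (for instance from the Chern-graded presentation of Theorem~\ref{thm0.3} and the computer algebra implicit in Proposition~\ref{prop3.12}). From it we extract low-degree invariants. The plan is to consider the multiplication maps
\[
m_x\colon R^{k}\to R^{k+4},\qquad y\mapsto xy,
\]
for $x\in R^4$, together with the cup-square map $R^3\times R^3\to R^6$, and to study the ranks of these maps. The loci in $\BP(R^4)$ where such ranks drop, or where $x$ is annihilated by a power of $\alpha_2$ or by products of the $\psi$'s, are $\varphi$-invariant subvarieties.

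The Chern grading (\ref{Chern_grading}) gives the key asymmetry. The classes $\alpha_3$ (Chern degree $3$) and $\beta_2$ (Chern degree $2$) sit in different Chern degrees, and $\alpha_2^2$ sits in Chern degree $4$. The perverse/weight-type vanishing in top Chern degrees should make, for example, $\beta_2^{N}$ and $\alpha_3^{N}$ behave differently under multiplication by $\alpha_2$ near the top degree. Concretely, I expect the set
\[
\{x\in R^4 : x\cdot R^{3}\wedge R^3 \text{ lands in a smaller subspace}\}
\]
together with the products $x\cdot \psi_{2,i}\psi_{2,j}$ to single out $\BQ\beta_2$ and $\BQ\alpha_3$. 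Each singled-out locus should come out as a union of finitely many lines, on which $\varphi$ acts. We then rule out any permutation of these lines by comparing the degrees in which the corresponding classes become nilpotent.

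Reducing to the coefficients $(a,b,c,a',b',c')$ turns this into checking that a polynomial system has solutions only with $b=c=a'=c'=0$. To get there, we impose $\varphi(I_{2,5})\subset I_{2,5}$ in degrees up to about $8$ or $10$, letting $\varphi$ be a general graded map with unknown coefficients on the generators. This is a linear algebra problem over a field of parameters, and it can be linearized by first using the $\mathrm{Sp}$/$\mathrm{GL}$ symmetry in the $\psi$'s to normalize $A$.

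The main obstacle is this step: finding enough low-degree relations to force $b=c=a'=c'=0$ while keeping the linear systems computationally tractable. If degree-$8$ relations are insufficient, I would go up to the top degree and use Poincaré-duality-type pairings, which are relative because $M$ is noncompact, to separate $\alpha_3$ from $\beta_2$.
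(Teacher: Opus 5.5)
Your proposal has a genuine gap: the constraints you plan to use do not exist. By Theorem~\ref{thm0.8}(a) there are no relations among the generators in degrees $<r_{2,5}=22$. So $I_{2,5}$ vanishes in every degree $\le 21$, and $R_{2,5}$ agrees there with the free graded-commutative algebra $\BA$. Imposing $\varphi(I_{2,5})\subset I_{2,5}$ ``in degrees up to about $8$ or $10$'' is therefore vacuous. The same holds for every rank-drop or annihilator locus built from products landing in degree $\le 21$. Multiplication by any nonzero $x\in R^4$ is injective in that range. Moreover, graded automorphisms of $\BA$ such as $\alpha_3\leftrightarrow\beta_2$ or $\alpha_3\mapsto\alpha_3+\beta_2$ preserve all such data and act transitively on $R^4\setminus\BQ\alpha_2^2$. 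Hence no characterization of $\BQ\alpha_3$ or $\BQ\beta_2$ from low-degree multiplicative structure is possible.

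Three further claims in the proposal also fail. First, $I_{2,5}$ is not explicitly known: the paper determines only its lowest-degree piece, the single relation $\Upsilon_{2,5}$. Second, the Chern grading cannot serve as an invariant, because elements of $G_{2,5}$ are not assumed to preserve it. Third, there is no intrinsic duality pairing on $R_{2,5}$. A pairing with compactly supported cohomology of the noncompact $\widehat{M}_{5,d}$ is extra structure that an abstract graded-ring automorphism need not respect.

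The missing idea is how to exploit the one available input, $\Upsilon_{2,5}$. All generators lie in degrees $<22$ and there are no relations below $22$, so every $\varphi\in G_{2,5}$ lifts canonically to $\mathrm{Aut}^{\mathrm{gr}}(\BA)$ and must stabilize the line $\BQ\Upsilon_{2,5}$. Imposing this directly gives a large nonlinear system in all the generator coefficients. The paper instead linearizes. The derivations $D$ with $D(\Upsilon_{2,5})\in\BQ\Upsilon_{2,5}$ form a $12$-dimensional space (Proposition~\ref{prop4.4}). This equals $\dim H$ for the connected subgroup $H\simeq\mathrm{GSp}_4\times\BQ^*$, so $H$ is the identity component of $G_{2,5}$ and hence normal. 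Conjugation by any $\varphi$ then acts on $\BX^*(H)_\BQ$ and fixes $\chi_{\alpha_2}$ and $\chi_{\CR}$, since both lines are $G_{2,5}$-stable. These two characters span $\BX^*(H)_\BQ$ because $r_{2,5}=22\neq 12=c_{2,5}$. So $\varphi$ preserves every $H$-weight space. The lines $\BQ\alpha_2^2,\BQ\alpha_3,\BQ\beta_2$ have distinct characters $2\chi_{\mathrm{sim}}+4\chi_{\mathrm{tor}}$, $\chi_{\mathrm{sim}}+3\chi_{\mathrm{tor}}$, $2\chi_{\mathrm{tor}}$, so each is preserved. This is where the Chern asymmetry you wanted actually enters. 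The normality-plus-character step is also what controls the non-identity components of $G_{2,5}$, which no infinitesimal or low-degree computation alone reaches.
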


A key geometric input is a distinguished tautological relation we found for the character variety associated with $(g,n)=(2,5)$ which we discuss in the next section.

\subsection{Tautological relations}

The group $G_{g,n}$ is governed by the relations between the tautological generators (\ref{generators}) --- the action of any element $\varphi \in G_{g,n}$ on (\ref{generators}) has to preserve the ideal of relations. Realizing this approach has two challenges. First, finding relations among the tautological classes is a longstanding question for $M_{n,d}$. When $n=2$, this was resolved by Hausel--Thaddeus \cite{HT2} more than 2 decades ago; since then not much progress was made in higher ranks. But in order to attack Question \ref{question}, we have to consider the cases of $n\geq 5$. Second, the ideal of relations in general is quite huge and it is difficult to compute the stabilizer. 

Our proof of Theorem \ref{thm0.7} relies on the general structural theorem (Theorem \ref{thm0.8}) below. It is based on the fact that there is a \emph{distinguished tautological relation} for rank $n\geq 3$. The proof of Theorem \ref{thm0.8} combines the following ingredients:
\begin{enumerate}
    \item[(i)] The closed formulas for the Poincar\'e polynomials for $M_{n,d}$  conjectured by Hausel--Rodriguez-Villegas \cite{HRV} and proved by Schiffmann \cite{Sch} and Mellit \cite{Mellit}.
    \item[(ii)] The tautological relations for the more classical moduli spaces $N_{n,d}$ described by Earl--Jeffrey--Kirwan \cite{EK, JK}.
\end{enumerate}
More precisely, (i) is used to show that there is only one relation in the lowest possible cohomological degree; (ii) combined with Theorem \ref{thm0.3} is applied to find candidates of the relations for $M_{n,d}$ from the Earl--Jeffrey--Kirwan relations for $N_{n,d}$; we call these candidates \emph{homogeneous} EJK relations. Miraculously, this process completely determines the desired distinguished relation for $g=2$ and $n=5$.

\begin{thm}\label{thm0.8} Assume $g \geq 2, n\geq 3$, and we set 
\[
r_{g,n} := 4g(n-1)-2n.
\]
The following statements hold for the graded $\BQ$-algebra $R_{g,n}$.
\begin{enumerate}
    \item[(a)] There are no relations among the classes (\ref{generators}) in degrees $< r_{g,n}$.
    \item[(b)] There is a unique relation 
    \[
    \Upsilon_{g,n} = 0 
    \]
    in $R^{r_{g,n}}_{g,n}$.
    \item[(c)] When $g=2$ and $n= 5$, there is a unique homogeneous symplectic-invariant EJK relation in degree $r_{2,5}=22$ that recovers the relation $\Upsilon_{2,5}=0$.
\end{enumerate}

\end{thm}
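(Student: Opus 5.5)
The plan is to compare the Poincaré series of the free graded-commutative algebra $F_{g,n}$ on the generators (\ref{generators}) with the actual Poincaré series of $R_{g,n}$. By Theorem \ref{thm0.3}, the latter is obtained from the Hausel--Rodriguez-Villegas formula, proved by Schiffmann and Mellit, after dividing by the factor $(1+t)^{2g}$ coming from $H^*((\BC^*)^{2g})$.

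Parts (a) and (b) then say that the surjection $F_{g,n}\to R_{g,n}$ is an isomorphism in degrees $<r_{g,n}$ and has one-dimensional kernel in degree $r_{g,n}$. So it suffices to show
\[
P_t(F_{g,n}) - P_t(R_{g,n}) = t^{r_{g,n}} + O(t^{r_{g,n}+1}).
\]
Since both sides are bigraded with respect to the Chern grading (\ref{Chern_grading}), I would work with the mixed Hodge (equivalently perverse) refined series. Here the HRV formula is a sum over partitions $\lambda\vdash n$. The term $\lambda=(1^n)$, or rather the leading term after the $\mathrm{PGL}_n$ normalization, should reproduce the free algebra in a stable range. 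The remaining partitions contribute corrections whose lowest $t$-degree must be computed.

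Concretely, I would use the "purity/stabilization" mechanism: as $g\to\infty$ the cohomology becomes free in a range growing linearly in $g$. The first deviation comes from the partition $\lambda=(n-1,1)$, or equivalently from the codimension of the first non-generic stratum. Its contribution starts at $t^{4g(n-1)-2n}$ with coefficient $-1$. Checking the exponent means evaluating the hook-length exponents in the HRV generating function for the two-part partitions and verifying that all other partitions start strictly later.

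An alternative sanity check for the exponent is the compact analogue $N_{n,d}$. By Earl--Jeffrey--Kirwan, its first relation sits in degree about $2g(n-1)$ up to corrections, and the doubled degree for $M$ is consistent with the weight doubling under $P=W$. The hypothesis $n\geq3$ enters because for $n=2$ the leading correction behaves differently; this matches Hausel--Thaddeus, whose first relation is in another degree.

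For (c), I would compute explicitly. Take the EJK relations for $N_{5,d}$ in degree $22$, namely the residue formulas for vanishing intersection pairings. Lift them to polynomials in $\alpha_i,\beta_i,\psi_{i,j}$ via the identification of Theorem \ref{thm0.3}, which is compatible with restriction to $N$. Then keep the Chern-homogeneous, $\mathrm{Sp}_{2g}$-invariant parts. The $\psi$'s enter only through the invariant combinations $\sum_k\psi_{i,k}\psi_{j,k+g}$.

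The next step is to show that the span of these homogeneous invariant EJK relations is one-dimensional. By (b), every invariant relation in degree $22$ of $R_{2,5}$ is a multiple of $\Upsilon_{2,5}$, since the unique relation is automatically $\mathrm{Sp}$-invariant. It then remains only to check that one such homogeneous EJK relation actually holds in $R_{2,5}$; this follows once it is shown to be nonzero and to lie in the kernel.

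The main obstacle is twofold. First, the combinatorial extraction of the exact lowest correction term from the HRV formula for all $g,n$. Second, the large linear-algebra computation in (c) that isolates the homogeneous invariant relation. For the latter I would first reduce to the finite-dimensional $\mathrm{Sp}_4$-invariant subspace of degree $22$ before solving.
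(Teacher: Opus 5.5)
Your reduction of (a),(b) to $P^{\mathrm{free}}(t)-P(t)=t^{r_{g,n}}+O(t^{r_{g,n}+1})$ via the Schiffmann--Mellit formula is the paper's. However, the mechanism you describe for extracting the leading correction would not work. The formula is not a sum over $\lambda\vdash n$: it is the $T^n$-coefficient of the plethystic logarithm of $\Omega(t,z,T)$. It therefore contains products $\Omega_{\mu_1}\cdots\Omega_{\mu_k}$ with $\sum|\mu_i|=n$, as well as Adams-operation terms, and individual terms can have higher-order poles at $z=1$. With $(1^n)$ principal, the relevant shape is the hook $(1^{n-2}2)$, not $(n-1,1)$; the latter has $\iota=n+2$ and is irrelevant for $n\ge 4$.

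Taken alone, $\Omega_{(1^{n-2}2)}$ has two cells with $\ell=0$, hence a double pole at $z=1$, so its contribution to the limit is undefined. Its $t$-order would also point to degree $4(g-1)(n-1)=r_{g,n}-(2n-4)$, which is too low. The key step is to combine it with the cross term $-\Omega_{(1^{n-1})}\Omega_{(1)}$ from $\log$, which has the same $t$-order. The difference is $\BB_{0,0}^2\prod_{i=1}^{n-3}\BB_{0,i}\,(\BB_{1,n-2}-\BB_{0,n-2})$. By Lemma \ref{lem3.6}, the last factor vanishes at $z=1$ with first-order term $(z-1)\BB_{0,n-2}(t,1)(t^{2n-4}+O(t^{2n-3}))$. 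This both reduces to a simple pole and raises the degree by exactly $2n-4$ to $r_{g,n}$, with coefficient $-1$.

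You then still need three further pieces:
\begin{enumerate}
\item the bound on all other products via $\iota(\mu)=\|\mu'\|^2$;
\item the bound $(2g-2)n^2(1-1/d)\ge r_{g,n}$ for the Adams terms;
\item the case $(g,n)=(2,4)$, where $\Omega_{(2^2)}$, $-\tfrac12\Omega_{(1^2)}^2$ and $-\tfrac12\psi_2(\Omega_{(1^2)})$ all reach $t^{r_{2,4}}$ and cancel.
\end{enumerate}
In the last case your claim that ``all other partitions start strictly later'' is false. The stabilization and stratum-codimension heuristics do not substitute for this computation. You should also not pass to the mixed-Hodge-refined series: only its Poincar\'e specialization is proved, and that specialization suffices.

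For (c), the step ``check that one such homogeneous EJK relation actually holds in $R_{2,5}$'' is circular, because you have no independent access to the ideal $I^M_{2,5}$. The missing idea is the inclusion in the other direction, Proposition \ref{prop3.11}. Restriction $H^*(\widehat{M}_{n,d},\BQ)\to H^*(N_{n,L},\BQ)$ is a ring map sending tautological classes to tautological classes, so $I^M_{g,n}\subset I^{\mathrm{EJK}}_{g,n,d}$. Moreover, $I^M_{g,n}$ is Chern-homogeneous by Theorem \ref{thm0.3}. Hence $\Upsilon_{2,5}$, which exists by (b) and is $\mathrm{Sp}$-invariant by uniqueness, is automatically a bihomogeneous invariant EJK relation.

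It then suffices for the Jeffrey--Kirwan computation to show that such relations in degree $22$ form a single line. Concretely, the pairing against $\BA^{\mathrm{Sp}_4,26}$ must have trivial left kernel on $\BA^{\mathrm{Sp}_4,22}_{(c)}$ for each $c\in[11,22]$ with $c\neq 12$, and a one-dimensional left kernel for $c=12$. Then $\Upsilon_{2,5}$ spans that line, and nothing else needs checking. Also, ``keep the Chern-homogeneous parts'' must mean intersecting $I^{\mathrm{EJK}}$ with each $\BA^{22}_{(c)}$, not projecting EJK relations onto Chern components. Since $I^{\mathrm{EJK}}$ is not Chern-homogeneous, such projections need not vanish on $N_{5,L}$.
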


In summary, the idea of the proof of Theorem \ref{thm0.2} lies in the following observations: Theorem \ref{thm0.5} shows for general $d,d'$ that any isomorphism
\begin{equation}\label{isom}
H^*(M_{n,d}, \BQ) \xrightarrow{\simeq} H^*(M_{n,d'}, \BQ)
\end{equation}
of graded $\BQ$-algebras preserving the integral structures cannot behave \emph{too nicely} on the tautological generators (\ref{generators}). The formula given by Theorem \ref{thm0.8}(c), on the other hand, shows that the distinguished relation forces that the isomorphism (\ref{isom}) has to act nicely on the tautological generators for $(g,n)=(2,5)$.

\begin{rmk}
    Theorem \ref{thm0.3} is used in the proof of Theorem \ref{thm0.2} twice independently. First, it is applied to identify canonically the rational cohomology for the spaces associated with various $d$; this allows us to reduce the study of integral cohomology to the study of the automorphism group $G_{g,n}$ as in Theorem \ref{thm0.5}. Second, the Chern grading is a crucial input in the proofs of Theorem \ref{thm0.7} and Theorem \ref{thm0.8}(c).
\end{rmk}

\subsection{Relations to other work}


As discussed in Section \ref{sec0.1}, Betti numbers were used by Harder--Narasimhan \cite{HN} in the 1970s to prove topological degree-dependence for moduli spaces of stable vector bundles on a curve. More recently, an interesting class of moduli spaces analogous to moduli spaces of stable vector bundles on curves has been studied: the moduli spaces of stable 1-dimensional sheaves on $\BP^2$. Although these spaces have identical Betti numbers as the Euler characteristic of the sheaves varies \cite{PB, MS_chi}, they have distinct rational cohomology rings \cite{LMP}.

The topological degree-dependence for character varieties is more subtle, since these varieties are Galois conjugate and even have isomorphic rational cohomology rings. Therefore, their topological differences, if they exist, are hidden deeper. To the best of our knowledge, the integral cohomology ring is the first known topological invariant that is sensitive to the degree of the character variety.

On the other hand, over the decades there have been many results concerning degree-independence properties for Dolbeault or Betti moduli spaces; see \cite{dCZ, dCMSZ, Da, GWZ, HRV, KK, MS_Pi, MSY2, Mellit, Sch, Yu}.

In the appendix, a short proof of the degree-dependence of the Dolbeault moduli space as a complex analytic space is provided.

\subsection{Acknowledgements}
We would like to thank David Zhiyuan Bai, David Fang, Davesh Maulik, Miguel Moreira, Weite Pi, Woonam Lim, and Qizheng Yin for helpful discussions on character varieties and tautological relations over the years. The result presented in Appendix was obtained during discussions with Andres Fernandez Herrero, and we are grateful to him for his helpful input. Our interest in studying the topology of Galois conjugate character varieties comes from Hausel’s beautiful survey articles \cite{H_Survey, Survey}.

The paper was completed while J.S. was visiting Universit\"at Bonn in July 2026.
J.S.~was supported by the NSF grant DMS-2301474 and a Sloan Research Fellowship. S.Z.~was supported by an AMS-Simons Travel Grant.

\section{Moduli spaces and tautological classes}\label{sec1}

In this section, we fix $C$ to be a curve of genus $g\geq 2$, and we fix a standard symplectic basis of $H^1(C, \BZ)$,
\[
\delta_j \in H^1(C, \BZ), \quad j=1,2,\cdots, 2g
\]
with intersection matrix
\[
\left( \int_C \delta_j \cup \delta_k\right)_{1\leq j,k\leq 2g}=
\begin{pmatrix}
0 & \mathrm{Id}_g \\
-\mathrm{Id}_g & 0
\end{pmatrix}.
\]
Assume $n,d$ are coprime integers with $n\geq 2$.

\subsection{Moduli spaces}\label{sec1.1}
The graded algebra $R_{g,n}$ is given by the cohomology of the moduli space of $\mathrm{PGL}_n$-character varieties which we review as follows. The references are \cite{HT1} and \cite[Section 2.4]{dCHM1}.

It is more convenient to first describe the moduli spaces associated with $\mathrm{SL}_n$. For degree $d$ coprime to $n$, the Betti moduli space associated with $\mathrm{SL}_n$ and $d$ is given by the GIT quotient
\begin{equation*} \widecheck{M}_{n,d}^B := \Big{\{}a_k, b_k \in \mathrm{SL}_n,~k=1,2,\dots,g: ~~\prod_{j=1}^g [a_j, b_j] = \xi_n^d\mathrm{Id}_n \Big{\}}\git \mathrm{PGL}_n. \end{equation*}
The corresponding Dolbeault moduli space $\widecheck{M}^{\mathrm{Dol}}_{n,d}$ parameterizes stable Higgs bundles $(\CE, \theta)$ on the curve $C$ with
\[
\mathrm{rk}(\CE) = n, \quad \mathrm{det}(\CE) \simeq L , \quad \mathrm{tr}(\theta) = 0,
\]
where $L \in \mathrm{Pic}^d(C)$ is a fixed line bundle on $C$ of degree $d$. It is clear that the Dolbeault moduli spaces are isomorphic for different choices of line bundles in $\mathrm{Pic}^d(C)$.

Recall that the \emph{abelian} moduli spaces (associated with rank $n=1$ and degree $d=0$) 
\[
 M^B_{1,0} = (\BC^*)^{2g},  \quad M^{\mathrm{Dol}}_{1,0} = \mathrm{Pic}^0(C) \times H^0(C, \omega_C)
\]
are naturally groups admitting actions on $M^B_{n,d}$ and $M^{\mathrm{Dol}}_{n,d}$ respectively. This induces natural actions of the finite group 
\[
\Gamma = \left(\mathrm{\BZ}/n\BZ \right)^{\oplus 2g}
\]
on the moduli spaces $\widecheck{M}^B_{n,d}$ and $\widecheck{M}^{\mathrm{Dol}}_{n,d}$, where the group $\Gamma$ is viewed as the subgroup of $n$-torsion elements in $M^{B}_{1,0}$ or $M^{\mathrm{Dol}}_{1,0}$.

The Betti and the Dolbeault moduli spaces associated with $\mathrm{PGL}_n$ and $d$ are the quotient spaces:
\[
\widehat{M}^{{B}}_{n,d}:= \widecheck{M}^{B}_{n,d}/\Gamma, \quad \widehat{M}^{\mathrm{Dol}}_{n,d}:= \widecheck{M}^{\mathrm{Dol}}_{n,d}/\Gamma.
\]
The non-abelian Hodge theory yields natural diffeomorphisms between the Betti and the Dolbeault moduli spaces
\[
M^\mathrm{Dol}_{n,d} \xrightarrow[~~C^\infty~~]{\simeq} M^B_{n,d}, \quad \widecheck{M}^\mathrm{Dol}_{n,d} \xrightarrow[~~C^\infty~~]{\simeq} \widecheck{M}^B_{n,d}, \quad \widehat{M}^\mathrm{Dol}_{n,d} \xrightarrow[~~C^\infty~~]{\simeq} \widehat{M}^B_{n,d}.
\]
As before, when we are only concerned with the underlying topology of the moduli spaces, we use $\widecheck{M}_{n,d}, \widehat{M}_{n,d}$ to denote the spaces associated with $\mathrm{SL}_n$ and $\mathrm{PGL}_n$ respectively.

The cohomology of the $\mathrm{GL}_n$ space $M_{n,d}$ can be recovered by the $\mathrm{PGL}_n$ space in a simple form. More precisely, there is a natural diagonal action of $\Gamma$ on the product $M_{1,0} \times \widecheck{M}_{n,d}$ whose resulting quotient is 
\[
M_{n,d} = ( M_{1,0} \times \widecheck{M}_{n,d})\big{/}\Gamma.
\]
This, combined with the K\"unneth decomposition, yields an isomorphism of cohomology as graded $\BQ$-algebras
\begin{equation}\label{coh_GL/PGL}
    H^*(M_{n,d}, \BQ) = H^*(M_{1,0}, \BQ) \otimes H^*(\widehat{M}_{n,d}, \BQ).
\end{equation}
We note that (\ref{coh_GL/PGL}) is exactly the isomorphism (\ref{coh_thm0.3}) in Theorem \ref{thm0.3}. It remains to justify that the cohomology $H^*(\widehat{M}_{n,d}, \BQ)$ is bigraded and $d$-independent. Both properties rely on the tautological generators.

\subsection{Tautological classes}\label{sec1.2}

There are two ways of constructing tautological classes, via either the Dolbeault moduli space \cite{Markman} or the Betti moduli space \cite{Shende}. We need both for our purpose. The match of these two constructions via the non-abelian Hodge theory was explained in \cite{HT1}.

We first review the construction via the Dolbeault moduli space. Let $p_C$ and $p_M$ be the projections from $C\times M_{n,d}$ to $C$ and $M_{n,d}$ respectively.
We view $M_{n,d}$ to be the Dolbeault moduli space, and let $\CU_d$ be a universal bundle over $C\times M^{\mathrm{Dol}}_{n,d}$ whose existence is guaranteed by $(n,d)=1$. The bundle $\CU_d$ is not unique. For each class 
\[
\alpha=p_C^*\alpha_C+p_M^*\alpha_M\in H^2(C\times M_{n,d},\BQ),
\]
we consider the twisted Chern character
\[\mathrm{ch}^{\alpha}(\mathcal{U}_d):=\mathrm{ch}(\mathcal{U}_d)\cup \mathrm{exp}(\alpha) \in H^*(C\times M_{n,d}, \BQ).\]
We choose $\alpha$ such that,
given any points $c_0\in C$ and $m_0 \in M_{n,d}$, we have that 
\begin{equation}\label{eqn: alpha}
    \mathrm{ch}_1^{\alpha}(\CU_d)|_{c_0\times M_{n,d}}=0\in H^2(M_{n,d},\BQ), \text{\quad and\quad } \mathrm{ch}^{\alpha}_1(\CU_d)|_{C\times m_0}=0\in H^2(C,\BQ).
\end{equation}
Equivalently, via K\"unneth decomposition
\[
\mathrm{ch}_1^{\alpha}(\CU_d) \in H^1(C, \BQ) \otimes H^1(M_{n,d}, \BQ).
\]
Define the tautological classes
\[c_i(\gamma):=\int_{\gamma}\mathrm{ch}^{\alpha}_i(\CU_d)=p_{M*}(p_C^*\gamma\cup \mathrm{ch}_i^{\alpha}(\CU_d)), \quad \gamma \in H^*(C, \BQ).
\]
The classes $c_i(\gamma)$ are independent of the choice of $\CU_d$.


\begin{thm}[\cite{Markman}]\label{thm1.1}
    The tautological classes
    \[
    c_i(\mathbf{1}_C) \in H^{2i-2}(M_{n,d}, \BQ), \quad c_i(\delta_j)\in H^{2i-1}(M_{n,d}, \BQ), \quad c_i(\mathrm{pt}_C) \in  H^{2i}(M_{n,d}, \BQ)
    \]
    with 
    \[
    i=1,\cdots, n, \quad j=1,2,\cdots, 2g
    \]
    generate $H^*(M_{n,d}, \BQ)$ as a graded $\BQ$-algebra.
\end{thm}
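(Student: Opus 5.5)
Theorem \ref{thm1.1} is Markman's generation result, and I would follow his strategy, which is an adaptation of Beauville's argument for moduli of stable bundles. The plan is to realize the class of the diagonal of $M_{n,d}\times M_{n,d}$ through the Chern classes of a complex built from the universal bundle. Since the diagonal class determines the identity correspondence, its Künneth decomposition will then express every cohomology class in terms of tautological classes.

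\textbf{Step 1: the complex.} Let $\pi_{12}$ be the projection from $C\times M_{n,d}\times M_{n,d}$ to $M_{n,d}\times M_{n,d}$. For two Higgs bundles $(E,\theta)$ and $(F,\phi)$, consider the two-term complex of sheaves on $C$
\[
\mathcal{H}om(E,F)\longrightarrow \mathcal{H}om(E,F)\otimes\omega_C, \qquad f\mapsto \phi f - f\theta .
\]
Performing this construction in families with $E=\CU_d$ on the first factor and $F=\CU_d$ on the second, and applying $R\pi_{12*}$, gives a complex $\mathcal{K}$ on $M_{n,d}\times M_{n,d}$.

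\textbf{Step 2: cohomology of the complex.} By stability, $\mathbb{H}^0$ vanishes unless the two Higgs bundles are isomorphic, and then it is $\BC$. By Serre duality for the Higgs complex, $\mathbb{H}^2$ is dual to $\mathbb{H}^0$, so it is supported on the diagonal $\Delta$ with the same description. On the diagonal, $\mathbb{H}^1$ is the tangent space, of dimension $2\dim$. The Euler characteristic of the complex is $-2n^2(g-1)$. Hence the rank of $\mathbb{H}^1$ off the diagonal equals $\dim M_{n,d}-2g$, up to the abelian-part correction; everything here is constant off $\Delta$.

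\textbf{Step 3: the diagonal as a Chern class.} Beauville's trick would write the diagonal class as a top Chern class of a bundle with a section vanishing exactly on $\Delta$. The obstacle is that $M_{n,d}$ is noncompact and holomorphic symplectic, so the naive construction fails. Markman handles this with a $\BC^*$-equivariant version. The $\BC^*$-action scaling the Higgs field gives the Higgs complex an equivariant weight: the $\omega_C$-term carries weight $1$. One then uses the equivariant Chern class $c_{\mathrm{top}}$ of a modified version of $\mathcal{K}$ (equivalently, $\mathcal{E}xt^1_{\pi_{12}}$ twisted by weight), localizing to a compact core. Concretely, I would show
\[
[\Delta] = c_{m}\!\left(-R\pi_{12*}\,\mathcal{H}om(\CU_d,\CU_d)\otimes(\text{weight-twisted }\omega_C\text{ term})\right),
\]
with $m=\dim M_{n,d}$. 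This requires checking that the resulting Chern class is well-defined in ordinary cohomology and equals the diagonal class. The key input is that $M_{n,d}$ retracts onto a compact subvariety, the nilpotent cone, so that classes of middle dimension with proper support pair nondegenerately.

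\textbf{Step 4: Künneth and Grothendieck--Riemann--Roch.} By Grothendieck--Riemann--Roch, the Chern character of $\mathcal{K}$ is $\pi_{12*}$ of $\mathrm{ch}(\CU_d^\vee)\,\mathrm{ch}(\CU_d)\,\mathrm{td}(C)$ times the weight factor. Expanding via the Künneth decomposition of $H^*(C)$ in the basis $\mathbf{1}_C,\delta_j,\mathrm{pt}_C$ writes $[\Delta]$ as a polynomial in the classes $p_1^*c_i(\gamma)$ and $p_2^*c_i(\gamma)$. The twist $\alpha$ cancels in $\CU_d^\vee\otimes\CU_d$. Finally, for any $x\in H^*(M_{n,d})$, the identity $x=p_{1*}(p_2^*x\cdot[\Delta])$ requires a properness or compact-support substitute: I would pair against Borel--Moore homology, using Poincaré duality between $H^*$ and $H^*_c$ together with the retraction onto the nilpotent cone. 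This expresses $x$ as a combination of polynomials in the $c_i(\gamma)$ with coefficients given by pairings, which proves generation.

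The main obstacle is Step 3 combined with the noncompactness in Step 4: proving that the relevant Chern class represents the diagonal and that the correspondence argument works without properness.
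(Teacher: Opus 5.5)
\textbf{Gap 1: the reduction to $i\le n$ is missing.} Your route differs from the paper's, and it omits the one step the paper actually proves. The paper uses Markman's theorem as a black box: $H^*(M_{n,d},\BQ)$ is generated by the $c_i(\gamma)$ for \emph{all} $i\geq 1$. It then cuts down to $i\leq n$. In Chern roots, $\mathrm{ch}^{\alpha}(\CU_d)=\sum_{j=1}^n e^{x_j+\alpha}$, so Newton's identities express $\mathrm{ch}^{\alpha}_k(\CU_d)$ for $k>n$ as a polynomial in $\mathrm{ch}^{\alpha}_1(\CU_d),\dots,\mathrm{ch}^{\alpha}_n(\CU_d)$. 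The K\"unneth components of each $\mathrm{ch}^{\alpha}_i(\CU_d)$ with respect to $\mathbf{1}_C,\delta_j,\mathrm{pt}_C$ are, up to sign, the $c_i(\gamma)$. Taking components of that polynomial therefore writes every $c_k(\gamma)$ with $k>n$ as a polynomial in the $c_i(\gamma')$ with $i\leq n$. Your Step 4 never does this, and you need it. The Grothendieck--Riemann--Roch expansion of $\mathrm{ch}(\CU_d^\vee)\mathrm{ch}(\CU_d)\mathrm{td}(C)$ that feeds into $c_m$, with $m=\dim M_{n,d}$, involves $\mathrm{ch}_k(\CU_d)$ for $k$ far beyond $n$. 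So even a complete version of your argument would only give generation by all the $c_k(\gamma)$, not by the finite list in the statement.

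\textbf{Gap 2: Steps 3--4 do not reprove Markman's theorem, and the noncompactness fix fails.} Step 3 describes what you would like rather than giving a formula. The numerics in Step 2 are also off: off the diagonal, $\dim\mathbb{H}^1=2n^2(g-1)=\dim M_{n,d}-2$, not $\dim M_{n,d}-2g$. So you need $c_{\dim M}$ of a class of virtual rank $\dim M-2$, an excess situation that requires a real argument. In Step 4, once the diagonal is replaced by the K\"unneth expansion $\sum_i a_i\otimes b_i$ of an \emph{ordinary} cohomology class, the properness of $\Delta$ over each factor is lost. The expression $\sum_i \pm a_i\int_M x\,b_i$ is then only defined for $x$ in the image of $H^*_c(M_{n,d},\BQ)\to H^*(M_{n,d},\BQ)$. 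Dually, pairing against $y\in H^*_c$ only shows that a $y$ annihilating all tautological classes lies in the kernel of $H^*_c\to H^*$.

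This is fatal for the argument. $M_{n,d}$ retracts onto the compact nilpotent cone, of complex dimension $\tfrac12\dim_{\BC}M_{n,d}$. Hence $H^k(M_{n,d},\BQ)=0$ for $k>\dim_{\BC}M_{n,d}$, and $H^k_c(M_{n,d},\BQ)\cong H_{2\dim_{\BC}M_{n,d}-k}=0$ for $k<\dim_{\BC}M_{n,d}$. That map is therefore zero outside the middle degree, and your argument proves nothing in any other degree. Transferring a diagonal argument to the noncompact $M_{n,d}$ needs a genuinely different device, for instance a compact space carrying a universal family on which the argument runs and whose cohomology surjects onto $H^*(M_{n,d},\BQ)$. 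That is exactly what the paper imports by citing Markman. The fix for your write-up is to do the same, and then add the rank-$n$ reduction above.
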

    
\begin{proof}
Markman \cite{Markman} proved that $c_i(\gamma)$ with all $i\in \BZ_{\geq 1}$ and $\gamma \in H^*(C, \BQ)$ generate $H^*(M_{n,d}, \BQ)$ as a graded $\BQ$-algebra. Now since $\CU_d$ is a vector bundle of rank $n$, any class $\mathrm{ch}_k(\CU_d)$ can be expressed in terms of $\mathrm{ch}_i(\CU_d)$ with $i\leq n$. We conclude that the classes $c_i(\gamma)$ with $i\leq n$ are sufficient as generators. 
\end{proof}

We now present the tautological classes in terms of the isomorphism (\ref{coh_GL/PGL}). Due to the normalization condition, the classes 
\[
c_1(\delta_j), \quad j=1,2,\cdots, 2g
\]
span 
\[
H^1(M_{1,0}, \BQ) \otimes H^0(\widehat{M}_{n,d}, \BQ) = H^1(M_{n,d}, \BQ).
\]
The K\"unneth decomposition yields a natural projection 
\[
H^*(M_{n,d}, \BQ) \twoheadrightarrow H^0(M_{1,0}, \BQ) \otimes H^*(\widehat{M}_{n,d}, \BQ) = H^*(\widehat{M}_{n,d}, \BQ).
\]
We define the classes 
\begin{equation}\label{PGL_taut}
\alpha_i \in H^{2i-2}(\widehat{M}_{n,d}, \BQ), \quad \beta_i \in H^{2i}(\widehat{M}_{n,d}, \BQ), \quad \psi_{i,j} \in H^{2i-1}(\widehat{M}_{n,d}, \BQ), \quad 2\leq i\leq n, ~~1\leq j\leq 2g, 
\end{equation}
to be the images of the classes
\[
c_i(\mathbf{1}_C) \in H^{2i-2}(M_{n,d}, \BQ), \quad c_i(\mathrm{pt}_C) \in H^{2i}(M_{n,d}, \BQ), \quad c_i(\delta_j) \in H^{2i-1}(M_{n,d}, \BQ)
\]
under the projection. As a corollary of Theorem \ref{thm1.1}, we obtain that the classes (\ref{PGL_taut}) generate $H^*(\widehat{M}_{n,d}, \BQ)$ as a graded $\BQ$-algebra. We note that the classes (\ref{PGL_taut}) can also be constructed on $\widehat{M}_{n,d}$ directly using the universal projective bundle over $C\times \widehat{M}_{n,d}$; see \cite[Section 2.1]{MS_PW}. Here we consider the generators in the cohomology $H^*(M_{n,d}, \BQ)$ first because it is more natural to work with the integral structure there.

As we mentioned earlier, we can also use the Betti moduli space to construct the tautological classes (\ref{PGL_taut}); this approach was used in \cite{Shende} to calculate the weights of these classes with respect to the mixed Hodge structure on $\widehat{M}^B_{n,d}$.

Let $\Delta_C$ be the simplicial set with geometric realization homotopic to the curve $C$. It admits the same rational cohomology as $C$,
\[
H^*(\Delta_C, \BQ)  = H^*(C, \BQ),
\]
as graded $\BQ$-vector spaces, but the Hodge structure is trivial for $\Delta_C$. Shende observed that there is a natural evaluation morphism
\[
\mathrm{ev}: \Delta_C \times \widehat{M}^B_{n,d} \to B\mathrm{PGL}_n
\]
whose pullback 
\[
\mathrm{ev}^*: H^*(B\mathrm{PGL}_n, \BQ) \to H^*(\Delta_C, \BQ) \otimes H^*(\widehat{M}^B_{n,d}, \BQ)
\]
is a morphism of Hodge structures. Recall that the cohomology $H^*(B\mathrm{PGL}_n, 
\BQ)$ is generated by the tautological classes 
\[
\mathrm{ch}_i \in H^{2i}(B\mathrm{GL}_n, \BQ)
\]
given by the Chern characters of the universal principal bundle. The classes (\ref{PGL_taut}) are then characterized by
\[
 \mathrm{ev}^*\mathrm{ch}_i = \mathrm{pt}_C \otimes \alpha_i + \mathbf{1}_C \otimes \beta_i + \sum_{j=1}^{2g} \delta_j \otimes \psi_{i,j}.
\]

\begin{thm}[\cite{Shende}]\label{thm1.2}
    There is a unique multiplicative bigraded decomposition
    \[
    H^*(\widehat{M}_{n,d}, \BQ) = \bigoplus_{i,k} H^k_{(i)}(\widehat{M}_{n,d}, \BQ)
    \]
    satisfying that 
    \[
    \alpha_i \in H_{(i)}^{2i-2}(\widehat{M}_{n,d}, \BQ), \quad \beta_i \in H_{(i)}^{2i}(\widehat{M}_{n,d}, \BQ), \quad \psi_{i,j} \in H_{(i)}^{2i-1}(\widehat{M}_{n,d}, \BQ)
    \]
\end{thm}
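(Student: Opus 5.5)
The plan is to build the decomposition from the weight filtration on the mixed Hodge structure of $H^*(\widehat{M}^B_{n,d},\BQ)$, using Shende's evaluation morphism $\mathrm{ev}$ to pin down the weights of the generators.

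\emph{Weights of the generators.} The simplicial set $\Delta_C$ has trivial Hodge structure, so $H^*(\Delta_C,\BQ)$ is pure of weight $0$ in every degree. The class $\mathrm{ch}_i \in H^{2i}(B\mathrm{GL}_n,\BQ)$ is pure of type $(i,i)$, i.e.\ of weight $2i$. Since $\mathrm{ev}^*$ is a morphism of mixed Hodge structures, each Künneth component is of type $(i,i)$. Hence $\alpha_i,\beta_i,\psi_{i,j}$ all lie in the pure $(i,i)$ part, i.e.\ in $W_{2i}\cap F^i$ and of type $(i,i)$ in the Deligne splitting.

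\emph{Definition of the grading.} Let $H^k=\bigoplus_{p,q}I^{p,q}$ be Deligne's canonical splitting of the mixed Hodge structure. It is functorial and multiplicative for cup product, because cup product is a morphism of mixed Hodge structures. Set
\[
H^k_{(i)} := I^{i,i}\cap H^k.
\]
By the first step, each generator lies in the stated piece. The key point is that the generators have type $(i,i)$ in the splitting, not merely weight $\le 2i$; this holds because $\mathrm{ev}^*$ is strictly compatible with the splitting, and the source classes are of pure Hodge--Tate type $(i,i)$.

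\emph{Exhaustion.} By Theorem \ref{thm1.1} and its corollary, the classes (\ref{PGL_taut}) generate $H^*(\widehat{M}_{n,d},\BQ)$. Every monomial in them lies in some $I^{i,i}$, because $I^{a,a}\cdot I^{b,b}\subset I^{a+b,a+b}$. Therefore
\[
H^*=\sum_i H^*_{(i)}.
\]
This sum is direct because the $I^{p,q}$ are independent. The resulting bigrading is multiplicative and is defined over $\BQ$: on a Hodge--Tate structure the splitting $W_{2i}\cap F^i$ is defined over $\BQ$, and the structure here is Hodge--Tate since it is generated by Tate classes.

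\emph{Uniqueness.} This is immediate. A multiplicative decomposition is determined by the grades assigned to a generating set: any monomial in the generators must sit in the sum of the grades of its factors. The rational span of all such monomials is the whole algebra, which forces each $H_{(i)}$ to equal the span of the monomials of total grade $i$. Well-definedness of this span as a direct sum is exactly what the existence step provides.

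\emph{Main obstacle.} The one substantive step is verifying that $\mathrm{ev}$ is a morphism of mixed Hodge structures at all, since $\Delta_C\times\widehat{M}^B$ is a simplicial variety and the evaluation involves $B\mathrm{PGL}_n$. I would realize $\mathrm{ev}$ as a map of simplicial algebraic varieties, pulling back the universal $\mathrm{PGL}_n$-torsor on the representation variety. I would then invoke Deligne's theory for simplicial schemes, where $B\mathrm{PGL}_n$ is modeled by its algebraic bar construction with Tate cohomology. Rationality of the grading via the Hodge--Tate property is the only other point needing care.
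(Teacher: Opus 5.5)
Your proposal is correct and follows the paper's route, which is Shende's argument: $\mathrm{ev}^*$ places every tautological class in type $(i,i)$, the decomposition is the $(i,i)$-part $I^{i,i}=W_{2i}\cap F^i$ of the resulting Hodge--Tate structure, and uniqueness follows from tautological generation. One correction: a Hodge--Tate structure need not split over $\BQ$ in general (nonsplit Kummer-type extensions exist), but you do not need that claim, because your exhaustion step already shows that $H^*(\widehat{M}_{n,d},\BQ)$ is spanned by rational monomials lying in the $I^{i,i}$, and this gives the rational splitting directly.
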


\begin{proof}
    Shende \cite{Shende} proved that the cohomology of $\widehat{M}_{n,d}$ is of Hodge--Tate type; furthermore, the Hodge filtration splits the weight filtration and the decomposition given by
    \[
    H_{(i)}^*(\widehat{M}_{n,d}, \BQ) := \left(W_{2i} \cap F^i\right) H^*(\widehat{M}^B_{n,d}, \BQ)
    \]
    satisfies the desired properties. The uniqueness follows from the fact that the tautological classes (\ref{PGL_taut}) generate the cohomology.
\end{proof}

We define the \emph{Chern grading} to be the one given by the second grading of Theorem \ref{thm1.2}; under this grading, any K\"unneth component of $\mathrm{ev}^*\mathrm{ch}_i$ has degree $i$. The $P=W$ conjecture \cite{dCHM1, MS_PW, HMMS} was proven by showing that 
\[
P_k H^*(\widehat{M}_{n,d}, \BQ) = \bigoplus_{i\leq k} H_{(i)}^*(\widehat{M}_{n,d}, \BQ)
\]
where $P_\bullet$ is the perverse filtration associated with the Hitchin system given by $\widehat{M}^{\mathrm{Dol}}_{n,d}$. In other words, the Chern grading splits either the perverse filtration for $\widehat{M}^\mathrm{Dol}_{n,d}$ or the weight filtration for $\widehat{M}^B_{n,d}$.

\subsection{Proofs of Theorem \ref{thm0.3}}\label{pf_of_thm_0.3}

In view of Theorem \ref{thm1.2}, it suffices to show that, for $d,d'$ coprime to $n$, there is an isomorphism of graded $\BK$-algebras
\[
\varphi_{d,d'}: H^*(\widehat{M}_{n,d}, \BK) \xrightarrow{\simeq} H^*(\widehat{M}_{n,d'}, \BK) 
\]
preserving the tautological classes (\ref{PGL_taut}):
\[
\varphi_{d,d'}(\alpha_i)= \alpha_i, \quad \varphi_{d,d'}(\beta_i)= \beta_i, \quad \varphi_{d,d'}(\psi_{i,j})= \psi_{i,j}.
\]
Here $\BK$ can be any field which contains $\BQ$ as a subfield. Indeed, since all the tautological classes are rational, such an isomorphism would automatically preserve the rational cohomology rings.

This statement is an immediate consequence of \cite[Theorem 1.1(b)]{dCMSZ}. Furthermore, Theorem \ref{thm1.2} also follows from \cite[Theorem 1.1(a)]{dCMSZ} where the Chern grading is characterized by the weight decomposition associated with the action of the center $\BC^* \subset \mathrm{GSp}_{2g}(\BQ)$. The approach of \cite{dCMSZ} relies on the algebraic geometry of the Dolbeault moduli spaces $\widehat{M}^{\mathrm{Dol}}_{n,d}$ in positive characteristic \cite{OV, Groch, Chen-Zhu}.

In the following, for the reader's convenience, we provide another proof of Theorem \ref{thm0.3} which relies on the algebraic geometry of the Betti moduli spaces; the idea was sketched in \cite[Remark 4.8]{Survey} based on the fact that $\widehat{M}^B_{n,d}$ and $\widehat{M}^B_{n,d'}$ are Galois conjugate varieties. Since we need to use $\ell$-adic cohomology, we fix from now on a prime $\ell$.

Recall the standard $n$-th root of unity $\xi_n$. 
Let $\sigma \in \mathrm{Gal}(\overline{\BQ}/\BQ)$ be an element satisfying
\[
\sigma(\xi^d_n) = \xi_n^{d'}.
\]
By definition, the character varieties $\widehat{M}^B_{n,d}$ and $\widehat{M}^B_{n,d'}$ are Galois conjugate with respect to $\sigma$,
\[
 \widehat{M}^B_{n,d'} =  {^\sigma}{\widehat{M}^B_{n,d}}.
\]
Therefore, the element $\sigma$ induces an isomorphism of $\ell$-adic cohomology rings:
\begin{equation}\label{Gal-coh}
 \sigma:   H^*(\widehat{M}^B_{n,d}, \BQ_\ell) \xrightarrow{\simeq} H^*(\widehat{M}^B_{n,d'}, \BQ_\ell).
\end{equation}
The rational tautological classes (\ref{PGL_taut}) are natural classes in the $\ell$-adic cohomology via the canonical embedding $\BQ \subset \BQ_\ell$. We now analyze how (\ref{Gal-coh}) interacts with the tautological classes.

By the discussion before Theorem \ref{thm1.2}, the tautological classes (\ref{PGL_taut}) on the character variety are obtained by pulling back the tautological classes on $B\mathrm{PGL}_n$ and taking slant product. It suffices to analyze the action of the Galois action of $\sigma$ on the $\ell$-adic cohomology of $B\mathrm{PGL}_n$.

As all the tautological classes on $B\mathrm{PGL}_n$ lie in the image of the cycle class map, we have 
\[
\bigoplus_{i} H^{2i}(B\mathrm{PGL}_n, \BQ_\ell(i))  = \BQ_\ell[1, \mathrm{ch}'_2, \mathrm{ch}'_3,\cdots, \mathrm{ch}'_n], \quad \mathrm{ch}'_i \in H^{2i}(B \mathrm{PGL}_n, \BQ_\ell(i)).
\]
Here $\BQ_\ell(i)$ is the Tate-twist, and the class $\mathrm{ch}'_i$ is the image of the corresponding class in the rational Chow group via the cycle class map. In particular, each class $\mathrm{ch}'_i$ is Galois invariant,
\[
\sigma(\mathrm{ch}'_i) = \mathrm{ch}'_i.
\]
We twist $\mathrm{ch}'_i$ by a basis $t_\ell$ of $\BQ_\ell(1)$:
\[
\mathrm{ch}_i: = t^{-i}_\ell \cdot \mathrm{ch}'_i \in H^{2i}(B \mathrm{PGL}_n, \BQ_\ell).
\]
The Galois element $\sigma$ on the \emph{untwisted} $\ell$-adic cohomology is 
\[
\sigma: H^*(B\mathrm{PGL}_n, \BQ_\ell) \xrightarrow{\simeq}H^*(B\mathrm{PGL}_n, \BQ_\ell), \quad \mathrm{ch}_i \mapsto \chi^{-i}_{\ell}(\sigma) \cdot \mathrm{ch}_i
\]
with $\chi_\ell$ the $\ell$-adic cyclotomic character. 


Finally, we consider the Galois action on the character varieties (\ref{Gal-coh}). The discussion above for $B\mathrm{PGL}_n$ has already shown that the Galois action (\ref{Gal-coh}) satisfies 
\[
\alpha_i \mapsto \chi^{-i} \alpha_i, \quad \beta_i \mapsto \chi^{-i} \beta_i, \quad \psi_{i,j} \mapsto \chi^{-i}\psi_{i,j}
\]
with $\chi \in \BQ^*_\ell$ a non-zero constant. On the other hand, by extending the bigraded decomposition of Theorem \ref{thm1.2} to $\BQ_\ell$-coefficients, there is an automorphism of $\BQ_\ell$-algebras
\begin{equation}\label{aut_thm1.2}
H^*(\widehat{M}_{n,d'}, \BQ_\ell) \xrightarrow{\simeq}H^*(\widehat{M}_{n,d'}, \BQ_\ell)
\end{equation}
which sends $\gamma \in H_{(i)}^*(\widehat{M}_{n,d'}, \BQ_\ell)$ to $\chi^i \gamma$. The desired isomorphism $\varphi_{d,d'}$ can be constructed as the composition of the Galois action (\ref{Gal-coh}) and the automorphism (\ref{aut_thm1.2}) with $\BK = \BQ_\ell$. \qed

From now on, we define the graded $\BQ$-algebra
\[
R_{g,n}: = (H^*(\widehat{M}^B_{n,d}, \BQ), ~~ \cup);
\]
the right-hand side is $d$-independent by the canonical isomorphisms $\varphi_{d,d'}$. This $\BQ$-algebra is bigraded
\[
R_{g,n} = \bigoplus_{k,i} R^k_{g,n,(i)}, \quad R^k_{g,n,(i)} = H^k_{(i)}(\widehat{M}_{n,d}, \BQ),
\]
and is generated by the tautological classes (\ref{PGL_taut}) with Chern grading given by (\ref{Chern_grading}); the ideal of relations is homogeneous with respect to the Chern grading and is $d$-independent.

\subsection{Homotopy groups}\label{Homotopy}

We conclude this section by showing that homotopy groups are not sufficient to distinguish Galois conjugate character varieties either.

\begin{prop}
For any $k>1$ and $d,d'$ coprime to $n$, we have 
\[
\pi_k(M_{n,d}) \simeq \pi_k(M_{n,d'}).
\]
\end{prop}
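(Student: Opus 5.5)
Since $k>1$, higher homotopy groups are unchanged under finite covers, so I will compare suitable finite covers of $M_{n,d}$ and $M_{n,d'}$ rather than the spaces themselves. Section \ref{sec1.1} gives
\[
M_{n,d} = (M_{1,0}\times \widecheck{M}_{n,d})/\Gamma,
\]
where $\Gamma=(\BZ/n\BZ)^{2g}$ acts freely. Indeed, it acts on $M_{1,0}=(\BC^*)^{2g}$ by translation by $n$-torsion points, which is free. Hence
\[
M_{1,0}\times \widecheck{M}_{n,d}\to M_{n,d}
\]
is a finite covering map, and for $k\ge 2$ we get
\[
\pi_k(M_{n,d})\simeq \pi_k(M_{1,0}\times\widecheck{M}_{n,d})\simeq \pi_k((\BC^*)^{2g})\times \pi_k(\widecheck{M}_{n,d}) = \pi_k(\widecheck{M}_{n,d}),
\]
because $(\BC^*)^{2g}$ is a $K(\BZ^{2g},1)$. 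This reduces the statement to showing $\pi_k(\widecheck{M}_{n,d})\simeq\pi_k(\widecheck{M}_{n,d'})$ for $k\ge 2$.

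To compare the $\mathrm{SL}_n$ spaces I will use their Galois-invariant finite covering by the $\mathrm{PGL}_n$ quotient. The group $\Gamma$ acts on $\widecheck{M}_{n,d}$ with quotient $\widehat{M}_{n,d}$, but not freely, so $\widecheck{M}_{n,d}$ is not a cover of $\widehat{M}_{n,d}$ and I need a different route. The plan is to show that $\widecheck{M}_{n,d}$ is simply connected and then compare it with $\widecheck{M}_{n,d'}$ through their common Galois-conjugate structure. I expect simple connectivity to follow from \cite{BGPG}: combining $\pi_1(M_{n,d})\simeq\BZ^{2g}$ with the covering above forces $\pi_1(\widecheck{M}_{n,d})=0$. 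Given simple connectivity, the étale homotopy type of $\widecheck{M}^B_{n,d}$ is defined over $\BQ(\xi_n)$, and Galois conjugation gives an isomorphism of profinite completions
\[
\widehat{\pi}_k(\widecheck{M}_{n,d})\simeq \widehat{\pi}_k(\widecheck{M}_{n,d'}).
\]
If the groups $\pi_k$ are finitely generated abelian, these determine $\pi_k$ only up to profinite completion, which loses the free rank. The rank can be recovered rationally, since rational homotopy is computed from the minimal model, and Theorem \ref{thm0.3} shows the rational cohomology rings agree. For a simply connected space, though, the minimal model also needs formality or some further input. Together, profinite completion plus rank would pin down $\pi_k$ as a finitely generated abelian group.

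The cleanest route avoids all this if a direct map exists. By \cite{HN} there is no algebraic isomorphism, so I would instead try the following. Multiplication by a unit $u$ with $d'\equiv ud \pmod n$ on the central element does not obviously induce a map between the representation spaces, so that attempt looks blocked as well. I would therefore commit to the Galois/étale argument above, with the rational comparison from Theorem \ref{thm0.3}. The main obstacle is making the profinite-plus-rational comparison rigorous: it needs finite generation of the $\pi_k$, which holds for simply connected finite CW complexes (and these varieties have that homotopy type), and it needs rational homotopy groups to agree. For the latter I would invoke formality of these spaces, which follows because the weight filtration is split by the Chern grading (purity-type argument, Theorem \ref{thm1.2}), so that the rational homotopy type is determined by the rational cohomology ring.
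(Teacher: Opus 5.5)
Your skeleton is the same as the paper's. You pass to the free $\Gamma$-cover $M_{1,0}\times\widecheck{M}_{n,d}\to M_{n,d}$ to get $\pi_k(M_{n,d})\simeq\pi_k(\widecheck{M}_{n,d})$ for $k\ge 2$, show that $\widecheck{M}_{n,d}$ is simply connected, and use Galois invariance of the profinite completion for simply connected varieties \cite{Sull}.

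Your route to $\pi_1(\widecheck{M}_{n,d})=0$ differs from the paper's, which goes through the proof of \cite[Theorem 4.1]{BGPG} and \cite{DU}, and it is correct. The group $\BZ^{2g}\times\pi_1(\widecheck{M}_{n,d})$ has finite index in $\pi_1(M_{n,d})\simeq\BZ^{2g}$, so it is free abelian of rank exactly $2g$, which forces $\pi_1(\widecheck{M}_{n,d})=0$. This needs $(g,n)\neq(2,2)$ for \cite{BGPG}, which is harmless because the cases $n\le 4$ are trivial.

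The gap is in the final step, and it comes from a false premise. Profinite completion does not lose the free rank. If $A\simeq\BZ^r\oplus T$ with $T$ finite, then $\widehat{A}\simeq\widehat{\BZ}^r\oplus T$. Its torsion subgroup is $T$, and $r=\dim_{\mathbb{F}_p}\widehat{A}/p\widehat{A}$ for any prime $p\nmid |T|$. So a finitely generated abelian group is determined up to isomorphism by its profinite completion. The finite generation you already noted (simply connected, finite CW type) then finishes the proof immediately; this is exactly the paper's last line.

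The rational detour you put in its place does not work as written, for two reasons. First, Theorems \ref{thm0.3} and \ref{thm1.2} concern $M_{n,d}$ and $\widehat{M}_{n,d}$, so they only control the invariant part $H^*(\widecheck{M}_{n,d},\BQ)^\Gamma$, not the full cohomology ring of $\widecheck{M}_{n,d}$. Second, a multiplicative splitting of the weight filtration by the Chern grading is not a formality criterion. Formality results of Deligne--Griffiths--Morgan--Sullivan type require purity, meaning weights proportional to cohomological degree. That fails here, since $\alpha_i\in H^{2i-2}$ and $\beta_i\in H^{2i}$ both have weight $2i$.

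Delete the formality paragraph and replace it with the elementary algebraic fact above.
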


\begin{proof}
Since the statement is trivial when $n\leq 4$, we assume that $n\geq 5$. Recall the topological description of $M_{n,d}$ from Section \ref{sec1.1}:
\[
M_{n,d} = ( M_{1,0} \times \widecheck{M}_{n,d})\big{/}\Gamma,
\]
where the diagonal action of $\Gamma$ on $ M_{1,0} \times \widecheck{M}_{n,d}$ is free. By the proof of \cite[Theorem 4.1]{BGPG}, the fundamental group of $\widecheck{M}_{n,d}$ coincides with the fundamental group of the moduli space of stable vector bundles with fixed determinant of degree $d$; by \cite[Theorem 3.2(i)]{DU} the latter is trivial. We conclude that $\widecheck{M}_{n,d}$ is simply-connected. Moreover, since $M_{1,0}$ is homeomorphic to $\BR^{2g} \times (S^1)^{2g}$ and $\Gamma$ is a finite group, the universal covering of $M_{n,d}$ is 
\[
\BR^{4g} \times \widecheck{M}_{n,d}. 
\]
Therefore, we have
\[
\pi_k(M_{n,d}) \simeq \pi_k( \widecheck{M}_{n,d}), \quad k \geq 2.
\]
The right-hand side has the homotopy type of a finite CW-complex, whose higher homotopy groups are finitely generated abelian groups. In particular, the group $\pi_k(\widecheck{M}_{n,d})$ is completely determined by its pro-finite completion, which is invariant for Galois conjugate simply-connected varieties $\widecheck{M}^B_{n,d}$ and $\widecheck{M}^B_{n,d'}$; see \cite{Sull}. Hence
\[
\pi_k(M_{n,d}) \simeq \pi_k( \widecheck{M}_{n,d}) \simeq \pi_k( \widecheck{M}_{n,d'}) \simeq \pi_k({M}_{n,d'}), \quad k \geq 2.
\]
The proof is complete.
\end{proof}

\section{Integral cohomology and proof of Theorem \ref{thm0.5}}

The purpose of this section is to prove Theorem \ref{thm0.5}. We assume in this section that $n\geq 5$, as Theorem \ref{thm0.5} (or more precisely, Conjecture \ref{main_conj}) is trivial for $n\leq 4$. We fix a curve $C$ of genus $g\geq 2$. Let $d,d'$ be two integers coprime to $n$ which do not satisfy (\ref{condition}). We prove by contradiction; assume that there is an isomorphism of graded $\BQ$-algebras
\[
\Phi_{d,d'}: H^*(M_{n,d}, \BQ) \xrightarrow{\simeq} H^*(M_{n,d'}, \BQ)
\]
which preserves the integral structures
\[
\Phi_{d,d'}: H^*(M_{n,d}, \BZ) \xrightarrow{\simeq} H^*(M_{n,d'}, \BZ).
\]

\subsection{From $\mathrm{GL}_n$ to $\mathrm{PGL}_n$}\label{sec2.1}

In view of the product (\ref{coh_GL/PGL}), the essential cohomological information of $M_{n,d}$ is encoded in $R_{g,n}$ given by the rational cohomology of the $\mathrm{PGL}_n$ moduli space $\widehat{M}_{n,d}$. On the other hand, it is more convenient to work with the integral structure of the cohomology of $M_{n,d}$ (as the space $\widehat{M}_{n,d}$ is not a manifold but an orbifold).

Our first step is to endow $R_{g,n}$ with a sequence of integral structures given by $M_{n,d}$ with various $d$.


Recall that under the product (\ref{coh_GL/PGL}), the degree 1 cohomology is completely governed by the first factor
\begin{equation}\label{2.1_1}
H^1(M_{n,d}, \BQ) = H^1(M_{1,0}, \BQ) \otimes H^0(\widehat{M}_{n,d}, \BQ). 
\end{equation}
We consider the ideal 
\[
H^*(M_{n,d}, \BQ)^{\langle 1 \rangle} \subset H^*(M_{n,d}, \BQ)
\]
generated by $H^1(M_{n,d}, \BQ)$. By (\ref{2.1_1}) and the fact that $H^*(M_{1,0}, \BQ) = \bigwedge^\bullet H^1(M_{1,0}, \BQ)$, we have a natural isomorphism of graded $\BQ$-algebras
\begin{equation}\label{natural_quotient}
H^*(M_{n,d}, \BQ)/H^*(M_{n,d}, \BQ)^{\langle 1 \rangle}  =  H^*(\widehat{M}_{n,d}, \BQ).
\end{equation}
Using the quotient map
\begin{equation*}
H^*(M_{n,d}, \BQ) \twoheadrightarrow  R_{g,n} = H^*(\widehat{M}_{n,d}, \BQ),
\end{equation*}
the integral structure $H^*(M_{n,d}, \BZ) \subset H^*(M_{n,d}, \BQ)$ induces a lattice
\[
R_{g,n,d,\BZ} \subset R_{g,n}.
\]
We note that, although $R_{g,n}$ is $d$-independent by Theorem \ref{thm0.3}, the lattice $R_{g,n,d,\BZ}$ clearly depends on $d$.

By (\ref{natural_quotient}), the graded isomorphism
\[
\Phi_{d,d'}: H^*(M_{1,0}, \BQ) \otimes R_{g,n} \xrightarrow{\simeq} H^*(M_{1,0}, \BQ) \otimes R_{g,n}
\]
induces a graded automorphism
\[
\widehat{\Phi}_{d,d'} \in G_{g,n} = \mathrm{Aut}^{\mathrm{gr}}(R_{g,n})
\]
preserving the lattices,
\begin{equation}\label{integral}
\widehat{\Phi}_{d,d'}(R_{g,n,d,\BZ}) = R_{g,n,d',\BZ}.
\end{equation}
The purpose of the remainder is to show that, under mild assumptions (\emph{i.e.} weakly diagonal) on $\widehat{\Phi}_{d,d'}$, (\ref{integral}) cannot happen.

\subsection{Integral tautological classes}\label{sec2.2}
The integral cohomology $H^*(M_{n,d}, \BZ)$ is torsion free by \cite[Theorem 6.1]{GS}, and the tautological generators were found by Markman \cite{Markman-integral}. In this section, we need precise information for some low degree integral tautological classes.

We first fix some notation. Recall the cohomology classes for the curve $C$:
\begin{equation}\label{classes_2.2}
\mathbf{1}_C,~~ \mathrm{pt}_C,~~ \delta_i \in H^*(C,\BZ), \quad i=1,2,\cdots, 2g.
\end{equation}
The natural isomorphisms between the topological $K$-theory and the integral cohomology
\[
K_{\mathrm{top}}^0(C) = H^0(C,\BZ) \oplus H^2(C,\BZ), \quad K_{\mathrm{top}}^1(C) = H^1(C, \BZ),
\]
 lift the classes (\ref{classes_2.2}):
\[
[\mathbf{1}_C],~~ [\mathrm{pt}_C] \in K_{\mathrm{top}}^0(C), \quad\quad [\delta_i] \in K_{\mathrm{top}}^1(C), ~~ i=1,2,\cdots, 2g.
\]

We start with recalling the generalized theta divisor in $H^2(M_{n,d}, \BZ)$. Universal bundle $\CU_d$ over $C\times M^{\mathrm{Dol}}_{n,d}$ is not unique; we may normalize it as follows. Consider the integer $\chi: =d+(1-g)n$, which is coprime to $n$. There exist coprime integers $a,b$ with
\begin{equation}\label{a_b}
an+b\chi = 1
\end{equation}
which we fix from now on. We also pick a point $c_0\in C$. We define a new universal bundle $\CU^\circ_d$ from $\CU_d$:
\[
\CU^\circ_d: = \CU_d \otimes p^*_M\CL^{-a}_{c_0} \otimes p_M^*\CL^{ -b}_{\mathrm{det}}.
\]
Here $\CL_{c_0}, \CL_{\mathrm{det}}$ are line bundles on $M_{n,d}$ given by
\[
\CL_{c_0} := \mathrm{det}(\CU_d{|}_{c_0 \times M_{n,d}}), \quad \quad \CL_{\mathrm{det}}:= \mathrm{det}(Rp_{M*} \CU_d).
\]
If we replace $\CU_d \mapsto \CU_d\otimes p_M^*L$ with $L$ any line bundle on $M_{n,d}$, then 
\[
\CL_{c_0} \mapsto \CL_{c_0} \otimes L^n, \quad \CL_{\mathrm{det}} \mapsto \CL_{\mathrm{det}}\otimes L^\chi.
\]
Therefore $\CU_d^\circ \mapsto \CU^\circ_d$; \emph{i.e.}, the bundle $\CU^\circ_d$ is independent of the choice of the universal bundle $\CU_d$ with which we start.

The $K$-theoretic K\"unneth decomposition of the universal bundle $\CU^\circ_d$ reads
\begin{equation}\label{eqn: k-kunneth}
    [\CU^\circ_d]= e_{[\mathbf{1}_C]}\otimes [\mathbf{1}_C]+  e_{[\mathrm{pt}_C]}\otimes [\mathrm{pt}_C] +    \sum_{i=1}^{2g} e_{[\delta_i]} \otimes [\delta_i].
\end{equation}

\begin{thm}[\cite{Markman-integral}] \label{Markman_integral} 
The Chern classes of the K\"unneth factors 
\[
e_{[\#]} \in K_{\mathrm{top}}^*(M_{n,d}), \quad   \#\in \{1_C, \delta_i,\mathrm{pt}_C\}
\]
generate the integral cohomology $H^*(M_{n,d},\mathbb{Z})$ as a graded algebra.
\end{thm}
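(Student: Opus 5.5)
The plan is to follow Markman's diagonal argument, which is an integral refinement of the Beauville/Ellingsrud--Str{\o}mme method. The idea is to express the class of the diagonal $\Delta \subset M_{n,d}\times M_{n,d}$ as the top Chern class of a natural $K$-theory class built from two copies of the normalized universal bundle $\CU^\circ_d$. The generation statement then follows from the K\"unneth decomposition of $[\Delta]$. Throughout I use that $H^*(M_{n,d},\BZ)$ is torsion free \cite[Theorem 6.1]{GS}. This makes the K\"unneth formula over $\BZ$ a clean tensor product and lets me identify classes by their rational images.

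\textbf{Step 1: the diagonal as a top Chern class.} Let $\pi_{12}: C\times M\times M\to M\times M$ be the projection, and let $\CU_1,\CU_2$ be the pullbacks of $\CU^\circ_d$ from the two factors. I consider the complex computing hypercohomology of the Higgs deformation complex
\[
\mathcal{H}om(\CU_1,\CU_2)\to \mathcal{H}om(\CU_1,\CU_2)\otimes\omega_C,
\]
twisted by the Higgs fields. Pushing forward along $\pi_{12}$ and shifting by one gives a complex $E$ on $M\times M$. By stability, the $\mathrm{Hom}$ and top $\mathrm{Ext}$ terms vanish off the diagonal, and they are one-dimensional on it, coming from scalars. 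After removing these trivial pieces, $E$ becomes a vector bundle of rank $\dim M$ whose tautological section vanishes transversally exactly along $\Delta$. Hence $[\Delta]=c_{\dim M}(E)$. Because $M$ is noncompact, I would work with the Borel--Moore/compactly supported formalism. Concretely, I would use the $\BC^*$-scaling action retracting $M$ onto the nilpotent cone, so that $[\Delta]$ defines a class with proper support over one factor and the correspondence below makes sense integrally.

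\textbf{Step 2: expressing $E$ through the K\"unneth factors.} Grothendieck--Riemann--Roch is not available integrally, so instead I compute $[E]$ in topological $K$-theory. By (\ref{eqn: k-kunneth}) we have
\[
[\CU_1]^\vee\otimes[\CU_2]=\sum_{\#,\#'} e_{\#}^\vee\boxtimes e_{\#'}\otimes [\#]^\vee[\#'].
\]
Pushing forward to a point along $C$ is a map $K^*_{\mathrm{top}}(C)\to\BZ$, given by the integer pairings of $[\mathbf{1}_C],[\mathrm{pt}_C],[\delta_i]$ (including the $\omega_C$ term). It therefore yields
\[
[E]=\sum n_{\#,\#'}\, e_\#^\vee\boxtimes e_{\#'}, \qquad n_{\#,\#'}\in\BZ,
\]
up to a trivial correction. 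Chern classes of sums and tensor products of $K$-classes are universal integral polynomials in the Chern classes of the factors. So $c_{\dim M}(E)$ lies in the image of $A\otimes A$, where $A\subset H^*(M,\BZ)$ is the subring generated by the Chern classes of the $e_\#$.

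\textbf{Step 3: conclusion and main obstacle.} For any $x\in H^*(M,\BZ)$ we have
\[
x=\mathrm{pr}_{2*}\big(\mathrm{pr}_1^*x\cup[\Delta]\big).
\]
Writing $[\Delta]=\sum a_k\boxtimes b_k$ with $a_k,b_k\in A$ gives $x=\sum \langle x,a_k\rangle\, b_k$, where the coefficients $\langle x,a_k\rangle$ are integers, being pairings of integral classes. Hence $x\in A$. The main obstacle is Step 1 in the noncompact setting. One must make sure that the transversality/top Chern class identity and the pushforward $\mathrm{pr}_{2*}$ hold integrally with the correct supports. Here I would use that the Ext complex is built from the nilpotent cone side and apply Poincar\'e duality between $H^*$ and $H^*_c$ (both torsion free) to legitimize the pairing.
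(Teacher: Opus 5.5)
The paper gives no proof of this statement. It quotes Markman \cite{Markman-integral}, with torsion-freeness from \cite{GS}, and remarks that the only range it actually uses, $H^{\le 4}$, follows from Atiyah--Bott's integral generators for stable bundles \cite{AB} together with the codimension estimate of \cite{BGPG}. Your outline is the standard diagonal argument, which works for a smooth \emph{compact} moduli space. As written, however, it does not prove the statement for the noncompact $M=M_{n,d}$.

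The decisive gap is Step 3, and it is not a support technicality. Put $D=\dim_\BC M$. Since $M$ retracts onto the nilpotent cone, a projective variety of complex dimension $D/2$, we have $H^i(M,\BZ)=0$ for $i>D$. The torsion-free K\"unneth formula then gives $H^{2D}(M\times M,\BZ)=H^D(M,\BZ)\otimes H^D(M,\BZ)$. So whatever $c_D(E)$ is, you can take all its K\"unneth components $a_k,b_k$ in $H^D(M)$. The formula $x=\sum_k\langle x,a_k\rangle b_k$ then fails for every $x$ of degree $\neq D$: take $x=1$ or $x=\Theta$. The underlying reason is that $x=\mathrm{pr}_{2*}(\mathrm{pr}_1^*x\cup[\Delta])$ needs the class of $\Delta$ with supports proper over $\mathrm{pr}_2$, and this information is lost once you pass to ordinary cohomology. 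Equivalently, the ordinary diagonal class, read as a map $H_*(M)\to H^*(M)$, is the composite $H_*(M)\cong H^{2D-*}_c(M)\to H^{2D-*}(M)$. So it only sees the image of $H^*_c\to H^*$, and $H^k_c(M)\cong H_{2D-k}(M)=0$ for $k<D$. For this reason Poincar\'e duality with $H^*_c$ cannot rescue the argument, and neither can the $\BC^*$-retraction on its own. What you are missing is a genuinely global input: a compact space on which the diagonal argument is legitimate, together with a proof that its integral cohomology restricts \emph{onto} $H^*(M_{n,d},\BZ)$ compatibly with the K\"unneth factors. Examples would be a $\BC^*$-equivariant compactification, or a spectral-curve embedding into a projective moduli space of sheaves on $\BP(\CO_C\oplus\omega_C)$. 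Your proposal does not supply this.

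Step 1 is also incorrect as stated. Off $\Delta$ the Higgs complex has $\mathbb{H}^0=\mathbb{H}^2=0$ and $\dim\mathbb{H}^1=-\chi=2n^2(g-1)=D-2$. On $\Delta$, $\mathbb{H}^1$ jumps to dimension $D$. The pieces $\mathbb{H}^0$ and $\mathbb{H}^2$ are sheaves supported on $\Delta$, not trivial summands you can split off. So there is no rank-$D$ vector bundle with a tautological section vanishing transversally along $\Delta$. The correct form of the identity writes $[\Delta]$ as $c_D$ of a class of virtual rank $D-2$. Proving it requires a degeneracy-locus (excess) argument adapted to a rank jump of two.

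A smaller point concerns Step 2. For the odd factors $e_{[\delta_i]}\in K^1_{\mathrm{top}}$, integrality of the Chern classes of external products does not come from the splitting principle. It comes from the universal case $U\times U$, whose integral cohomology is the exterior algebra on the primitive classes; this is where the half-integer-indexed Chern classes enter.
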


\begin{rmk}
\begin{enumerate}
    \item In this section we only need Theorem \ref{Markman_integral} for the low degree cohomology $H^{\leq 4}(M_{n,d}, \BZ)$. This may be deduced directly from the corresponding result of Atiyah--Bott for the moduli space of vector bundles \cite{AB} and the codimension estimate \cite{BGPG}.
    \item For a class in topological $K$-theory, there are non-trivial Chern classes indexed by half-integers. These classes are needed to yield generators in odd cohomology in Theorem \ref{Markman_integral}.
\end{enumerate}
    
\end{rmk}

Let $\CF$ be a vector bundle on $C$  with
\[
\mathrm{rk}(\CF) = n, \quad  \mathrm{deg}(\CF) =-\chi.
\]
The associated generalized theta line bundle is defined to be
\begin{equation*}\label{eqn: def Theta}
    \Theta_\CF:=\mathrm{det}Rp_{M*}(\CU^\circ_d\otimes p_C^*\CF)^{-1} \in \mathrm{Pic}(M_{n,d})
\end{equation*}
with $p_{C}, p_M$ the natural projections. By \cite[Proposition 4.5]{Alper}, the class
\[
\Theta:= c_1(\Theta_\CF) \in H^2(M_{n,d}, \BZ)
\]
is independent of $\CF$.

The following lemma expresses $\Theta$ in terms of the tautological classes of Theorem \ref{Markman_integral}.

\begin{lem}\label{lem2.2}
We have
\[
\Theta = d \; c_1(e_{[\mathbf{1}_C]}) -n\; c_1(e_{[\mathrm{pt}_C]}) \in H^2(M_{n,d}, \BZ).
\]
 Equivalently, we have
     \begin{equation}\label{lem2.2_id0}
    \Theta = \chi\; c_1(\CU^\circ_d{|}_{c_0 \times M_{n,d}}) -n \; c_1(Rp_{M*} \CU^\circ_d) \in H^2(M_{n,d}, \BZ).
    \end{equation}
\end{lem}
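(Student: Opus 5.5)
Since $c_1$ of the determinant of a pushforward depends only on the class in topological $K$-theory, I will reduce everything to Grothendieck--Riemann--Roch applied to the $K$-theoretic Künneth decomposition (\ref{eqn: k-kunneth}).

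\textbf{Step 1: determinants of pushforwards.} For any topological $K$-class $y$ on $C$ with $\mathrm{rk}(y)=r$ and $\deg(y)=e$, I will show
\[
c_1\bigl(Rp_{M*}([\CU^\circ_d]\otimes p_C^* y)\bigr) = (e+r(1-g))\, c_1(e_{[\mathbf{1}_C]}) + r\, c_1(e_{[\mathrm{pt}_C]}).
\]
Substitute (\ref{eqn: k-kunneth}); the pushforward becomes $\sum_\# e_{[\#]}\cdot \chi(C,[\#]\otimes y)$. Here I use the Künneth formula in $K$-theory: $p_{M*}(p_M^*x\otimes p_C^*z)=x\cdot p_{C*}(z)$, with $p_{C*}$ the index map to $K^*(\mathrm{pt})$. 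The odd classes $[\delta_i]\otimes y$ lie in $K^1(C)$, so their index lies in $K^1(\mathrm{pt})=0$ and they contribute nothing. Riemann--Roch gives
\[
\chi([\mathbf{1}_C]\otimes y)=e+r(1-g), \qquad \chi([\mathrm{pt}_C]\otimes y)=r .
\]
Finally $c_1$ is additive in $K$-theory. One subtlety needs checking: the identification of the odd classes with integral index is legitimate, since topological index on $C\times M\to M$ is compatible with the Künneth splitting.

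\textbf{Step 2: apply to $\Theta$.} Take $y=[\CF]$, so $r=n$ and $e=-\chi$. Then $e+r(1-g)=-\chi+n(1-g)=-d$. Because $\Theta_\CF$ is the inverse determinant, this gives
\[
\Theta=d\,c_1(e_{[\mathbf{1}_C]})-n\,c_1(e_{[\mathrm{pt}_C]}),
\]
which is the first formula.

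\textbf{Step 3: the equivalent form.} Restricting (\ref{eqn: k-kunneth}) to $c_0\times M$ kills the $\delta_i$ and $\mathrm{pt}_C$ terms, since $[\mathrm{pt}_C]$ restricts to $0$ in $K^0(c_0)$ when it denotes $[\CO_{c_0}]-[\CO]$. The key is that $\mathrm{pt}_C$ is represented as a rank-zero class, so $[\CU^\circ_d|_{c_0\times M}]=e_{[\mathbf{1}_C]}$. Step 1 with $y=\CO_C$, i.e. $r=1$ and $e=0$, gives
\[
c_1(Rp_{M*}\CU^\circ_d)=(1-g)\,c_1(e_{[\mathbf{1}_C]})+c_1(e_{[\mathrm{pt}_C]}).
\]
Hence
\[
\chi\, c_1(e_{[\mathbf{1}_C]})-n\bigl((1-g)\,c_1(e_{[\mathbf{1}_C]})+c_1(e_{[\mathrm{pt}_C]})\bigr)=d\,c_1(e_{[\mathbf{1}_C]})-n\,c_1(e_{[\mathrm{pt}_C]}),
\]
using $\chi-n(1-g)=d$. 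This matches Step 2.

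\textbf{Main obstacle.} The obstacle is bookkeeping of conventions rather than substance. I must pin down the lifts $[\mathbf{1}_C]$ and $[\mathrm{pt}_C]$ in $K^0(C)$ so that $[\mathrm{pt}_C]$ has rank $0$ and degree $1$, and $[\mathbf{1}_C]=[\CO_C]$. I also need to check that $K^1$ terms have vanishing index integrally, not just rationally. I would fix $[\mathrm{pt}_C]=[\CO_{c_0}]$ first and verify the rank and degree pairings directly.
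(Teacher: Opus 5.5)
Your proposal is correct and follows the paper's proof. It pushes the $K$-theoretic K\"unneth decomposition forward against $p_C^*\CF$ via the projection formula and Riemann--Roch, using $\chi(\CF)=-d$ and $\chi(\mathrm{pt}_C\cdot\CF)=n$. It then obtains the second form from $e_{[\mathbf{1}_C]}=[\CU^\circ_d|_{c_0\times M_{n,d}}]$ and $e_{[\mathrm{pt}_C]}=[Rp_{M*}\CU^\circ_d]-(1-g)[\CU^\circ_d|_{c_0\times M_{n,d}}]$.

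The only slip is in Step 3. There $[\CO_{c_0}]-[\CO]$ has rank $-1$, so it would not restrict to $0$ at $c_0$. The rank-zero lift you actually need, and later use, is $[\CO_{c_0}]=[\CO_C]-[\CO_C(-c_0)]$.
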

\begin{proof}
    Applying $p_{M!}(-\otimes p_C^*\CF)$ to \eqref{eqn: k-kunneth}, we obtain that 
    \begin{align*}
        p_{M!}(\CU_d^{\circ}\otimes p_C^*\CF)&=e_{[\mathbf{1}_C]}\chi(\CF) +e_{[\mathrm{pt}_C]}\chi(\mathrm{pt}_C\cdot \CF)\\
        &=-d\; e_{[\mathbf{1}_C]}+n\; e_{[\mathrm{pt}_C]}.
    \end{align*}
    where the first equality follows from the projection formula, and the second equality follows from applying Riemann-Roch to $\CF$ and the fact that $\mathrm{pt}_C\cdot \CF$ is represented by a sheaf of rank $n$ supported at a point on $C$. This proves the first identity. The second identity follows from the first by noticing that
    \begin{equation*}\label{K-theory0}
    e_{[\mathbf{1}_C]} = [\CU^\circ_d{|}_{c_0 \times M_{n,d}}], \quad  e_{[\mathrm{pt}_C]} = [Rp_{M*} \CU_d^\circ] -(1-g)[\CU^\circ_d{|}_{c_0 \times M_{n,d}}]. \qedhere
    \end{equation*}
\end{proof}

Next, we calculate the first Chern class of $\CU^\circ_d$; this is crucial for the normalization calculations associated with $\CU^\circ_d$ later.

\begin{prop}\label{prop2.3}
   The class $c_1(\CU^\circ_d) \in H^2(C\times M_{n,d}, \BZ)$ satisfies
   \begin{equation*}\label{eqn: c1U}
    c_1(\CU^\circ_d)= d \;p_C^*\mathrm{pt}_C+b \; p_{M}^*\Theta+ \text{terms~in~}H^1(C, \BZ)\otimes H^1(M_{n,d}, \BZ).
\end{equation*}
\end{prop}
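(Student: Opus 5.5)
The plan is to compute $c_1(\CU^\circ_d)$ one K\"unneth component at a time. Since $H^*(C,\BZ)$ is torsion free, the K\"unneth decomposition gives
\[
H^2(C\times M_{n,d},\BZ) = H^2(C,\BZ)\otimes H^0(M_{n,d},\BZ)\ \oplus\ H^1(C,\BZ)\otimes H^1(M_{n,d},\BZ)\ \oplus\ H^0(C,\BZ)\otimes H^2(M_{n,d},\BZ).
\]
The middle summand is left unspecified in the statement. So it suffices to identify the two outer components. These are detected by restricting to the slices $C\times m_0$ and $c_0\times M_{n,d}$ respectively, since the middle summand restricts to zero on both.

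For the $H^2(C)\otimes H^0(M)$ component, I restrict to $C\times m_0$. There $\CU^\circ_d$ is the Higgs bundle parametrized by $m_0$, twisted by pullbacks of line bundles from the point $m_0$, which are trivial. It therefore has degree $d$, so this component is $d\,p_C^*\mathrm{pt}_C$.

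For the $H^0(C)\otimes H^2(M)$ component, I compute $x:=c_1(\CU^\circ_d|_{c_0\times M_{n,d}})$ and show that $x=b\,\Theta$. Write $\ell_0 = c_1(\CL_{c_0})$ and $\ell_{\det}=c_1(\CL_{\mathrm{det}})$.

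Since $\CU^\circ_d = \CU_d\otimes p_M^*(\CL_{c_0}^{-a}\otimes\CL_{\det}^{-b})$ has rank $n$, we get
\[
x = \ell_0 - n(a\ell_0+b\ell_{\det}) = (1-na)\ell_0 - nb\,\ell_{\det}.
\]
Using \eqref{a_b}, namely $1-na=b\chi$, this becomes $x = b\,y$ with $y:=\chi\ell_0 - n\ell_{\det}$.

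Similarly, $Rp_{M*}\CU^\circ_d = Rp_{M*}\CU_d\otimes(\CL_{c_0}^{-a}\otimes\CL_{\det}^{-b})$ has virtual rank $\chi$ by Riemann--Roch. Hence
\[
c_1(Rp_{M*}\CU^\circ_d) = \ell_{\det} - \chi(a\ell_0+b\ell_{\det}) = -a\chi\,\ell_0 + (1-b\chi)\ell_{\det},
\]
and since $1-b\chi = an$ this equals $-a\,y$.

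Now I substitute both expressions into the second formula \eqref{lem2.2_id0} of Lemma \ref{lem2.2}:
\[
\Theta = \chi\cdot b y - n\cdot(-a y) = (b\chi+an)\,y = y.
\]
Therefore $x = b\,\Theta$, which gives the component $b\,p_M^*\Theta$ and completes the proof.

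The only step needing care is the bookkeeping of how $c_1$ of a determinant of a pushforward changes under twisting by a line bundle from $M_{n,d}$, namely that the exponent is the virtual rank $\chi$. This is already recorded in the paper's remark that $\CL_{\det}\mapsto\CL_{\det}\otimes L^\chi$. Everything else is linear algebra using $an+b\chi=1$.
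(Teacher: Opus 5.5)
Your proof is correct and is essentially the paper's argument: both read off the $H^0(C)\otimes H^2(M_{n,d})$ component by restricting to $c_0\times M_{n,d}$, and both then combine the second identity of Lemma \ref{lem2.2} with $an+b\chi=1$. The only difference is cosmetic: the paper uses just the normalization relation $a\,c_1(\CU^\circ_d|_{c_0\times M_{n,d}})+b\,c_1(Rp_{M*}\CU^\circ_d)=0$, whereas you compute both classes explicitly as $b\,y$ and $-a\,y$, which additionally identifies $\Theta=\chi\ell_0-n\ell_{\det}$ in terms of the original $\CU_d$.
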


\begin{proof}
    The first term is clear. For the second term, we note that the definition of $\CU^\circ_d$ gives
    \[
    \mathrm{det}(\CU^\circ_d{|}_{c_0 \times M_{n,d}})^a \otimes \mathrm{det}(Rp_{M*} \CU^\circ_d)^b \simeq \CO_{M_{n,d}},
    \]
    which implies that
    \begin{equation}\label{lem2.1_id}
    a c_1(\CU^\circ_d{|}_{c_0 \times M_{n,d}}) +b c_1(Rp_{M*} \CU^\circ_d) = 0.
    \end{equation}
    Therefore, we have
    \begin{equation}\label{c_1(e)}
     c_1(\CU^\circ_d{|}_{c_0 \times M_{n,d}}) = b\chi c_1(\CU^\circ_d{|}_{c_0 \times M_{n,d}}) -bn c_1(Rp_{M*} \CU^\circ_d) =   b \Theta 
     \end{equation}
     where the first identity is implied by (\ref{lem2.1_id}) and (\ref{a_b}), and the second identity is given by (\ref{lem2.2_id0}).
\end{proof}




For our purpose, we discuss the interaction between the normalized tautological classes (of Section \ref{sec1.2}) and the integral tautological classes in $H^4(M_{n,d}, \BZ)$.


Recall the normalization condition (\ref{eqn: alpha}). By Proposition \ref{prop2.3} we immediately have
\begin{equation}\label{eqn: two factors of alpha}
    \alpha_C=-\frac{d}{n}\mathrm{pt}_C, \quad  \alpha_M=-\frac{b}{n}\Theta.
\end{equation}
The next proposition expresses the integral classes
\[
c_2(e_{[\mathbf{1}_C]}), \quad c_2(e_{[\mathrm{pt}_C]}) \in H^4(M_{n,d}, \BZ)
\]
in terms of the normalized tautological classes
\[
c_2(\mathrm{pt}_C), \quad c_3(\mathbf{1}_C) \in H^4(M_{n,d}, \BQ)
\]
and the generalized theta divisor $\Theta \in H^2(M_{n,d}, \BZ)$.

\begin{prop}\label{prop2.4}
    We have the following identities:
\begin{equation}\label{eqn: kunneth c2 and normalized ch2}
    c_2(e_{[\mathbf{1}_C]})= \frac{n-1}{2n}b^2\Theta^2- c_2(\mathrm{pt}_C),
\end{equation}
\begin{equation}\label{eqn: complicated c2 kunneth}
    c_2(e_{[\mathrm{pt}_C]}) =\kappa_d\Theta^2-\frac{d}{n}  c_2(\mathrm{pt}_C) - c_3(\mathbf{1}_C), 
\end{equation}
where the constant $\kappa_d$ is given by
\begin{equation*}\label{eqn: md}
    \kappa_d=\frac{1}{2n^2}(1+(d-1)b(bd-2)).
\end{equation*}
\end{prop}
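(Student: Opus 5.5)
We compare Chern characters. Write $\mathrm{ch}(\CU^\circ_d)$ and the normalized twisted character $\mathrm{ch}^\alpha(\CU_d)$. The twisted character is independent of the choice of universal bundle, so we may take $\CU_d=\CU^\circ_d$. With $\alpha=p_C^*\alpha_C+p_M^*\alpha_M$ as in (\ref{eqn: two factors of alpha}), we have
\[
\mathrm{ch}(\CU^\circ_d)=\mathrm{ch}^\alpha(\CU^\circ_d)\cdot \exp(-\alpha).
\]
We then take slant products with $\mathrm{pt}_C$ and $\mathbf{1}_C$ to get the Chern characters of the K\"unneth factors. The relevant identifications are $e_{[\mathbf{1}_C]}=[\CU^\circ_d|_{c_0\times M}]$, so that $\mathrm{ch}(e_{[\mathbf{1}_C]})=\int_{\mathrm{pt}_C}\mathrm{ch}(\CU^\circ_d)$. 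For the other factor, $\mathrm{ch}(e_{[\mathrm{pt}_C]})$ is obtained from $\int_{\mathbf 1_C}$-type pushforward. Concretely, by Grothendieck--Riemann--Roch $\mathrm{ch}(Rp_{M*}\CU^\circ_d)=p_{M*}(\mathrm{ch}(\CU^\circ_d)\cdot(1+(1-g)\mathrm{pt}_C))$, and we combine this with the $K$-theory expression in the proof of Lemma \ref{lem2.2}.

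The computation runs as follows. The normalized $\mathrm{ch}^\alpha_1$ lies in $H^1(C)\otimes H^1(M)$, and $\mathrm{ch}^\alpha_0=n$. Expanding $\exp(-\alpha)=\exp(\tfrac{d}{n}\mathrm{pt}_C)\exp(\tfrac{b}{n}\Theta)$ and restricting to a point of $C$ gives
\[
\mathrm{ch}(e_{[\mathbf 1_C]})=\exp(\tfrac bn\Theta)\bigl(n+c_2(\mathrm{pt}_C)+\cdots\bigr),
\]
since the restriction of $\mathrm{ch}^\alpha_i$ to $c_0\times M$ is $c_i(\mathrm{pt}_C)$, with $c_1(\mathrm{pt}_C)=0$. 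Hence $\mathrm{ch}_1=b\Theta$, consistent with (\ref{c_1(e)}), and
\[
\mathrm{ch}_2=\tfrac{b^2}{2n}\Theta^2+c_2(\mathrm{pt}_C).
\]
The Newton identity $c_2=\tfrac12 c_1^2-\mathrm{ch}_2$ then gives
\[
c_2=\tfrac{b^2}{2}\Theta^2-\tfrac{b^2}{2n}\Theta^2-c_2(\mathrm{pt}_C),
\]
which is (\ref{eqn: kunneth c2 and normalized ch2}).

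For $e_{[\mathrm{pt}_C]}$ we integrate $\mathrm{ch}(\CU^\circ_d)$ over $C$ and then subtract $(1-g)\,\mathrm{ch}(e_{[\mathbf 1_C]})$. The ingredients in degree $\le 4$ on $M$ are the following.
\begin{itemize}
\item $\int_C\mathrm{ch}^\alpha_{i+1}=c_{i+1}(\mathbf 1_C)$ for the pure part.
\item The $\tfrac dn\mathrm{pt}_C$ term from $\exp(-\alpha)$ multiplies the restriction $c_i(\mathrm{pt}_C)$.
\item A cross term arises from $(\mathrm{ch}^\alpha_1)^2$-type contributions. We do not need these separately: $\int_C\mathrm{ch}^\alpha_2=c_2(\mathbf 1_C)$ already absorbs them. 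Only $\mathrm{ch}_1^\alpha$ appears linearly, and its integral over $C$ vanishes.
\end{itemize}
One also needs $c_2(\mathbf 1_C)$. From $\int_C$ of the degree-2 part together with the known $\mathrm{ch}_1(Rp_{M*})$ from (\ref{lem2.1_id}) and (\ref{c_1(e)}), it comes out as $c_1(e_{[\mathrm{pt}_C]})$ up to $\Theta$ multiples. By Lemma \ref{lem2.2} and Proposition \ref{prop2.3}, $c_1(e_{[\mathrm{pt}_C]})=\tfrac{(bd-1)}{n}\Theta$. Assembling $\mathrm{ch}_1,\mathrm{ch}_2$ of $e_{[\mathrm{pt}_C]}$ in terms of $\Theta$, $c_2(\mathrm{pt}_C)$ and $c_3(\mathbf 1_C)$, we apply $c_2=\tfrac12\mathrm{ch}_1^2-\mathrm{ch}_2$. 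The $\Theta^2$ coefficient should then simplify, using $an+b\chi=1$ and $\chi=d+(1-g)n$, to the stated $\kappa_d$.

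The main obstacle is the bookkeeping in the second identity. The $g$-dependent terms from $(1-g)$ and $\chi$ must cancel, and the coefficient of $\Theta^2$ must collapse to $\tfrac{1}{2n^2}(1+(d-1)b(bd-2))$. I would carry this out by tracking each $\Theta^2$ contribution separately — from $\tfrac12 c_1^2$, from $\exp(\tfrac bn\Theta)$ times $\mathrm{ch}_0$ terms, and from the $\tfrac dn$ twist — and simplifying with the relation $an+b\chi=1$ only at the end.
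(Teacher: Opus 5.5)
Your method is the paper's: undo the normalizing twist, read off K\"unneth components, and apply $c_2=\tfrac12c_1^2-\mathrm{ch}_2$ on $M_{n,d}$. The paper expands $\mathrm{ch}^\alpha=\mathrm{ch}\cdot e^{\alpha}$ rather than $\mathrm{ch}=\mathrm{ch}^\alpha e^{-\alpha}$, which makes no difference, and your proof of the first identity is complete.

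The second identity, however, is not proved. You only assert that the $\Theta^2$ coefficient ``should'' collapse to $\kappa_d$, and part of the bookkeeping you describe is wrong.

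\begin{itemize}
\item Since $\mathrm{ch}[\mathbf 1_C]=1$ and $\mathrm{ch}[\mathrm{pt}_C]=\mathrm{pt}_C$, the K\"unneth decomposition gives directly $\mathrm{ch}(e_{[\mathrm{pt}_C]})=p_{M*}\mathrm{ch}(\CU^\circ_d)$, with nothing to subtract.
\item Subtracting $(1-g)\,\mathrm{ch}(e_{[\mathbf 1_C]})$ is correct only after you insert the Todd factor $1+(1-g)\mathrm{pt}_C$; the two $(1-g)$-terms then cancel identically. Your bullet list computes the plain integral, so following your sentence literally gives rank $d-(1-g)n$ and wrong degree-$4$ terms.
\item Neither $g$, $\chi$, nor $an+b\chi=1$ enters at this stage. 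They are already absorbed into Proposition \ref{prop2.3} and $c_1(e_{[\mathrm{pt}_C]})=\frac{bd-1}{n}\Theta$.
\item There are no ``$(\mathrm{ch}_1^\alpha)^2$ cross terms'': $\mathrm{ch}^\alpha e^{-\alpha}$ is linear in $\mathrm{ch}^\alpha$. Every term containing an odd K\"unneth piece is killed by $p_{M*}$, because $\alpha$ has no $H^1(C)$-component.
\end{itemize}

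The missing computation is short. Use $c_1(\mathbf 1_C)=0$, $c_0(\mathrm{pt}_C)=n$, $c_1(\mathrm{pt}_C)=0$ and $\mathrm{pt}_C^2=0$. Then
\[
\mathrm{ch}(e_{[\mathrm{pt}_C]})=\Big(\sum_i c_i(\mathbf 1_C)+\frac dn\sum_i c_i(\mathrm{pt}_C)\Big)\exp\Big(\frac bn\Theta\Big).
\]
In degree $2$ this reads $c_1(e_{[\mathrm{pt}_C]})=c_2(\mathbf 1_C)+\frac{bd}{n}\Theta$, so $c_2(\mathbf 1_C)=-\frac1n\Theta$. This is the step you left as ``up to $\Theta$ multiples''. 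Your direction needs it; the paper's direction avoids it because $c_1(e_{[\mathrm{pt}_C]})$ enters there through $\int_C\alpha\,\mathrm{ch}_2$.

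In degree $4$,
\[
\mathrm{ch}_2(e_{[\mathrm{pt}_C]})=c_3(\mathbf 1_C)+\frac dn c_2(\mathrm{pt}_C)+\Big(\frac{db^2}{2n^2}-\frac{b}{n^2}\Big)\Theta^2,
\]
hence
\[
c_2(e_{[\mathrm{pt}_C]})=\frac{(bd-1)^2+2b-db^2}{2n^2}\,\Theta^2-\frac dn c_2(\mathrm{pt}_C)-c_3(\mathbf 1_C).
\]
Finally $(bd-1)^2+2b-db^2=1+(d-1)b(bd-2)$, which gives $\kappa_d$. With this computation inserted, your argument is complete.
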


\begin{proof}


The proof is essentially applying Riemann--Roch combined with the normalization formulas (\ref{eqn: two factors of alpha}).

We first show \eqref{eqn: kunneth c2 and normalized ch2}. By definition
\[
\mathrm{ch}_2^{\alpha}(\CU^\circ_d)=\mathrm{ch}_2(\CU_d^{\circ})+\alpha c_1(\CU^\circ_d)+\frac{n}{2}\alpha^2, \quad \alpha = p_C^*\alpha_C +p_M^*\alpha_M.
\]
The normalized class $c_2(\mathrm{pt}_C)$ is given by the K\"unneth factor of $\mathrm{ch}^\alpha_2(\CU^{\circ})$ in 
\[
H^0(C,\BQ)\otimes H^4(M_{n,d}, \BQ). 
\]
Therefore, by Proposition \ref{prop2.3} which calculates $c_1(\CU^\circ_d)$, we have
\[
c_2(\mathrm{pt}_C) = \mathrm{ch}_2(e_{[\mathbf{1}_C]}) - \frac{b^2}{n} \Theta^2 + \frac{b^2}{2n} \Theta^2 = -c_2(e_{[\mathbf{1}_C]}) + \frac{n-1}{2n} b^2 \Theta^2
\]
where we have used $c_2 = \frac{1}{2}c_1^2 - \mathrm{ch}_2$ in the second identity. This proves (\ref{eqn: kunneth c2 and normalized ch2}).

We now show \eqref{eqn: complicated c2 kunneth}. By the first equation of Lemma \ref{lem2.2} and (\ref{c_1(e)}), we obtain
\[
c_1(e_{[\mathrm{pt}_C]}) = \frac{bd-1}{n} \Theta.
\]
Again, by definition
\[
\mathrm{ch}_3^{\alpha}(\CU^\circ_d)=\mathrm{ch}_3(\CU^\circ_d)+\alpha\mathrm{ch}_2(\CU^\circ_d)+\frac{\alpha^2}{2}c_1(\CU_d^{\circ})+\frac{n\alpha^3}{6}.
\]
Using \eqref{eqn: two factors of alpha}, we have that 
\begin{equation*}\label{eqn: ch3hat}
      c_3(\mathbf{1}_C)  = \mathrm{ch}_2(e_{[\mathrm{pt}_C]})+\Big(-\frac{b}{n}\Theta c_1(e_{[\mathrm{pt}_C]})-\frac{d}{n}\mathrm{ch}_2(e_{[\mathbf{1}_C]})\Big)+\frac{3 b^2d}{2n^2}\Theta^2-\frac{b^2d}{2n^2}\Theta^2
\end{equation*}
from which \eqref{eqn: complicated c2 kunneth} follows. 
\end{proof}

\begin{rmk}
    The rational cohomology $H^*(M_{n,d}, \BQ)$ is bigraded by Theorem \ref{thm0.3}. However, the Chern grading does not exist for the integral cohomology $H^*(M_{n,d}, \BZ)$; it is essential to use the normalized tautological classes, which are the natural ones induced by the classes pulled back from $H^*(B\mathrm{PGL}_n, \BQ)$, to describe the Chern grading, and those classes are not integral.
\end{rmk}

\subsection{Proof of Theorem \ref{thm0.5}}
The discussion in Section \ref{sec2.2} mainly concerns the cohomology of $M_{n,d}$. In order to complete the proof of Theorem \ref{thm0.5}, we pass these structures to the smaller graded $\BQ$-algebra 
\[
R_{g,n} = H^*(\widehat{M}_{n,d}, \BQ),
\]
and the lattice $R_{g,n,d,\BZ} \subset R_{g,n}$ introduced in Section \ref{sec2.1}. The cohomological grading of $R_{g,n}$ naturally endows a grading on $R_{g,n,d,\BZ}$. By the natural quotient map (\ref{natural_quotient}), classes in $H^*(M_{n,d}, \BZ)$ yield classes in $R_{g,n,d,\BZ}$; we use $\theta_d$ to denote the class in $R^2_{g,n,d,\BZ}$ induced by $\Theta$, and $\CA_d, \CB_d$ to denote the classes in $R^4_{g,n,d,\BZ}$ induced by $c_2(e_{[\mathbf{1}_C]}), c_2(e_{[\mathrm{pt}_C]})$ respectively.

\begin{lem}\label{lem2.6}
    We have
    \[
    R^2_{g,n,d,\BZ} = \BZ \theta_d, \quad R^4_{g,n,d,\BZ} = \BZ \theta^2_d \oplus \BZ \CA_d  \oplus \BZ \CB_d.
    \]
\end{lem}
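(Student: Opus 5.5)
We prove Lemma \ref{lem2.6} by combining Markman's integral generation result (Theorem \ref{Markman_integral}) with the product structure (\ref{coh_GL/PGL}) and torsion-freeness \cite[Theorem 6.1]{GS}.

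\emph{Step 1: integral generators in degrees $\le 4$.} By Theorem \ref{Markman_integral}, $H^{\leq 4}(M_{n,d},\BZ)$ is spanned over $\BZ$ by monomials in the Chern classes of the K\"unneth factors $e_{[\mathbf{1}_C]}, e_{[\mathrm{pt}_C]}, e_{[\delta_i]}$ of total degree $\le 4$. Every monomial involving a (half-integer indexed) Chern class of some $e_{[\delta_i]}$ has at least one odd-degree factor. The odd-degree part in degree $1$ is $H^1(M_{n,d},\BZ)$, so such a monomial lies in $H^*(M_{n,d},\BZ)^{\langle 1\rangle}$. For monomials with a degree-$3$ odd factor, we first check that a degree-$3$ class times a degree-$1$ class still lands in the ideal $\langle 1\rangle$. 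Modulo this ideal, degree $2$ is therefore spanned by $c_1(e_{[\mathbf{1}_C]})$ and $c_1(e_{[\mathrm{pt}_C]})$. Degree $4$ is spanned by their products together with $c_2(e_{[\mathbf{1}_C]})$ and $c_2(e_{[\mathrm{pt}_C]})$.

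\emph{Step 2: degree $2$.} By (\ref{c_1(e)}) we have $c_1(e_{[\mathbf{1}_C]}) = b\Theta$, and from the proof of Proposition \ref{prop2.4} we have $c_1(e_{[\mathrm{pt}_C]}) = \frac{bd-1}{n}\Theta$. Lemma \ref{lem2.2} says $\Theta = d\,c_1(e_{[\mathbf{1}_C]}) - n\,c_1(e_{[\mathrm{pt}_C]})$, so $\Theta$ lies in their $\BZ$-span. Conversely, both are integral multiples of $\Theta$: $b$ is an integer, and $\frac{bd-1}{n}$ is an integer because $c_1(e_{[\mathrm{pt}_C]})$ is integral while $\Theta$ is nonzero and, as we argue next, primitive. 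Hence the image of $H^2(M_{n,d},\BZ)$ in $R^2_{g,n}$ is $\BZ\theta_d$. It is at least $\BZ\theta_d$, and it is exactly this line because $R^2_{g,n}=\BQ\alpha_2$ is one-dimensional and the lattice is generated by $\theta_d$. This also forces $\theta_d$ to be primitive, since by Step 1 it generates the image.

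\emph{Step 3: degree $4$.} Products of degree-$2$ generators give $\BZ\theta_d^2$. Adding $\CA_d$ and $\CB_d$ shows that $R^4_{g,n,d,\BZ}$ is the $\BZ$-span of $\theta_d^2, \CA_d, \CB_d$. It remains to show these three are linearly independent over $\BQ$, since a spanning set of three elements in the three-dimensional space $R^4_{g,n}$ must then be a basis. Project Proposition \ref{prop2.4} to $R_{g,n}$: there $c_2(\mathrm{pt}_C)\mapsto \beta_2$, $c_3(\mathbf{1}_C)\mapsto\alpha_3$, and $\Theta$ maps to a nonzero multiple of $\alpha_2$. This gives
\[
\CA_d \in \tfrac{n-1}{2n}b^2\theta_d^2 - \beta_2, \qquad \CB_d = \kappa_d\theta_d^2 - \tfrac{d}{n}\beta_2 - \alpha_3.
\]
The transition matrix from $(\theta_d^2,\beta_2,\alpha_3)$ to $(\theta_d^2,\CA_d,\CB_d)$ is triangular with diagonal entries $\pm1$, so it is invertible over $\BQ$, using $R^4_{g,n}=\BQ\alpha_2^2\oplus\BQ\alpha_3\oplus\BQ\beta_2$.

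The main obstacle is the careful bookkeeping in Step 1. One must justify that all classes involving $e_{[\delta_i]}$ die modulo the ideal $\langle 1\rangle$, and that the quotient lattice is exactly the image; if Markman's half-integer classes need care here, this is where it arises. Torsion-freeness ensures the images behave well rationally. A secondary point is identifying the image of $\Theta$ in $R_{g,n}$ as nonzero, which holds since $\Theta$ is ample on the vector bundle locus and so is nonzero in $H^2$.
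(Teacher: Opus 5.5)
Your proposal follows the same route as the paper. Markman's integral generators, taken modulo the ideal generated by $H^1$, reduce to $c_1(e_{[\mathbf{1}_C]}), c_1(e_{[\mathrm{pt}_C]})$ in degree $2$ and to $\Theta^2, c_2(e_{[\mathbf{1}_C]}), c_2(e_{[\mathrm{pt}_C]})$ in degree $4$. The degree-$2$ classes are identified as multiples of $\Theta$ via Lemma \ref{lem2.2} and (\ref{c_1(e)}). Independence in degree $4$ rests on $\dim_\BQ R^4_{g,n}=3$, which is exactly what the paper takes from \cite[Lemma 4.1.12]{HRV}. Your triangular change of basis via Proposition \ref{prop2.4} is correct but not needed: three classes whose $\BZ$-span is a full lattice in a $3$-dimensional space are automatically a basis.

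The one step that does not hold up as written is your justification that $\frac{bd-1}{n}\in\BZ$. You deduce it from primitivity of $\theta_d$, and then deduce primitivity from the degree-$2$ lattice being $\BZ\theta_d$. But that equality is exactly what needed the integrality, so the argument is circular. The fix is elementary arithmetic: from $an+b\chi=1$ and $\chi=d+(1-g)n$ you get $bd-1=-n\bigl(a+b(1-g)\bigr)$. Hence both coefficients $b$ and $\frac{bd-1}{n}$ are integers. Together with $\theta_d=d\cdot b\theta_d-n\cdot\frac{bd-1}{n}\theta_d$, this shows the image of $H^2(M_{n,d},\BZ)$ is exactly $\BZ\theta_d$.

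Relatedly, for $\theta_d\neq 0$ you need nonvanishing modulo the ideal generated by $H^1$, not in $H^2(M_{n,d},\BQ)$. This comes for free once the degree-$2$ lattice is $\BZ\theta_d$, because that image spans $R^2_{g,n}=\BQ\alpha_2\neq 0$. The ampleness remark is therefore unnecessary.
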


\begin{proof}
    We first show that $\theta_d$ generates $R^2_{g,n,d,\BZ}$. By Theorem \ref{Markman_integral}, the second integral cohomology $H^2(M_{n,d}, \BZ)$ is spanned by $c_1(e_{[\mathbf{1}_C]}), c_1(e_{[\mathrm{pt}_C]})$ and integral classes lying in the ideal generated by $H^1(M_{n,d}, \BZ)$. Therefore, after taking the quotient (\ref{natural_quotient}), we know that $R^2_{g,n,d,\BZ}$ is spanned by images of $c_1(e_{[\mathbf{1}_C]}), c_1(e_{[\mathrm{pt}_C]})$. Furthermore, by Lemma \ref{lem2.2} and (\ref{c_1(e)}), both classes are proportional to $\Theta$:
    \[
    c_1(e_{[\mathbf{1}_C]}) = b\Theta, \quad c_1(e_{[\mathrm{pt}_C]}) = \frac{bd-1}{n} \Theta.
    \]
    We conclude that $\theta_d$ is sufficient to span $R^2_{g,n,d,\BZ}$.

    For degree $4$, Theorem \ref{Markman_integral} again implies that $H^4(M_{n,d}, \BZ)$ is spanned by \[
    c_2(e_{[\mathbf{1}_C]}),\quad  c_2(e_{[\mathrm{pt}_C]}), \quad \Theta^2
    \] 
 and integral classes lying in the ideal generated by $H^1(M_{n,d}, \BZ)$. Therefore $R^4_{g,n,d,\BZ}$ is spanned by $\CA_d,\CB_d, \theta^2_d$. Finally, there are no relations among these three classes by \cite[Lemma 4.1.12]{HRV}.
\end{proof}

We are now ready to prove Theorem \ref{thm0.5}.

By definition, the images of the (rational!) normalized classes 
\[
c_3(\mathbf{1}_C),~~ c_2(\mathrm{pt}_C) \in H^4(M_{n,d}, \BQ)
\]
under the quotient map (\ref{natural_quotient}) recover
\[
\alpha_3,~~  \beta_2 \in R^4_{g,n}.
\]
Again, \cite[Lemma 4.1.12]{HRV} implies that there are no relations among these classes, so that
\begin{equation}\label{basis_R4}
R^4_{g,n} =  \BQ \alpha^2_2 \oplus \BQ\alpha_3 \oplus \BQ \beta_2;
\end{equation}
we note that $\alpha_2$ is clearly proportional to $\theta_d$ as both are generators of the 1-dimensional $\BQ$-vector space $R^2_{g,n}$.

Now, we prove Theorem \ref{thm0.5} by contradiction: assume that there is an automorphism $\widehat{\Phi}_{d,d'} \in G_{g,n}$ as in Section \ref{sec2.1} satisfying (\ref{integral}). 

First, this automorphism has to preserve $\theta_d$ up to a sign,
\[
\widehat{\Phi}_{d,d'}(\theta_d) = \pm \theta_{d'};
\]
in particular, we have
\[
\widehat{\Phi}_{d,d'}(\theta_d^2) = \theta^2_{d'}.
\]

Furthermore, since by assumption $G^{\mathrm{wd}}_{g,n} = G_{g,n}$, we know that 
\[
\widehat{\Phi}_{d,d'}(\alpha_3) = \mu \alpha_3,\quad \widehat{\Phi}_{d,d'}(\beta_2) = \nu \beta_2, \quad \mu, \nu \in \BQ^*.
\]

We need the following lemma to rule out the existence of $\widehat{\Phi}_{d,d'}$.

\begin{lem}\label{lem: faster lattice lemma}
Let $d,d',b,b'$ be integers.
    We consider the lattices
    \[L:=\BZ\left\langle (u,0,0), (\frac{n-1}{2n}ub^2,0, -1),(u\kappa_d,-1,-\frac{d}{n}) \right\rangle\subset \BQ^3\]
    and
     \[L':=\BZ\left\langle (u',0,0), (\frac{n-1}{2n}u'{b'}^2,0,-1),(u'\kappa_{d'},-1, -\frac{d'}{n}) \right\rangle\subset \BQ^3\]
    where $\kappa_d, \kappa_{d'}$ are defined as in Proposition \ref{prop2.4}, and $u,u'$ are non-zero rational numbers. Assume that the group $(\BQ^*)^3$ acts on $\BQ^3$ diagonally with respect to the standard coordinate basis. If there exists an element in $(\BQ^*)^3$ that sends $L$ to $L'$, then (\ref{condition}) holds.
\end{lem}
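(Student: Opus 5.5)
The plan is to extract invariants of the lattices that a diagonal rescaling must respect, one coordinate at a time. Write
\[
v_1=(u,0,0),\quad v_2=\Big(\tfrac{n-1}{2n}ub^2,0,-1\Big),\quad v_3=\Big(u\kappa_d,-1,-\tfrac dn\Big),
\]
and define $v_1',v_2',v_3'$ analogously for $L'$. Suppose $g=(x,y,z)\in(\BQ^*)^3$ satisfies $g(L)=L'$. We will pin down $y$ and $z$ up to sign, and then read off a congruence from where $g$ sends $v_3$.

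\emph{Step 1: $y=\pm1$.} Projection to the second coordinate sends $L$ onto $\BZ$, because $v_3$ has second coordinate $-1$ and $v_1,v_2$ have second coordinate $0$. The same holds for $L'$. Since $g$ scales the second coordinate by $y$, we get $y\BZ=\BZ$, so $y=\pm1$.

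\emph{Step 2: $z=\pm1$.} The sublattice $L\cap\{\text{second coordinate}=0\}$ equals $\BZ v_1\oplus\BZ v_2$. To see this, note that any element of $L$ with vanishing second coordinate has zero $v_3$-coefficient. Its projection to the third coordinate is $\BZ$, coming from $v_2$. The analogous statement holds for $L'$. The map $g$ carries this sublattice of $L$ onto the corresponding sublattice of $L'$, and it scales third coordinates by $z$, so $z\BZ=\BZ$ and $z=\pm1$. The $x$-coordinate, and hence the values of $u,u',b,b',\kappa_d,\kappa_{d'}$, will play no role.

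\emph{Step 3: the congruence.} Write $g(v_3)=(xu\kappa_d,-y,-zd/n)\in L'$ as $k v_1'+m v_2'+\ell v_3'$ with $k,m,\ell\in\BZ$. Comparing second coordinates gives $\ell=y$. Comparing third coordinates gives
\[
-\tfrac{zd}{n}=-m-\tfrac{yd'}{n},
\qquad\text{so}\qquad
zd-yd'=mn\in n\BZ.
\]
Since $y,z\in\{\pm1\}$, this yields $d\equiv\pm d'\pmod n$, which is (\ref{condition}).

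There is no serious obstacle. The only point requiring care is Step 2: one must check that the second-coordinate-zero sublattices correspond under $g$. This holds because $g$ is diagonal, so it preserves the hyperplane where the second coordinate vanishes, and $g(L)=L'$. One must also check that these sublattices are exactly $\BZ v_1\oplus\BZ v_2$ and $\BZ v_1'\oplus\BZ v_2'$, which follows from the second coordinate of $v_3$ being $-1$.
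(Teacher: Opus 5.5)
Your proof is correct and is essentially the paper's argument. The paper first projects to the last two coordinates, which is how it discards $x$, $u$, $b$, $\kappa_d$. It then gets $y=\pm1$ from the second coordinate, gets $z=\pm1$ by intersecting with the line where the second coordinate vanishes, and reads off $zd\equiv yd' \pmod n$ from the image of the third generator; working directly in $\BQ^3$, as you do, changes nothing of substance.
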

\begin{proof}
    After passing to the projection to the last two factors, it suffices to prove the following statement: if there exists $(\lambda,\mu)\in (\BQ^*)^2$ that sends $\BZ\langle(0,1),(1,\frac{d}{n})\rangle$ to $\BZ\langle(0,1),(1,\frac{d'}{n})\rangle$, then (\ref{condition}) holds. Indeed, since $\lambda$-scaling has to preserve $\BZ\subset \BQ$ to itself. We must have $\lambda=\pm 1$. Intersecting with the line 
    \[
     \{0\} \times \BQ \subset \BQ^2, 
    \]
    we see that $\mu$-scaling also has to preserve $\BZ \subset \BQ$; thus $\mu=\pm 1$. There exist integers $t_1,t_2\in \BZ$ so that
    \[
    \left( \lambda,  \mu\frac{d}{n} \right)
    =\left(t_2, t_1+t_2\frac{d'}{n}\right),\quad \lambda,\mu =\pm 1.
    \]
    We conclude that 
    \[
\mu \frac{d}{n} = \lambda\frac{d'}{n} + t_1, \quad \lambda,\mu =\pm 1
    \]
    which proves the lemma.
\end{proof}

\begin{proof}[Finish proving Theorem \ref{thm0.5}]
    Under the basis (\ref{basis_R4}), the element $\widehat{\Phi}_{d,d'} \in G_{g,n}$ acts diagonally, and it sends the lattice 
\[
\BZ \theta^2_d \oplus \BZ\CA_d \oplus \BZ\CB_d \subset  \BQ \alpha^2_2 \oplus \BQ\alpha_3 \oplus \BQ \beta_2
\]
to the lattice
\[
 \BZ \theta^2_{d'} \oplus \BZ\CA_{d'} \oplus \BZ\CB_{d'}  \subset  \BQ \alpha^2_2 \oplus \BQ\alpha_3 \oplus \BQ \beta_2
\]
by Lemma \ref{lem2.6}.

Proposition \ref{prop2.4} implies that the expressions of the integral generators $\theta^2_d, \CA_d, \CB_d$ in terms of the rational generators $\alpha^2_2, \alpha_3, \beta_2$ are exactly as in the expression of the lattice $L$ in the coordinate basis of $\BQ^3$. Therefore we conclude that $\widehat{\Phi}_{d,d'}$ cannot exist unless (\ref{condition}) holds. The proof of Theorem \ref{thm0.5} is now complete. 
\end{proof}

\section{A distinguished tautological relation and proof of Theorem \ref{thm0.8}}\label{sec3}

The goal of this subsection is to prove Theorem \ref{thm0.8}. As a corollary, we obtain in Theorem \ref{thm: the unique relation} a closed formula for the relation $\Upsilon_{2,5}=0$. Since only this relation will be used in Section \ref{sec4}, the reader may skip Section \ref{sec3} and proceed directly to Section \ref{sec4}, assuming Theorem \ref{thm: the unique relation}.

Throughout this section, we assume that $g\geq 2$ and $n\geq 3$. The special rank $n=2$ case will be discussed briefly in Remark \ref{rank2}.

\subsection{Poincar\'e polynomials}

As we discussed in Section \ref{sec1.2}, the cohomology $H^*(\widehat{M}_{n,d}, \BQ)$ is generated by the tautological classes (\ref{PGL_taut}). If there were no relations among these classes, the Poincar\'e polynomial for $\widehat{M}_{n,d}$ would be given by the product formula
\[
\widehat{P}^{\mathrm{free}}(t) = \prod_{i= 2}^{n} \frac{(1+t^{2i-1})^{2g}}{(1-t^{2i-2})(1-t^{2i})}. 
\]
Here the numerator is contributed by the exterior algebra generated by the odd classes 
\[
\psi_{i,j} \in H^{2i-1}(\widehat{M}_{n,d}, \BQ), \quad 1\leq j\leq {2g}, 
\]
and the denominator is contributed by the powers of the even classes
\[
\alpha_i \in H^{2i-2}(\widehat{M}_{n,d}, \BQ), \quad \beta_i \in H^{2i}(\widehat{M}_{n,d}, \BQ).
\]
By \eqref{coh_GL/PGL}, the Poincar\'e polynomial for $M_{n,d}$ would be given by 
\begin{equation}\label{P_free}
P^{\mathrm{free}}(t) = (1+t)^{2g}\widehat{P}^{\mathrm{free}}(t) = (1+t)^{2g}\prod_{i= 2}^{n} \frac{(1+t^{2i-1})^{2g}}{(1-t^{2i-2})(1-t^{2i})}.
\end{equation}
Obviously, this is not possible, since the formula (\ref{P_free}) has infinitely many terms; but comparing this formula with the genuine Poincar\'e polynomial
\[
P(t): = \sum_{i} \dim H^i(M_{n,d}, \BQ)\; t^i \in \BZ[t] 
\]
detects the lowest degree where a relation occurs.

The first result of this section is the following. Recall the constant $r_{g,n} := 4g(n-1)-2n$.

\begin{thm}\label{thm3.1}
    Assume that the curve $C$ has genus $g\geq 2$, and assume that the rank satisfies $n\geq 3$. We have
    \[
    P(t) = P^{\mathrm{free}}(t) - t^{r_{g,n}} + O(t^{r_{g,n}+1}).
    \]
\end{thm}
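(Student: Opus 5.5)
The plan is to compare the Hausel--Rodriguez-Villegas formula for $P(t)$, proved by Schiffmann and Mellit, with the free model (\ref{P_free}) in low degrees. The key device is the standard stratification-type description of that formula. It expresses $P(t)$, or more precisely the $\mathrm{PGL}_n$ part $\widehat P(t)=P(t)/(1+t)^{2g}$, as the free Atiyah--Bott-type series $\widehat P^{\mathrm{free}}(t)$ minus correction terms indexed by nontrivial "types". Equivalently, one can use the mixed Hodge polynomial $H(q,t)$ of HRV, specialized at $q=1$. I will organize the correction terms by their lowest $t$-degree, i.e. a "codimension" count.

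First, I will write the HRV generating function in plethystic form. It is $\mathrm{Exp}$ of $\sum_n \widehat H_n$, with $\widehat H_n$ obtained from a sum over partitions $\lambda\vdash n$ weighted by hook factors $\prod_{h}\frac{(z^{2a+1}-w^{2l+1})^{2g}}{(z^{2a+2}-w^{2l})(z^{2a}-w^{2l+2})}$. Taking the plethystic logarithm isolates $\widehat H_n$ as the contribution of the "one-row" partition $\lambda=(n)$ plus the remaining terms.

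I expect the one-row partition to reproduce $\widehat P^{\mathrm{free}}(t)$ exactly, after the substitution making the Chern grading visible. This is consistent with the generators $\alpha_i,\beta_i,\psi_{i,j}$ of Chern degree $i\le n$: the hooks of $(n)$ contribute precisely the factors $(1+t^{2i-1})^{2g}/((1-t^{2i-2})(1-t^{2i}))$.

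The second step is to bound the $t$-adic valuation of every other contribution. These come from the other partitions $\lambda$ and from the plethystic products of lower ranks $n_1+n_2=n$. For a two-block decomposition $(n_1,n_2)$ I will compute the leading degree explicitly. It should be governed by the Euler-form term $2(g-1)n_1n_2$ (the codimension of the corresponding Harder--Narasimhan or Higgs stratum) minus a correction from the normalization. I then minimize over decompositions. The minimum should occur at $\{n_1,n_2\}=\{1,n-1\}$, giving the degree $2(g-1)\cdot 2(n-1)-\ldots$, which I need to match with $r_{g,n}=4g(n-1)-2n$. I will also check that this extreme term has leading coefficient exactly $-1$, coming from a single monomial. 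All other partitions, and decompositions with three or more parts, should then have valuation $>r_{g,n}$ because $n_1n_2$ is convex; for $n\ge3$ the bound is strict, which is where $n\ge 3$ enters.

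The main obstacle is making this valuation estimate rigorous inside the HRV formula, since the plethystic logarithm mixes many terms and cancellations could occur. To control this, I would first try working instead with the $q=1$ (or Poincaré) specialization via Mellit's recursion or the known "Harder--Narasimhan-type" inversion formula for Higgs bundle counts. That formula writes the free series minus a sum over strata $\prod_k(\text{rank } n_k)$ times $t^{2\cdot\mathrm{codim}}$, with the codimension computable by Riemann--Roch, so valuations can be read off directly. Finally, I multiply back by $(1+t)^{2g}$, which does not affect the leading $-t^{r_{g,n}}$ term, to obtain the stated expansion.
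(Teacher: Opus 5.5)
\textbf{There is a genuine gap in your second step, the valuation estimate.} Your first step, that the principal partition term reproduces $P^{\mathrm{free}}(t)$ up to the arm/leg convention, agrees with the paper. The second step is the heart of the theorem, and it rests on a count that gives the wrong degree.

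With $\mathrm{deg}^t\,\Omega_\mu=-(2g-2)\iota(\mu)$, $\iota(\mu)=\|\mu'\|^2$, and the prefactor $t^{(2g-2)n^2+2}(1-t^{-2})$, the terms of second-lowest order have naive degree $(2g-2)\bigl(n^2-(n-1)^2-1\bigr)=4(g-1)(n-1)$. This is exactly your Euler-form count for $\{1,n-1\}$. But $r_{g,n}=4(g-1)(n-1)+(2n-4)$. So the true leading correction lies \emph{above} the codimension count by $2n-4$, not below it by a ``normalization correction'', and it does not come from a single term.

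At the naive order there are two contributions to $[\log\Omega]^n$. One is the hook partition ($(1^{n-2}2)$ when the principal term is the single column). The other is $-\Omega_{(1^{n-1})}\Omega_{(1)}$, from the quadratic term of the logarithm. Each has a double pole at $z=1$, so neither contributes finitely to $\lim_{z\to1}(1-z)(\cdot)$ on its own. Their difference is $\BB_{0,0}^2\prod_{i=1}^{n-3}\BB_{0,i}\,(\BB_{1,n-2}-\BB_{0,n-2})$. Since $\BB_{1,n-2}(t,1)=\BB_{0,n-2}(t,1)$, the leading part cancels. What survives is governed by $\partial_z\log\BB_{1,n-2}-\partial_z\log\BB_{0,n-2}$ at $z=1$, which equals $\frac{t^{2n-4}}{1-t^{2n-4}}+\frac{2gt^{2n-3}}{1+t^{2n-3}}+\frac{t^{2n-2}}{1-t^{2n-2}}=t^{2n-4}+O(t^{2n-3})$. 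This first-order computation in $z-1$ yields both the degree $r_{g,n}$ and the coefficient $-1$.

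This cancellation is the missing idea. It cannot be sidestepped by a sign-coherent ``Harder--Narasimhan-type'' formula with valuations read off by Riemann--Roch. For example, write $P(t)$ via the Bia\l{}ynicki-Birula decomposition as $P(N_{n,d})$ plus contributions of the other $\BC^*$-fixed components. Atiyah--Bott gives $P(N_{n,d})=P^{\mathrm{free}}(t)$ minus Harder--Narasimhan terms starting near degree $2(g-1)(n-1)$, roughly half of $r_{g,n}$. Those negative terms must be cancelled by positive terms from the other components.

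\textbf{Your strictness claim also fails.} You assert that every other term has valuation strictly above $r_{g,n}$ as soon as $n\geq 3$; this is false. For $(g,n)=(2,4)$ the terms $\Omega_{(2^2)}$, $-\frac12\Omega_{(1^2)}^2$ and the Adams-operation term $-\frac12\psi_2(\Omega_{(1^2)})$ all reach degree $r_{2,4}=16$, and you have to check that their contributions cancel.

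More generally, two further estimates are needed. First, the Adams-operation part $\sum_{d\mid n,\,d>1}\frac{\mu(d)}{d}\psi_d\bigl([\log\Omega]^{n/d}\bigr)$ of the plethystic logarithm needs its own bound. Its lowest possible degree is $(2g-2)n^2(1-\frac1d)$, which is $\geq r_{g,n}$ with equality only for $(g,n,d)=(2,4,2)$. Second, the remaining ordinary-logarithm terms must be bounded against $r_{g,n}$ itself, using the largest value of $\sum_i\iota(\mu_i)$ among them, not against the codimension count.
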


We prove Theorem \ref{thm3.1} in Sections \ref{sec3.2} and \ref{sec3.3}; it is clear that this theorem implies Theorem \ref{thm0.8}(a,b).

\subsection{The formulas of Hausel--Rodriguez--Villegas and Mellit}\label{sec3.2}

Our main tool for proving Theorem \ref{thm3.1} is the closed formula for $P(t)$ conjectured by Hausel--Rodriguez-Villegas \cite{HRV} (which we call the HRV formula). Mellit \cite{Mellit} proved the HRV formula by reducing it to Schiffmann's work \cite{Sch} where the latter gives a more complicated formula for $P(t)$.

We first recall the explicit form of the HRV formula, which will be crucial for the proof of Theorem \ref{thm3.1}. We begin by introducing some notation for partitions.

For a partition $\mu$ of an integer and any cell $\Box$, we denote by 
\[
a(\Box), ~~ \ell(\Box) \in \BZ_{\geq 0}
\]
the lengths of the arm and the leg of $\Box$ with respect to $\mu$. We use $|\mu|$ to denote the size of the partition $\mu$
; we write $\mu \vdash n$ if $n = |\mu|$.

For $a,\ell \in \BZ_{\geq 0}$, we define
\[
\BB_{a,\ell}(t,z):= \frac{(z^{a+1} +t^{-2\ell-1})^g(z^a+ t^{-2\ell-1})^g}{(z^{a+1}-t^{-2\ell})(z^a-t^{-2\ell-2})}.
\]
We further define the generating series
\[
\Omega(t,z,T) :=  1+ \Omega_1(t,z) T + \Omega_2(t,z) T^2 + \cdots
\]
where each term is given by the following formulas
\[
\Omega_n(t,z): = \sum_{\mu\vdash n} \Omega_{\mu}(t,z), \quad  \Omega_\mu(t,z) := \prod_{\Box  \, \in\, \mu} \BB_{a(\Box), \ell(\Box)}(t,z).
\]
Equivalently, we have
\[
\Omega_g(t,z,T) := \sum_{\mu} T^{|\mu|} \Omega_\mu(t,z).
\]

The following is the HRV formula proven by Mellit \cite{Mellit}.

\begin{thm}[\cite{Mellit}]\label{thm3.2}
    We have
    \[
    P(t) = t^{(2g-2)n^2+2} \, \mathrm{lim}_{z\to 1}\Big{(} -(1-t^{-2})(1-z) [\mathrm{Log}(\Omega(t,z,T))]^n\Big{)}.
    \]
    Here $\mathrm{Log}(-)$ is the plethystic logarithm, $[-]^i$ stands for taking the $T^i$-coefficient, and the limit in the right-hand side is well-defined.
\end{thm}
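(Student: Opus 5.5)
Theorem \ref{thm3.2} is quoted from \cite{Mellit}, and I would not reprove it from scratch. The plan is to follow the route of \cite{Sch} and \cite{Mellit}: first turn the Poincar\'e polynomial into a point count over finite fields, then quote Schiffmann's closed formula for that count, and finally use Mellit's symmetric-function identity to match it with the plethystic expression in the statement.

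\textbf{Step 1: from topology to point counts.} Work with the Dolbeault model $M^{\mathrm{Dol}}_{n,d}$, which is diffeomorphic to $M_{n,d}$, so $P(t)$ is the same for both. The scaling $\BC^*$-action on Higgs fields, together with properness of the Hitchin map, makes $M^{\mathrm{Dol}}_{n,d}$ semiprojective. Two consequences follow.
\begin{enumerate}
\item[(i)] The cohomology is pure.
\item[(ii)] By the Bia{\l}ynicki-Birula decomposition, the cohomology is of Hodge--Tate type, i.e.\ it has cohomology of the kind coming from affine pavings, up to the abelian factor.
\end{enumerate}
Spread the moduli space out over a finitely generated ring and apply Katz's theorem (the appendix to \cite{HRV}) together with purity. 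This shows that $P(t)$ is recovered from the polynomial count $\#M^{\mathrm{Dol}}_{n,d}(\mathbb{F}_q)$ by substituting $q=t^2$ and applying Poincar\'e duality on the noncompact manifold. That duality produces the normalizing power $t^{(2g-2)n^2+2}$ and the inversion $t\mapsto t^{-1}$ visible in $\BB_{a,\ell}$.

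\textbf{Step 2: Schiffmann's count.} For $(n,d)=1$, stable Higgs bundles coincide with semistable ones. Schiffmann \cite{Sch} expresses $\#M^{\mathrm{Dol}}_{n,d}(\mathbb{F}_q)$ as $q^{1+(g-1)n^2}$ times the number $A_{g,n}(q)$ of absolutely indecomposable vector bundles of rank $n$, degree $d$ on a curve over $\mathbb{F}_q$. He obtains this by Hall-algebra and Harder--Narasimhan recursions together with the geometry of the global nilpotent cone. He then gives a closed generating-series formula for $A_{g,n}$ as a sum over partitions involving Weil numbers of $C$. That formula is independent of $d$ and, after specializing the Weil numbers for the Poincar\'e polynomial, depends on $q$ only.

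\textbf{Step 3: Mellit's identity (main obstacle).} The heart of the matter is showing that Schiffmann's formula equals
\[
\lim_{z\to 1}\Big(-(1-t^{-2})(1-z)[\mathrm{Log}\,\Omega]^n\Big).
\]
Following \cite{Mellit}, I would rewrite both sides in the ring of symmetric functions, using modified Macdonald polynomials $\tilde H_\mu(X;q,z)$. Each product $\prod_{\Box}\BB_{a,\ell}$ should be read as the $g$-twisted inner product $\langle \tilde H_\mu,\tilde H_\mu\rangle$ weighted by the operator $\nabla$-type eigenvalues. Schiffmann's sum arises from a plethystic formula with the Cauchy kernel. Mellit proves equality by comparing both expressions after applying $\mathrm{Log}$, via shuffle-type identities and a generating function in an auxiliary variable that is later specialized. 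I expect this symmetric-function step to be the hardest part.

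Finally, one checks that the pole at $z=1$ cancels against the factor $(1-z)$, so the limit exists. This uses the observation that $[\mathrm{Log}\,\Omega]^n$ has at most a simple pole at $z=1$, coming from the cells with $a=0$ in the denominator.
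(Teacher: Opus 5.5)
Quoting Mellit, who builds on Schiffmann, is exactly what the paper does, so the citation itself is fine. But two steps of the justification you sketch around it are false.

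\textbf{Step~1 conflates the Betti and Dolbeault models.} The Bia{\l}ynicki-Birula decomposition of $M^{\mathrm{Dol}}_{n,d}$ has affine-bundle cells over smooth projective fixed loci. These loci are moduli of stable chains (e.g.\ $N_{n,d}$ itself), not points. So the decomposition gives purity, but not Hodge--Tate type: a nonzero pure Hodge structure of odd weight is never Tate, and $H^1(M_{n,d},\BQ)\cong H^1(\mathrm{Jac}\,C,\BQ)\neq 0$. Hodge--Tate type (Theorem~\ref{thm1.2}) belongs to the Betti model, which is not pure, and Katz's theorem applied there gives the $E$-polynomial, not $P(t)$.

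Correspondingly, $M^{\mathrm{Dol}}_{n,d}$ is not polynomial-count: $\#M^{\mathrm{Dol}}_{n,d}(\mathbb{F}_q)$ depends on the Weil numbers $\alpha_1,\dots,\alpha_{2g}$ of $C$ (for $n=1$ it is $q^g\prod_j(1-\alpha_j)$). So Katz's theorem does not apply, and substituting $q=t^2$ into a polynomial in $q$ can never produce odd-degree classes such as the factor $(1+t)^{2g}$.

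The bridge that works is the Weil-number specialization you only allude to in Step~2. Schiffmann's count is a universal polynomial $F(q;\alpha_1,\dots,\alpha_{2g})$, valid over every $\mathbb{F}_{q^r}$ with $q,\alpha_j$ replaced by $q^r,\alpha_j^r$. Combine this with purity of $H^i_c$ over $\overline{\mathbb{F}}_q$, the trace formula, and linear independence of the functions $r\mapsto\lambda^r$. These let you sort Frobenius eigenvalues by absolute value $q^{i/2}$, which gives $P_c(t)=F(t^2;-t,\dots,-t)$, plus a comparison of Betti numbers between characteristic $p\gg 0$ and $\BC$. Poincar\'e duality then yields $P(t)$. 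Duality alone contributes $t^{4+4(g-1)n^2}$; the prefactor $t^{(2g-2)n^2+2}$ is what survives after absorbing the factor $q^{1+(g-1)n^2}$ you quote from Schiffmann.

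\textbf{The existence of the limit is not a term-by-term observation.} In $\BB_{a,\ell}$ the only factor vanishing at $z=1$ is $z^{a+1}-t^{-2\ell}$, and it vanishes exactly when $\ell=0$, not $a=0$. Hence $\Omega_{\mu_1}\cdots\Omega_{\mu_k}$ has a pole at $z=1$ of order equal to its total number of columns, which can be as large as $n$ (e.g.\ $\Omega_{(n)}$ or $\Omega_{(1)}^n$). That $[\mathrm{Log}\,\Omega]^n$ nevertheless has at most a simple pole is a genuine cancellation inside the plethystic logarithm. The paper exhibits instances of it:
\begin{itemize}
\item in \eqref{Omega_B} with Lemma~\ref{lem3.6}, the double poles of $\Omega_{(1^{n-2}2)}$ and $\Omega_{(1^{n-1})}\Omega_{(1)}$ cancel down to a simple pole;
\item in the case $(g,n)=(2,4)$ of the proof of Theorem~\ref{thm3.1}, terms with $\frac{1}{(z^2-1)(z-1)}$ and $\frac{1}{(z-1)^2}$ combine.
\end{itemize}
This well-definedness is part of the content of Mellit's theorem. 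It should be cited together with the formula, not derived from the shape of the cells.
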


The plethystic logarithm of $\Omega(t,z,T)$ can be expressed explicitly in terms of $\Omega_i(t,z)$. We record the following lemma which is useful for computing $\mathrm{Log}(\Omega(t,z,T))$. Its proof follows from the definition of the plethystic logarithm.

\begin{lem}
    We have
    \begin{equation}\label{PLog}
    [\mathrm{Log}(\Omega(t,z,T))]^n = [\mathrm{log}(\Omega(t,z,T))]^n +\sum_{d|n,\; d>1} \frac{\mu(d)}{d} \psi_d\left( [\mathrm{log}(\Omega(t,z,T))]^{\frac{n}{d}} \right).
    \end{equation}
    Here $\mathrm{log}(-)$ is the ordinary logarithm, $\mu(-)$ is the M\"obius function, and $\psi_d(-)$ is the Adams operation replacing $t,z$ by $t^d, z^d$ respectively.
\end{lem}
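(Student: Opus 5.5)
This identity is a formal statement about power series in $T$, so the plan is to derive it purely from the definition of the plethystic logarithm as the inverse of the plethystic exponential. Recall the definition: for a series $F(t,z,T)=\sum_{k\geq 1}F_k(t,z)T^k$ with no constant term,
\[
\mathrm{Exp}(F) = \exp\Big(\sum_{r\geq 1}\frac{1}{r}\psi_r(F)\Big),
\]
where $\psi_r$ replaces $t,z,T$ by $t^r,z^r,T^r$. We have $\mathrm{Log}(\Omega)=F$ exactly when $\mathrm{Exp}(F)=\Omega$. Taking ordinary logarithms, this is equivalent to
\[
\log \Omega = \sum_{r\geq 1}\frac{1}{r}\psi_r(F).
\]

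The key step is Möbius inversion of this relation. Set $G:=\log\Omega$ and write $G_m=[G]^m$ and $F_m=[F]^m$. Comparing $T^m$-coefficients gives
\[
G_m=\sum_{r\mid m}\frac1r\psi_r(F_{m/r}),
\]
where $\psi_r$ now acts on $t,z$ only, since the $T$-part has already been accounted for in the exponent. The operators $\frac1r\psi_r$ compose multiplicatively: $\frac1r\psi_r\circ\frac1s\psi_s=\frac1{rs}\psi_{rs}$. Hence they form a representation of the multiplicative monoid of positive integers, and the relation is a Dirichlet-type convolution. I then claim
\[
F_n=\sum_{d\mid n}\frac{\mu(d)}{d}\psi_d(G_{n/d}).
\]
To verify the claim, substitute the formula for $G_{n/d}$ into the right-hand side. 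This yields
\[
\sum_{d\mid n}\sum_{r\mid n/d}\frac{\mu(d)}{dr}\psi_{dr}(F_{n/(dr)}).
\]
Now group the terms by $e=dr$. The coefficient of $\frac1e\psi_e(F_{n/e})$ is $\sum_{d\mid e}\mu(d)$, which vanishes unless $e=1$. So the sum collapses to $F_n$.

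Finally, separate the $d=1$ term, which is $[\log\Omega]^n$. The remaining terms are exactly the ones in \eqref{PLog}. Some care is needed with well-definedness: $\Omega$ has constant term $1$, so $\log\Omega$ is a well-defined series in $T$ whose coefficients are rational functions in $t,z$, and the Adams operations act on these rational functions. There is no real obstacle here. The only point to check is the convention that $\psi_d$ in \eqref{PLog} acts only on $t,z$ once the $T^n$-coefficient has been extracted; this is consistent with the substitution $T\mapsto T^d$ having shifted the coefficient index from $n/d$ to $n$.
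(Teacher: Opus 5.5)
Your proposal is correct and takes the same approach as the paper. The paper only remarks that the identity follows from the definition of the plethystic logarithm, and your M\"obius inversion of $\log\Omega=\sum_{r\geq 1}\frac1r\psi_r(\mathrm{Log}\,\Omega)$, using $\frac1r\psi_r\circ\frac1s\psi_s=\frac1{rs}\psi_{rs}$, is exactly that unpacking.
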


Expanding the ordinary logarithm
\[
\mathrm{log}(\Omega(t,z,T)) = (\Omega(t,z,T) -1) -\frac{1}{2}(\Omega(t,z,T)-1)^2 +\frac{1}{3} (\Omega(t,z,T)-1)^3 - \cdots,
\]
we obtain the \emph{first term} of (\ref{PLog}) to be
\begin{equation}\label{exp_log}
[\mathrm{log}(\Omega(t,z,T))]^n = \sum_{k\geq 1} \frac{(-1)^{k+1}}{k} \sum_{n_1 +n_2+ \cdots + n_k=n} \Omega_{n_1}(t,z)\Omega_{n_2}(t,z) \cdots \Omega_{n_k}(t,z). 
\end{equation}
Each term in the right-hand side can further be expressed in terms of 
\begin{equation}\label{Omega_terms}
\Omega_{\mu_1}(t,z)\Omega_{\mu_2}(t,z)\cdots \Omega_{\mu_k}(t,z), \quad \mu_i \vdash n_i, \quad  \sum_{i=1}^k n_i = n.
\end{equation}
Using this expression, there is a \emph{principal term} in the right-hand side of Theorem \ref{thm3.2} contributed by the single column partition $(1^n) \vdash n$ through (\ref{Omega_terms}), (\ref{exp_log}), and the first term in the right-hand side of (\ref{PLog}):
\[
P^{\mathrm{prin}}(t) :=t^{(2g-2)n^2+2} \, \mathrm{lim}_{z\to 1}\Big{(} -(1-t^{-2})(1-z) \Omega_{(1^n)}(t,z)\Big{)}.
\]

We observe that the principal term is matched exactly with $P^\mathrm{free}(t)$.

\begin{prop}
We have
\[
P^{\mathrm{prin}}(t)  = P^{\mathrm{free}}(t).
\]
\end{prop}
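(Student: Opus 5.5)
The plan is a direct computation of $\Omega_{(1^n)}(t,z)$, which I expect to close up as an identity of rational functions in $t$. Both sides are then expanded in $t$ in the same way as in Theorem \ref{thm3.2}.

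\emph{Step 1: arms and legs.} For the single column partition $(1^n)$, every cell $\Box$ has arm $a(\Box)=0$. The legs run through $\ell=0,1,\dots,n-1$, each value occurring exactly once. Hence
\[
\Omega_{(1^n)}(t,z)=\prod_{\ell=0}^{n-1}\BB_{0,\ell}(t,z)=\prod_{\ell=0}^{n-1}\frac{(z+t^{-2\ell-1})^g(1+t^{-2\ell-1})^g}{(z-t^{-2\ell})(1-t^{-2\ell-2})}.
\]

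\emph{Step 2: the limit $z\to 1$.} The only factor singular at $z=1$ is the denominator $(z-t^{0})=(z-1)$ coming from $\ell=0$. The $\ell=0$ factor of the denominator is $(z-1)(1-t^{-2})$. Therefore
\[
-(1-t^{-2})(1-z)\cdot\frac{1}{(z-1)(1-t^{-2})}=1.
\]
Every other factor is regular at $z=1$, so the limit exists and is obtained by setting $z=1$ in the remaining terms. This gives
\[
\lim_{z\to1}\Big(-(1-t^{-2})(1-z)\Omega_{(1^n)}(t,z)\Big)=(1+t^{-1})^{2g}\prod_{\ell=1}^{n-1}\frac{(1+t^{-2\ell-1})^{2g}}{(1-t^{-2\ell})(1-t^{-2\ell-2})}.
\]

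\emph{Step 3: clearing negative powers and tracking the exponent.} Set $i=\ell+1$, so $2\le i\le n$. Then:
\begin{itemize}
\item $(1+t^{-1})^{2g}=t^{-2g}(1+t)^{2g}$;
\item $(1+t^{-(2i-1)})^{2g}=t^{-2g(2i-1)}(1+t^{2i-1})^{2g}$;
\item $(1-t^{-(2i-2)})(1-t^{-2i})=t^{-(4i-2)}(1-t^{2i-2})(1-t^{2i})$.
\end{itemize}
The two signs in the last item cancel. So the $i$-th factor equals $t^{-(2g-2)(2i-1)}$ times the $i$-th factor of (\ref{P_free}).

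Using $\sum_{i=2}^n(2i-1)=n^2-1$, the total power of $t$ in front of $P^{\mathrm{free}}(t)$ is
\[
(2g-2)n^2+2-2g-(2g-2)(n^2-1)=0.
\]
This yields $P^{\mathrm{prin}}(t)=P^{\mathrm{free}}(t)$.

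\emph{Main obstacle.} There is no real difficulty here. The one point needing care is interpreting both sides as the same element, namely the same rational function expanded as a power series in $t$. One must also check that the cancellation of the pole at $z=1$ happens only in the $\ell=0$ factor. This holds because $t^{-2\ell}\neq 1$ for $\ell\ge1$.
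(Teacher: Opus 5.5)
Your proposal is correct and follows essentially the same route as the paper. You write $\Omega_{(1^n)}$ as $\prod_{\ell=0}^{n-1}\BB_{0,\ell}$, cancel the simple pole of $\BB_{0,0}$ at $z=1$ against the prefactor, set $z=1$ in the remaining factors, and clear negative powers of $t$; your explicit check $(2g-2)n^2+2-2g-(2g-2)(n^2-1)=0$ spells out the exponent bookkeeping that the paper leaves implicit.
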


\begin{proof}
    For the column partition $(1^n)$, the $n$ boxes associated with it satisfy
    \[
    (a(\Box), \ell(\Box)) = (0,0),\; (0,1), \; \dots, \: (0,n-1).
    \]
    By definition, we have
    \[
    \Omega_{(1^n)}(t,z) = \BB_{0,0}(t,z)\BB_{0,1}(t,z)\cdots \BB_{0,n-2}(t,z) \BB_{0,n-1}(t,z).
    \]
    Only the term $\BB_{0,0}(t,z)$ has a simple pole at $z=1$. Hence
    \begin{align*}
P^{\mathrm{prin}}(t)
&=
t^{(2g-2)n^2+2}(1-t^{-2})
\left[(z-1)\BB_{0,0}(t,z)\right]_{z=1}
\prod_{\ell=1}^{n-1}\BB_{0,\ell}(t,1)\\
&=
t^{(2g-2)n^2+2}(1+t^{-1})^{2g}
\prod_{\ell=1}^{n-1}
\frac{(1+t^{-2\ell-1})^{2g}}
     {(1-t^{-2\ell})(1-t^{-2\ell-2})}\\
&=
(1+t)^{2g}
\prod_{i=2}^{n}
\frac{(1+t^{2i-1})^{2g}}
     {(1-t^{2i-2})(1-t^{2i})}\\
&=P^{\mathrm{free}}(t). \qedhere
\end{align*}
\end{proof}

If we write the HRV formula as the principal and the correction terms:
\[
P(t) = P^{\mathrm{prin}}(t) + P^{\mathrm{corr}}(t),
\]
it suffices to show that the correction term $P^{\mathrm{corr}}(t)$ satisfies
    \begin{equation}\label{goal_sec3.3}
    P^{\mathrm{corr}}(t) =- t^{r_{g,n}} + O(t^{r_{g,n}+1}).
    \end{equation}
We complete this in the next section.

\subsection{Proof of Theorem \ref{thm3.1}}\label{sec3.3}

In view of the right-hand side of the formula (\ref{PLog}), the correction term $P^{\mathrm{corr}}(t)$ is contributed by 2 parts,
\[
P^{\mathrm{corr}}(t) = P_1^{\mathrm{corr}}(t) + P_2^{\mathrm{corr}}(t),
\]
where the first is the contribution of the ordinary logarithm and the second is the contribution of the summation over $d|n$ with $d>1$.

We first assume $(g,n)\neq (2,4)$, and prove the estimates
\begin{align}
P^{\mathrm{corr}}_1(t) 
    &= -t^{r_{g,n}} + O\bigl(t^{r_{g,n}+1}\bigr),
    \label{corr1}\\
P^{\mathrm{corr}}_2(t)
    &= O\bigl(t^{r_{g,n}+1}\bigr).
    \label{corr2}
\end{align}
These estimates immediately imply \eqref{goal_sec3.3}. We handle the exceptional case $(g,n)=(2,4)$ at the end; see the part C.
\medskip

{\noindent \bf A. Proof of (\ref{corr1}).} 

\medskip

\noindent For an element $f(t,z) \in  \BQ(\! (z-1) \!)(\!(t)\!)$, we define
\[
\mathrm{deg}^t(f(t,z)) \in \BZ
\]
to be its lowest nontrivial $t$-degree. 

Now we consider $\BB_{a,\ell}(t,z)$ which can be viewed as an element in $\BQ(\! (z-1) \!)(\!(t)\!)$.  By definition we have
\[
\BB_{a,\ell}(t,z) = t^{-(2g-2)(2\ell+1)} \overline{\BB}_{a,\ell},
\]
where $\overline{\BB}_{a,\ell}$ is regular on the variable $t$ and has at worst a simple pole at $z=1$. For example
\[
\BB_{1,0}(t,z) = t^{-(2g-2)} \overline{\BB}_{1,0}, \quad      \overline{\BB}_{1,0} =  \frac{1}{(1-z)(1+z)}\cdot \frac{(1+z^2t)^g(1+zt)^g}{1-zt^2}.
\]
In particular, we have
\[
\mathrm{deg}^t ( \BB_{a,\ell}(t,z)) = -(2g-2)(2\ell +1).
\]
Therefore, for a given partition $\mu \vdash n$, we have
\begin{equation}\label{t-deg}
\mathrm{deg}^t(\Omega_{\mu}(t,z)) = -(2g-2)\iota(\mu), \quad \iota(\mu): =\sum_{\Box \; \in \; \mu}(2\ell(\Box)+1).
\end{equation}


\begin{lem}\label{lem3.5}
    Assume that $\mu$ runs through all partitions of $n$.
    \begin{enumerate}
        \item[(a)]  The largest possible value of $\iota(\mu)$ is $n^2$, which is attained uniquely by the single column partition $(1^n)$. 
        \item[(b)] The second largest value of $\iota(\mu)$ is $(n-1)^2+1$, which is attained uniquely by the hook partition $(1^{n-2}2)$.
    \end{enumerate} 
\end{lem}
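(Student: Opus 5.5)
The plan is to rewrite $\iota(\mu)$ as a sum of squares of column lengths and then solve an elementary extremal problem. Throughout, I use the convention forced by the computation for $(1^n)$ above: there the legs of the $n$ boxes are $0,1,\dots,n-1$. So the leg of a box is the number of boxes below it in its column.

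\textbf{Step 1: rewrite $\iota(\mu)$.} Let $c_1\geq c_2\geq \cdots \geq c_m$ be the column lengths of $\mu$, so that $\sum_j c_j = n$. In a column of length $c$ the legs are exactly $0,1,\dots,c-1$. That column therefore contributes
\[
\sum_{\ell=0}^{c-1}(2\ell+1)=c^2
\]
to $\iota(\mu)$. Hence
\[
\iota(\mu)=\sum_{j=1}^m c_j^2 ,
\]
and $\iota$ depends only on the multiset of column lengths. Since $\mu$ is determined by its column lengths, it remains to maximize $\sum_j c_j^2$ over multisets of positive integers summing to $n$.

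\textbf{Step 2: part (a).} Using $c_j\leq c_1$ for all $j$, we get
\[
\sum_j c_j^2\leq c_1\sum_j c_j=c_1 n\leq n^2 .
\]
Equality forces $c_1=n$, hence a single column. So the maximum $n^2$ is attained uniquely by $(1^n)$.

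\textbf{Step 3: part (b).} Now suppose $\mu\neq(1^n)$, so $c_1\leq n-1$. Split into two cases.
\begin{itemize}
\item If $c_1=n-1$, the one remaining box forms a second column of length $1$. This gives $\mu=(1^{n-2}2)$ and $\iota(\mu)=(n-1)^2+1$.
\item If $c_1\leq n-2$, the same estimate as in Step 2 gives $\iota(\mu)\leq c_1 n\leq n^2-2n$. This is strictly less than $(n-1)^2+1=n^2-2n+2$.
\end{itemize}
Hence the second largest value is $(n-1)^2+1$, attained uniquely by the hook $(1^{n-2}2)$.

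The only point requiring care is the leg convention, which must match the one implicit in the computation of $\Omega_{(1^n)}$. With the transposed convention, the same argument applies with rows in place of columns.
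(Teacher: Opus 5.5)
Your proof is correct and follows the same route as the paper: you rewrite $\iota(\mu)$ as the sum of squared column lengths, which is the paper's $\|\mu'\|^2$ for the conjugate partition $\mu'$, and then identify the extremal partitions. The only difference is that you justify the extremal step explicitly through the bound $\sum_j c_j^2\leq c_1 n$, whereas the paper simply asserts which conjugate partitions give the largest and second largest norms.
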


\begin{proof}
    The easiest way to see both properties is to use the conjugate partition $\mu'$ of $\mu$. Since a column of the partition $\mu$ of height $k$ contributes 
    \[
    1+3+ \cdots + (2k-1) = k^2
    \]
to $\iota(\mu)$, we see that $\iota(\mu)$ is exactly calculating the square of the norm of the conjugate partition
\[
\left\| \mu' \right\|^2 = \iota(\mu). 
\]
When we fix the size, the largest and the second largest norms are given by the partitions $(n)$ and $(1\cdot (n-1))$ respectively; this proves the lemma.
\end{proof}

We consider the expressions (\ref{exp_log}) and (\ref{Omega_terms}). The following estimate is useful for our purpose.

\begin{lem}\label{lem3.51}
    Consider the product (\ref{Omega_terms}). If it is not one of the following 
   \[
   \Omega_{(1^n)}(t,z), \quad \Omega_{(1^{n-2}2)}(t,z), \quad \Omega_{(1^{n-1})}(t,z) \Omega_{(1)}(t,z),
   \]
   then we have
\[
 \iota(\mu_1) +\iota(\mu_2) + \cdots +\iota(\mu_k) \leq 
\begin{cases}
n^2-4n+8, & n\geq 4; \\
3, &  n=3. 
\end{cases}
\]
\end{lem}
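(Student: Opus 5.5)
Since $\iota(\mu)=\|\mu'\|^2$ by the proof of Lemma \ref{lem3.5}, the quantity to bound is
\[
S:=\sum_{i=1}^k \|\mu_i'\|^2 ,
\]
which is the squared norm of the concatenated multiset of column heights of all $\mu_i$. These column heights form a composition-like list of positive integers $h_1,\dots,h_m$ with $\sum h_j=n$. The plan is to split according to the largest column height $h:=\max_j h_j$ and maximize $\sum h_j^2$ under the constraint $\sum h_j=n$, using convexity: for fixed $h$, the sum of squares is largest when the remaining mass $n-h$ is concentrated in as few columns as possible.

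\emph{Case $h=n$.} Then there is a single column, so $k=1$ and $\mu_1=(1^n)$, which is excluded.

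\emph{Case $h=n-1$.} The remaining mass $1$ forms a single column of height $1$. Either it lies in the same partition, giving $\mu_1=(1^{n-2}2)$, or it lies in a different one, giving $\Omega_{(1^{n-1})}\Omega_{(1)}$. Here the ordering of the factors does not matter for the claim, since any ordering of this product is one of the excluded cases. Hence both possibilities are excluded.

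\emph{Case $h\le n-2$.} Since $x\mapsto x^2$ is convex, the sum $\sum h_j^2$ subject to $h_j\le n-2$ and $\sum h_j=n$ is maximized by taking one column of height $n-2$ and one of height $2$. This gives
\[
S\le (n-2)^2+4=n^2-4n+8 .
\]
The maximizing configuration requires $n-2\ge 2$, i.e.\ $n\ge 4$; for smaller $h$ the bound only decreases. To justify the extremal claim rigorously, I would note that if two columns satisfy $h_a\ge h_b$ and $h_a<n-2$, then moving one box from $b$ to $a$ strictly increases the sum of squares. Iterating these moves reaches $(n-2,2)$, or $(n-2,1,1)$, whose value $(n-2)^2+2$ is smaller.

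\emph{Case $n=3$.} Here $h\le n-2=1$, so all columns have height $1$ and $S=3$. This matches the stated bound for $n=3$.

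There is no real obstacle in this argument. The main point of care is ensuring that the case $h=n-1$ exhausts exactly the excluded configurations, which follows because only one extra box remains once a column of height $n-1$ is present.
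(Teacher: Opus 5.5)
Your proof is correct, but it is organized differently from the paper's. The paper splits according to the number $k$ of factors. For $k=1$ it reads off the third-largest value of $\|\mu'\|^2$, attained by the conjugate $(n-2,2)$. For $k>1$ it uses $\iota(\mu_i)\le|\mu_i|^2$ from Lemma~\ref{lem3.5}(a) and maximizes $\sum_i|\mu_i|^2$ after discarding the size pattern $\{1,n-1\}$. It handles $n=3$ by listing cases.

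You instead pool the column heights of all factors into one list and split on the largest height. This treats every $k$ at once and absorbs $n=3$ into the same argument. It also makes the convexity step explicit through the box-moving exchange, whereas the paper asserts both of its extremal claims without proof. The paper's route is shorter because it leans on Lemma~\ref{lem3.5}.

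Your case $h=n-1$ also pins down exactly the excluded products. The paper's $k>1$ step discards every product with sizes $\{1,n-1\}$, including $\Omega_{(1)}\Omega_{\mu}$ with $\mu\vdash n-1$, $\mu\neq(1^{n-1})$, and never checks these separately. They do satisfy the bound, since $\iota\le 1+(n-2)^2+1<n^2-4n+8$ by Lemma~\ref{lem3.5}(b). Your argument covers them automatically.

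One small inaccuracy: for $n\ge 4$, $(n-2,1,1)$ is not a terminal state of your exchange process, because a box can still be moved between the two height-one columns. The process always ends at $(n-2,2)$. This does not affect the bound.
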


\begin{proof}
We first assume $n\geq 4$. 

If $k=1$, then Lemma \ref{lem3.5} gives the largest and the second largest values of $\iota(\mu_1)$, which are both achieved in the cases we exclude. By the proof of Lemma \ref{lem3.5}, the next largest one is
    \[
    \iota(\mu) = 2^2 +(n-2)^2 = n^2-4n+ 8
    \]
    achieved by $(1^{n-4}2^2)$ whose conjugate partition is $(2\cdot (n-2))$.

    If $k>1$, then by Lemma \ref{lem3.5}(a) we have
    \[
    \sum_{i=1}^k \iota(\mu_i) \leq  \sum_{i=1}^k |\mu_i|^2.
    \]
    After excluding the case $k=2$ and $(|\mu_1|, |\mu_2|) = (1,n-1)$, the maximum of the right-hand side under the constraint $\sum_i |\mu_i| = n$ is 
    \[
2^2+    (n-2)^2  = n^2 -4n +8.
    \]
    The proof is complete for $n\geq 4$.

    When $n=3$, the answer is obvious by listing all the possible cases.
\end{proof}

By Lemmas \ref{lem3.5} and \ref{lem3.51}, the lowest $t$-degree among all the terms (\ref{Omega_terms}) is attained by
\[
\mathrm{deg}^t(\Omega_{(1^n)}(t,z)) = -(2g-2)n^2;
\]
this yields exactly $P^{\mathrm{prin}}(t)$. The second lowest $t$-degree is
\begin{equation}\label{t-degree}
-(2g-2)((n-1)^2 +1)
\end{equation}
which can be attained by two terms simultaneously:
\begin{equation}\label{contribution_Omega}
\Omega_{(1^{n-2}2)}(t,z), \quad \Omega_{(1^{n-1})}(t,z) \Omega_{(1)}(t,z).
\end{equation}
The contribution of the two terms (\ref{contribution_Omega}) to $\mathrm{log}(\Omega(t,z,T))$ is
\begin{equation}\label{Omega_B}
\Omega_{(1^{n-2}2)}(t,z) - \Omega_{(1^{n-1})}(t,z) \Omega_{(1)}(t,z) = \BB^2_{0,0}(t,z) \prod_{i=1}^{n-3} \BB_{0,i}(t,z)\left( \BB_{1,n-2}(t,z) -  \BB_{0,n-2}(t,z) \right).
\end{equation}

Interestingly, due to some mysterious cancelation, the lowest $t$-degree of (\ref{Omega_B}) is higher than expected, as we calculate in the following lemma.

\begin{lem}\label{lem3.6}
    We have
    \[
     \BB_{1,n-2}(t,z) -  \BB_{0,n-2}(t,z)  = (z-1)\BB_{0,n-2}(t,1)(t^{2n-4} + O(t^{2n-3})) + O((z-1)^2).
    \]
\end{lem}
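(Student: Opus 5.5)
We prove the expansion by working with the explicit form of $\BB_{a,\ell}(t,z)$. Both $\BB_{1,n-2}$ and $\BB_{0,n-2}$ are regular at $z=1$ because $\ell=n-2\geq 1$, so the denominators do not vanish there. Setting $z=1$ in the definition gives
\[
\BB_{1,n-2}(t,1)=\BB_{0,n-2}(t,1)=\frac{(1+t^{-2\ell-1})^{2g}}{(1-t^{-2\ell})(1-t^{-2\ell-2})}.
\]
Hence the difference vanishes at $z=1$, and it is enough to compute $\partial_z\big(\BB_{1,\ell}-\BB_{0,\ell}\big)|_{z=1}$ and show that it equals $\BB_{0,\ell}(t,1)\,(t^{2n-4}+O(t^{2n-3}))$.

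To compute this derivative we take logarithmic derivatives. Put $s=t^{-1}$ and $\ell=n-2$. Then
\[
\partial_z\log \BB_{a,\ell}\big|_{z=1}=g\frac{a+1}{1+s^{2\ell+1}}+g\frac{a}{1+s^{2\ell+1}}-\frac{a+1}{1-s^{2\ell}}-\frac{a}{1-s^{2\ell+2}}.
\]
The two functions have the same value at $z=1$, so
\[
\partial_z(\BB_{1,\ell}-\BB_{0,\ell})|_{z=1}=\BB_{0,\ell}(t,1)\,\Big(\partial_z\log\BB_{1,\ell}-\partial_z\log\BB_{0,\ell}\Big)\Big|_{z=1}.
\]
The difference of logarithmic derivatives is the $a$-coefficient:
\[
\frac{2g}{1+s^{2\ell+1}}-\frac{1}{1-s^{2\ell}}-\frac{1}{1-s^{2\ell+2}}.
\]

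Next we rewrite each term in $t$. Multiplying numerator and denominator by the appropriate power of $t$ gives
\[
\frac{1}{1+s^{2\ell+1}}=\frac{t^{2\ell+1}}{1+t^{2\ell+1}}=t^{2\ell+1}+O(t^{4\ell+2}),
\]
\[
\frac{1}{1-s^{2\ell}}=-\frac{t^{2\ell}}{1-t^{2\ell}}=-t^{2\ell}+O(t^{4\ell}),
\qquad
\frac{1}{1-s^{2\ell+2}}=-t^{2\ell+2}+O(t^{4\ell+4}).
\]
The factor in parentheses is therefore
\[
t^{2\ell}+2g\,t^{2\ell+1}+\cdots=t^{2n-4}+O(t^{2n-3}).
\]
This leading term comes from $+t^{2\ell}$, whose coefficient is $1$, so nothing cancels at the lowest order. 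That gives the coefficient of $(z-1)$ claimed in the lemma.

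The step that needs care is the remainder $O((z-1)^2)$. We have to check that it is harmless in the ring $\BQ(\!(z-1)\!)(\!(t)\!)$, that is, that the Taylor expansion in $(z-1)$ is compatible with the $t$-adic expansions. This holds because both functions are regular at $z=1$ and are rational in $z$ and $t$, so they expand as power series in $(z-1)$ with coefficients that are Laurent series in $t$. The main obstacle is bookkeeping: we must correctly identify the $a$-dependence of each factor in $\BB_{a,\ell}$ and expand $1/(1-s^{k})$ in positive powers of $t$ with the right sign. The sign of the leading coefficient $t^{2n-4}$ is what ultimately produces the $-t^{r_{g,n}}$ in (\ref{corr1}).
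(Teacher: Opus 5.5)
Your proposal is correct and follows essentially the same route as the paper: you observe that $\BB_{1,n-2}(t,1)=\BB_{0,n-2}(t,1)$, expand to first order in $z-1$ via logarithmic derivatives, and arrive at the same bracket $\frac{t^{2n-4}}{1-t^{2n-4}}+\frac{2g\,t^{2n-3}}{1+t^{2n-3}}+\frac{t^{2n-2}}{1-t^{2n-2}}$, whose leading term is $t^{2n-4}$. Your added remarks on regularity at $z=1$ (using $\ell=n-2\geq 1$) and on the compatibility of the $(z-1)$- and $t$-expansions make explicit a point the paper leaves implicit.
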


\begin{proof}
  It is clear by definition that
\[
 \BB_{1,n-2}(t,1) = \BB_{0,n-2}(t,1).
\]
The first order calculation with respect to the variable $z-1$ gives
\begin{align*}
     \BB_{1,n-2}(t,z) - \BB_{0,n-2}(t,z) &= (z-1)\BB_{0,n-2}(t,1)\Big{(} \partial_z\; \mathrm{log} \BB_{1,n-2}(t,z)|_{z=1} - \partial_z\; \mathrm{log} \BB_{0,n-2}(t,z)|_{z=1}     \Big{)} \\ &  + O((z-1)^2) \\
     & = (z-1)\BB_{0,n-2}(t,1)\left( \frac{t^{2n-4}}{1-t^{2n-4}} + \frac{2gt^{2n-3}}{1+t^{2n-3}} + \frac{t^{2n-2}}{1-t^{2n-2}} \right) \\ & 
     + O((z-1)^2). 
\end{align*}
The lemma follows by the fact that the sum of the three terms in the bracket has the leading term $t^{2n-4}$.
\end{proof}

Combining (\ref{Omega_B}), Lemma \ref{lem3.6}, and Theorem \ref{thm3.2}, the $t$-degree in $P^{\mathrm{corr}}_1(t)$ contributed by (\ref{contribution_Omega}) is 
\begin{equation}\label{total_t_degree}
\Big{(}(2g-2)n^2+2 \Big{)} - 2 - \Big{(}(2g-2)((n-1)^2+1)\Big{)} + \Big{(}2n-4\Big{)} = r_{g,n}
\end{equation}
where the first term is given by the pre-factor $t^{(2g-2)n^2+2}$ of Theorem \ref{thm3.2}, the second term $-2$ is given by the pre-factor $(1-t^{-2})$, the third term is given by (\ref{t-degree}), and the fourth term is given by Lemma \ref{lem3.6}. By a direct calculation, the coefficient of $t^{r_{g,n}}$ contributed by (\ref{contribution_Omega}) is $-1$ where the sign essentially comes from the sign in the formula of Theorem \ref{thm3.2}.


To complete the proof of (\ref{corr1}), we still need to check that any other terms of the form (\ref{Omega_terms}) cannot contribute to $P^{\mathrm{corr}}_1(t)$ in degrees $\leq r_{g,n}$.

Now, under the assumption $(g,n)\neq (2,4)$, we have
\begin{equation}\label{inequal_A}
\Big{(}(2g-2)n^2 +2 \Big{)} -2 - \Big{(} (2g-2) \sum_i \iota(\mu_i) \Big{)} > r_{g,n}.
\end{equation}
This is an elementary exercise using Lemma \ref{lem3.51}. We conclude that the lowest degree term in $P_1^{\mathrm{corr}}(t)$ is given by (\ref{contribution_Omega}); the proof of (\ref{corr1}) is complete. \qed

\begin{rmk}
    When $(g,n)=(2,4)$, (\ref{inequal_A}) is an equality. We will handle this exceptional case in the part C.
\end{rmk}

\medskip

{\noindent \bf B. Proof of (\ref{corr2}).}
\medskip

\noindent The proof of (\ref{corr2}) is completely parallel. We need to analyze the contribution of 
\[
\psi_d\left( [\mathrm{log}(\Omega(t,z,T))]^{\frac{n}{d}} \right), \quad d|n,~~ d>1;
\]
it is governed by the $t$-expansion of 
\[
\psi_d\left( \Omega_{\mu_1}(t,z)\Omega_{\mu_2}(t,z) \cdots \Omega_{\mu_k}(t,z)\right), \quad |\mu_1|+|\mu_2|+ \cdots + |\mu_k|=\frac{n}{d}.
\]
By the definition of the Adams operation, we have
\begin{equation}\label{adam_Omega}
\psi_d\left( \Omega_{\mu_1}(t,z)\Omega_{\mu_2}(t,z) \cdots \Omega_{\mu_k}(t,z)\right) =  \Omega_{\mu_1}(t^d,z^d)\Omega_{\mu_2}(t^d,z^d) \cdots \Omega_{\mu_k}(t^d,z^d).
\end{equation}
By (\ref{t-deg}), the $t$-degree of (\ref{adam_Omega}) is
\[
-(2g-2)d \left(\iota(\mu_1) +\iota(\mu_2)+ \cdots + \iota(\mu_k) \right).
\]
The same discussion as in part A implies that its minimum for fixed $d$ is attained when $k=1$ and $\mu_1 = (1^{\frac{n}{d}})$; in this case 
\begin{equation}\label{t_deg_Adam}
\mathrm{deg}^t\left(\Omega_{(1^{\frac{n}{d}})}(t^d, z^d)\right) = -(2g-2)d \left(\frac{n}{d}\right)^2.
\end{equation}

Hence, as we calculated in (\ref{total_t_degree}), the lowest possible $t$-degree of $P_2^{\mathrm{corr}}(t)$ is
\begin{align*}
    \Xi_{g,n,d}&:= \Big{(} (2g-2)n^2+2 \Big{)} - 2 -\Big{(} (2g-2)d \left(\frac{n}{d}\right)^2  \Big{)}
\end{align*}
where the first two terms are given by the pre-factors in the formula of Theorem \ref{thm3.2} and the third term is given by (\ref{t_deg_Adam}). A direct calculation gives 
\[
\Xi_{g,n,d} = (2g-2)n^2\left(1-\frac{1}{d} \right).
\]
If we allow $d$ to vary, the smallest value of $\Xi_{g,n,d}$ is clearly attained when $d$ is the smallest prime factor of $n$.

\begin{lem}\label{lem3.7}
    For $g\geq 2, n\geq 3$, we have
    \[
    \Xi_{g,n,d} \geq r_{g,n}.
    \]
    The equality is attained if and only if $(g,n,d)= (2,4,2)$.
\end{lem}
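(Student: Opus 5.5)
We prove Lemma \ref{lem3.7} by a direct elementary comparison of the two closed expressions
\[
\Xi_{g,n,d} = (2g-2)n^2\Bigl(1-\tfrac{1}{d}\Bigr), \qquad r_{g,n} = 4g(n-1)-2n .
\]
Since $\Xi_{g,n,d}$ is increasing in $d$, it suffices to treat the smallest admissible value $d=2$; in that case $n$ must be even, so $n\geq 4$. If $d\geq 3$, then $\Xi_{g,n,d}\geq \Xi_{g,n,2}$ formally, and we will in fact get a strict inequality there.

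For $d=2$ we have $\Xi_{g,n,2}=(g-1)n^2$, and we need to show
\[
(g-1)n^2 - 4g(n-1) + 2n \;\geq\; 0 .
\]
The plan is to expand the left-hand side as $g(n^2-4n+4) - n^2 + 2n$, which equals
\[
g(n-2)^2 - n(n-2) = (n-2)\bigl(g(n-2)-n\bigr).
\]
For $n\geq 4$ the factor $n-2$ is positive, and
\[
g(n-2)-n \;\geq\; 2(n-2)-n \;=\; n-4 \;\geq\; 0 .
\]
Equality forces both $g=2$ and $n=4$, which gives the equality case $(2,4,2)$.

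For $d\geq 3$ (so $n\geq 3$), we have $1-\tfrac1d\geq \tfrac23$, hence $\Xi_{g,n,d}\geq \tfrac{2}{3}(2g-2)n^2$. We then need
\[
\tfrac43 (g-1)n^2 > 4g(n-1)-2n .
\]
Multiplying by $3$, the difference of the two sides becomes
\[
4(g-1)n^2 - 12g(n-1) + 6n \;=\; 4g(n^2-3n+3) - 4n^2 + 6n .
\]
Using $g\geq 2$, this is at least $4n^2 - 18n + 24$. The discriminant of this quadratic is $324-384<0$, so it is positive for all $n$. This gives strict inequality whenever $d\geq 3$.

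The only mildly delicate point is bookkeeping the equality case. Equality requires the minimal $d=2$ (the bound for $d\geq 3$ is strict), together with $g=2$ and $n=4$; all other inequalities above are strict. Nothing deeper is needed, since the lemma is purely arithmetic once the formula for $\Xi_{g,n,d}$ has been established above.
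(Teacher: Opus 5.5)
Your proof is correct and is essentially the paper's argument. You use the same factorization $\Xi_{g,n,2}-r_{g,n}=(n-2)\bigl(g(n-2)-n\bigr)$ for $d=2$ and the same bound $1-\tfrac1d\geq\tfrac23$ for $d\geq 3$; the differences are that you split by $d=2$ versus $d\geq 3$ rather than by the parity of $n$ (which, as your split does, covers even $n$ with $d\geq3$), and that you actually check the inequality $\tfrac43(g-1)n^2>4g(n-1)-2n$ that the paper only calls elementary.
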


\begin{proof}
    If $n$ is odd, then $d\geq 3$, and 
    \[
    \Xi_{g,n,d} \geq (2g-2)n^2\left(1-\frac{1}{3}\right) >  4g(n-1)-2n = r_{g,n}
    \]
    where the second inequality is elementary. 

    When $n$ is even, we have $\Xi_{g,n,d} \geq \Xi_{g,n,2}$, and
    \[
    \Xi_{g,n,2} - r_{g,n} = (n-2)(g(n-2)-n).
    \]
   Since $n\geq 4$ and $g\geq 2$, the right-hand side is always $\geq 0$, and it is $0$ only if $g=2, n=4$.
\end{proof}

Under the assumption $(g,n) \neq (2,4)$, Lemma \ref{lem3.7} shows that $P_2^{\mathrm{corr}}(t)$ does not have any nontrivial term in degree $\leq r_{g,n}$. This completes the proof of (\ref{corr2}). \qed

\medskip

{\bf \noindent C. The exceptional case $(g,n)=(2,4)$.}

\medskip

\noindent We prove Theorem \ref{thm3.1} in the last remaining case. Beyond the contribution of (\ref{contribution_Omega}) whose lowest degree term gives exactly $-t^{r_{2,4}}$ in $P^{\mathrm{corr}}(t)$ as we discussed in the parts A and B, there are three extra terms for $(g,n)=(2,4)$:
\[
\Omega_{(2^2)}(t,z), \quad -\frac{1}{2}(\Omega_{(1^2)}(t,z))^2, \quad -\frac{1}{2}\psi_2(\Omega_{(1^2)}(t,z)).
\]
All these terms contribute to $t^{r_{2,4}}$ in $P^{\mathrm{corr}}(t)$. 

However, although each term contributes nontrivially to the coefficient $t^{r_{2,4}}$, their sum is $0$. More precisely, we list the leading rational factor of each term that contributes to the coefficient of $t^{r_{2,4}}$:
\[
\frac{1}{(z^2-1)(z-1)}, \quad -\frac{1}{2(z-1)^2}, \quad \frac{1}{2(z^2-1)}.
\]
It is straightforward to check that their sum vanishes. 

The proof of Theorem \ref{thm3.1} is complete. \qed

\begin{rmk}\label{rank2}
In both parts A and B, we crucially used the assumption that $n > 2$. When $n=2$, Theorem \ref{thm0.8}(a,b) does not hold: the lowest degree in which nontrivial relations appear is $4g-2 = r_{g,2}+2$, and there are $\binom{2g}{2}$ independent relations. This can be verified either using the HRV formula for the Poincar\'e polynomial \cite{HRV}, or using the explicit presentation of the tautological relations due to Hausel--Thaddeus \cite{HT2}. Since we focus on the higher-rank cases in this paper, we leave this to the interested reader.
\end{rmk}

\subsection{Proof of Theorem \ref{thm0.8}}

We have already proven Theorem \ref{thm3.1}, which implies Theorem \ref{thm0.8}(a,b). To prove the part (c), we need a method to find the relation in degree $r_{g,n}$. Our key tool is the \emph{Chern grading} of Theorem \ref{thm0.3} and the relations of Earl--Jeffrey--Kirwan \cite{EK, JK} for the moduli space of stable vector bundles.

Our strategy is the following: the two ingredients above can help us to find the set of candidates of the relations (\emph{i.e.} the homogeneous EJK relations which we will define later) for any $g,n$; if for certain $g,n$ we are lucky enough so that there is only \emph{one} homogeneous EJK relation in the degree $r_{g,n}$, Theorem \ref{thm0.8}(b) can guarantee that it has to be a genuine relation.

As in Section \ref{sec1.1}, we fix a line bundle $L$ of degree $d$ on the curve. Let $N_{n,L}$ be the moduli space of stable vector bundles $\CE$ on $C$ with
\[
\mathrm{rk}(\CE) =n ,\quad \mathrm{det}(\CE) \simeq L.
\]
Recall the Dolbeault moduli space $\widecheck{M}_{n,d}^{\mathrm{Dol}}$ associated with the line bundle $L$; we have a natural embedding 
\[
N_{n,L} \hookrightarrow \widecheck{M}_{n,d}^{\mathrm{Dol}}, \quad \CE \mapsto (\CE, \theta=0),
\]
which induces 
\[
H^*( \widecheck{M}_{n,d}^{\mathrm{Dol}}, \BQ ) \rightarrow H^*(N_{n,L}, \BQ).
\]
Restricting to the $\Gamma$-invariant part further yields
\begin{equation}\label{res}
\mathrm{res}: H^*(\widehat{M}_{n,d}, \BQ) = H^*(\widecheck{M}_{n,d}, \BQ)^\Gamma \to H^*(N_{n,L}, \BQ).
\end{equation}
We use the same notation
\begin{equation}\label{taut_N}
\alpha_i \in H^{2i-2}(N_{n,L}, \BQ), ~~ \beta_i \in H^{2i}(N_{n,L}, \BQ),~~ \psi_{i,j} \in H^{2i-1}(N_{n,L}, \BQ), \quad 2\leq i\leq n,~~ 1\leq j \leq 2g
\end{equation}
to denote the tautological classes given by the image of the corresponding classes (\ref{PGL_taut}). Atiyah--Bott \cite{AB} showed that these tautological classes generate $H^*(N_{n,L}, \BQ)$ as a graded $\BQ$-algebra; equivalently, the map (\ref{res}) is surjective.

Let $\BA$ be the free graded-commutative $\BQ$-algebra 
\begin{equation}
    \label{eqn: A}
    \BA:= \BQ[\alpha_2,\dots, \alpha_n, \beta_2, \dots, \beta_n] \otimes \Lambda_{\BQ}(\psi_{2,j}, \dots, \psi_{n,j})_{1\leq j \leq 2g}.
\end{equation}
It can be viewed as a cohomology ring generated by the class (\ref{PGL_taut}), and all the relations are those forced by graded-commutativity. Theorem \ref{thm1.1} and the Atiyah--Bott tautological generation theorem \cite{AB} imply that the cohomology of $\widehat{M}_{n,d}$ and $N_{n,L}$ are both quotient rings of $\BA$; we write:
\[
H^*(\widehat{M}_{n,d}, \BQ) = \BA/I^M_{g,n}, \quad H^*(N_{n,L}, \BQ) = \BA/I^{\mathrm{EJK}}_{g,n,d}.
\]
Note that Theorem \ref{thm0.3} implies that the relations for $\widehat{M}_{n,d}$ are independent of $d$.

The ideal $I^{\mathrm{EJK}}_{g,n,d}$ (here $\mathrm{EJK}$ stands for Earl--Jeffrey--Kirwan) is well-understood. Indeed, the work of Jeffrey--Kirwan \cite{JK} gives an algorithm to determine the intersection theory of tautological classes on $N_{n,L}$; by the Poincar\'e duality this in principle gives all the relations among the classes (\ref{taut_N}). Later, Earl--Kirwan \cite{EK} provides a more explicit description of all the relations following an idea of Mumford. 

The ideal $I^M_{g,n}$ for the character variety $\widehat{M}_{n,d}$ is more mysterious. The only understood case is the ideal $I^M_{g,2}$ by Hausel--Thaddeus \cite{HT2}.  However, Theorem \ref{thm0.3} and the morphism (\ref{res}) yield strong constraints on $I^M_{g,n}$ for any genus $g\geq 2$ and rank $n\geq 3$, which we record in the following proposition.

\begin{prop}\label{prop3.11}
    Any relation $\CR \in I^M_{g,n}$ also lies in $I^{\mathrm{EJK}}_{g,n,d}$.
    Moreover, the ideal $I^M_{g,n}$ is homogeneous with respect to the Chern grading (\ref{Chern_grading}).
\end{prop}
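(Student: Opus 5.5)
The first claim follows from the restriction map (\ref{res}). The presentations $H^*(\widehat{M}_{n,d}, \BQ) = \BA/I^M_{g,n}$ and $H^*(N_{n,L}, \BQ) = \BA/I^{\mathrm{EJK}}_{g,n,d}$ are both induced by the same free algebra $\BA$, sending each generator to the tautological class of the same name. The map $\mathrm{res}$ is a ring homomorphism: it is the pullback along the closed embedding $N_{n,L}\hookrightarrow \widecheck{M}^{\mathrm{Dol}}_{n,d}$, restricted to $\Gamma$-invariants. By the definition (\ref{taut_N}), it sends $\alpha_i, \beta_i, \psi_{i,j}$ on $\widehat{M}_{n,d}$ to the classes with the same names on $N_{n,L}$. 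Hence the composition $\BA \to H^*(\widehat{M}_{n,d},\BQ) \xrightarrow{\mathrm{res}} H^*(N_{n,L},\BQ)$ coincides with the quotient map $\BA \to \BA/I^{\mathrm{EJK}}_{g,n,d}$. Any $\CR\in I^M_{g,n}$ maps to $0$ in $H^*(\widehat{M}_{n,d},\BQ)$, so it also maps to $0$ in $H^*(N_{n,L},\BQ)$, that is, $\CR \in I^{\mathrm{EJK}}_{g,n,d}$.

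One point needs checking: that the tautological classes on $N_{n,L}$ used by Earl--Jeffrey--Kirwan agree with the restrictions of the normalized classes (\ref{PGL_taut}). Here I will take the paper's definition (\ref{taut_N}) of the classes on $N_{n,L}$ as the restrictions. If the EJK normalization differs, the two sets of generators are related by an explicit invertible change of variables (twisting the universal bundle by a rational class), and the ideal is transported along it. With the convention fixed by (\ref{taut_N}), nothing further is required.

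For the second claim, I would use Theorem \ref{thm1.2} (Shende), or equivalently the bigrading in Theorem \ref{thm0.3}. The cohomology $H^*(\widehat{M}_{n,d},\BQ)$ carries a multiplicative decomposition $\bigoplus_i H^*_{(i)}$ in which each generator of Chern degree $i$ lies in $H^*_{(i)}$. Give $\BA$ the grading (\ref{Chern_grading}). The surjection $\pi\colon \BA \to H^*(\widehat{M}_{n,d},\BQ)$ then respects the gradings, since it is multiplicative and sends generators to homogeneous elements of the right degree. For $\CR = \sum_k \CR_{(k)}$ with $\pi(\CR)=0$, we get $\pi(\CR_{(k)}) \in H^*_{(k)}$, and these components sum to $0$ in a direct sum. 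So each $\pi(\CR_{(k)})=0$, which means $I^M_{g,n}$ is spanned by Chern-homogeneous elements.

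An alternative route is through the automorphism (\ref{aut_thm1.2}): scaling by $\lambda^i$ on $H^*_{(i)}$ is an algebra automorphism for every $\lambda \in \BQ^*$. Invariance of the ideal under all such scalings, combined with a Vandermonde argument, gives homogeneity. The only step with any substance is the multiplicativity of the Chern grading, and that is exactly what Theorem \ref{thm1.2} provides, so I do not expect a real obstacle.
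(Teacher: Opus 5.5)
Your proposal is correct and matches the paper's argument, which is left essentially implicit ("Theorem \ref{thm0.3} and the morphism (\ref{res}) yield strong constraints"). For the first claim, $\mathrm{res}$ is a ring homomorphism that sends the generators of $\BA$ to the classes (\ref{taut_N}) by definition, so $I^{\mathrm{EJK}}_{g,n,d}$, being the kernel of the composite $\BA \to H^*(N_{n,L},\BQ)$, contains $I^M_{g,n}$ for every $d$, since $I^M_{g,n}$ is $d$-independent by Theorem \ref{thm0.3}; for the second claim, the kernel of the surjection $\BA \to H^*(\widehat{M}_{n,d},\BQ)$, which is compatible with the Chern grading by Theorem \ref{thm1.2}, is a graded ideal, exactly as recorded in Theorem \ref{thm0.3}.
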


An element $\CR \in I_{g,n,d}^{\mathrm{EJK}}$ is called a \emph{homogeneous EJK relation}, if it is homogeneous with respect to both the cohomological grading
\begin{equation}\label{coh_grading}
\mathrm{deg}(\alpha_i) = 2i-2, \quad \mathrm{deg}(\beta_i) = 2i, \quad \mathrm{deg}(\psi_{i,j}) = 2i-1,
\end{equation}
and the Chern grading
\[
\mathrm{deg}^C(\alpha_i) = \mathrm{deg}^C(\beta_i) = \mathrm{deg}^C(\psi_{i,j}) = i.
\]
By Proposition \ref{prop3.11}, any relation in $I^M_{g,n}$ has to be a homogeneous EJK relation. 

\begin{rmk}
For convenience, when we say that $\BA$ is a graded $\BQ$-algebra, we view it as a graded $\BQ$-algebra endowed with the cohomological grading (\ref{coh_grading}); we write
\[
\BA = \bigoplus_{i} \BA^i
\]
to be the corresponding decomposition. A graded automorphism 
\[
\varphi \in \mathrm{Aut}^{\mathrm{gr}}(\BA)
\]
is an automorphism which respects the cohomological grading. We use $\BA^i_{(j)}$ to denote the subspace of $\BA$ of cohomological degree $i$ and Chern degree $j$, \emph{i.e.},
\[
\BA = \bigoplus_{i,j} \BA^i_{(j)}.
\]
\end{rmk}

Since the Chern grading is realized as half the weight for the nonsingular character variety, the following lemma is a standard consequence of Hodge theory.

\begin{lem}\label{weight_constr}
If $\BA^i_{(j)} \neq 0$, then we have
\[
i \leq 2j \leq 2i.
\]
\end{lem}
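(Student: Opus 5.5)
The inequalities are additive under multiplication, so I would prove the lemma by checking them on generators of $\BA$ and then passing to products, with no Hodge theory needed. Concretely, $\BA$ is spanned as a $\BQ$-vector space by monomials in the generators $\alpha_i,\beta_i,\psi_{i,j}$ with $2\leq i\leq n$. Both the cohomological grading (\ref{coh_grading}) and the Chern grading (\ref{Chern_grading}) are additive on monomials. Each bigraded piece $\BA^i_{(j)}$ is therefore spanned by the monomials of that bidegree. Hence it suffices to show that every monomial of bidegree $(i,j)$ satisfies $i\leq 2j\leq 2i$.

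Both inequalities $i-2j\leq 0$ and $2j-2i\leq 0$ are linear in $(i,j)$, and they hold for the unit, which has bidegree $(0,0)$. So the set of bidegrees satisfying them is closed under addition, and it is enough to verify them on the generators.

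\emph{Step 1 (generators $\alpha_k$):} the bidegree is $(2k-2,k)$. The first inequality reads $2k-2\leq 2k$, which always holds. The second reads $2k\leq 4k-4$, i.e.\ $k\geq 2$, which is guaranteed because the index starts at $2$.

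\emph{Step 2 (generators $\beta_k$):} the bidegree is $(2k,k)$. Here $2k\leq 2k\leq 4k$, which holds trivially.

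\emph{Step 3 (generators $\psi_{k,j}$):} the bidegree is $(2k-1,k)$. Here $2k-1\leq 2k\leq 4k-2$, which holds since $k\geq 1$.

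The only point requiring any care is the second inequality for $\alpha_k$. It is exactly where the absence of $\alpha_1$ (i.e.\ starting the index at $2$, after normalization) is used, and it is the sharp case $\alpha_2$, of bidegree $(2,2)$, that attains $2j=2i$.

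As a consistency check, and to match the remark in the paper, I would also note the Hodge-theoretic interpretation. By Theorem~\ref{thm1.2}, $H^k_{(j)}=W_{2j}\cap F^j$. For the smooth variety $\widecheck{M}^B_{n,d}$, the weights on $H^k$ lie in $[k,2k]$, so the pieces $W_{2j}\cap F^j$ can be nonzero only when $k\leq 2j\leq 2k$. The generator check is self-contained and is the route I would actually write down.
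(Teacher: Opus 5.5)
Your proof is correct, but it takes a different route from the paper. The paper gives no written argument beyond the remark that the Chern grading is half the weight on the cohomology of the nonsingular variety $\widecheck{M}^B_{n,d}$. So its proof amounts to the standard fact that the weights on $H^i$ of a smooth variety lie in $[i,2i]$; this is exactly your closing ``consistency check''.

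You instead check the two linear inequalities directly on the bidegrees $(2k-2,k)$, $(2k,k)$, $(2k-1,k)$ of $\alpha_k,\beta_k,\psi_{k,j}$, using $k\geq 2$ for $\alpha_k$. You then use that the set of admissible bidegrees is closed under addition.

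Your version fits the statement as written arguably better. $\BA$ is the free algebra, not a cohomology ring, so the Hodge-theoretic bound literally constrains only the nonzero pieces of the quotient $R_{g,n}=\BA/I^M_{g,n}$. To pass to $\BA$ one must apply it to the generators and then invoke additivity anyway. That is your argument, with the elementary check replaced by a Hodge-theoretic one.

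What the paper's viewpoint buys is explanation rather than proof. The bounds are forced for \emph{any} bihomogeneous generating set of $R_{g,n}$, without inspecting bidegrees. The extreme cases ($\beta_k$ with $i=2j$, and $\alpha_2$ with $2j=2i$) are precisely classes of the lowest and highest possible weight.
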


The symplectic group $\mathrm{Sp}(H^1(C,\BZ))$ acts naturally on $\BA$ through the standard action on the odd generators $\psi_{i,j}$. We consider the $\operatorname{Sp}(H^1(C,\BZ))$-invariant part $\BA^{\operatorname{Sp}_{2g}}\subset \BA$. By the first fundamental theorem for the symplectic group, the $\mathrm{Sp}(H^1(C,\BZ))$-invariant subalgebra of the exterior algebra 
\[
\Lambda_{\BQ}(\psi_{2,j}, \dots, \psi_{n,j})_{1\leq j \leq 2g}
\]
is generated by 
\[
\gamma_{rs}
:=
\sum_{j=1}^{g}
\left(
\psi_{r,j}\psi_{s,j+g}
-
\psi_{r,j+g}\psi_{s,j}
\right),
\qquad 2\le r,s\le n.
\]
We conclude that the graded $\BQ$-algebra $\BA^{\mathrm{Sp}_{2g}}$ is generated by $\alpha_i,\beta_j, \gamma_{rs}$. Since we have already proven that the relation $\Upsilon_{g,n} \in I^M_{g,n}$ of minimal cohomological degree $r_{g,n}$ is unique, it has to be $\operatorname{Sp}(H^1(C,\BZ))$-invariant, \emph{i.e.},
\[
\Upsilon_{g,n} \in \BA^{\operatorname{Sp}_{2g}}.
\]

The following proposition completes the proof of Theorem \ref{thm0.8}(c).

\begin{prop}\label{prop3.12}
    For $g=2$ and $n=5$, there is a unique homogeneous $\operatorname{Sp}(H^1(C,\BZ))$-invariant EJK relation 
    \[
\Upsilon_{2,5} \in    I^{\mathrm{EJK}}_{2,5,1}
    \]
    in the cohomological degree $r_{2,5}=22$.
\end{prop}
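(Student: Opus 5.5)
Since the statement is a finite linear-algebra claim, I would prove it by explicit computation in the bigraded space $\BA^{\mathrm{Sp}_{2g}}$ for $(g,n)=(2,5)$. The plan has three stages: enumerate the candidate space, compute the restriction map to $H^*(N_{5,L},\BQ)$, and show its kernel is a line.

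\textbf{Enumerating candidates.} I would first enumerate a $\BQ$-basis of $\BA^{\mathrm{Sp}_4}$ in cohomological degree $22$. It consists of monomials in $\alpha_i,\beta_i$ ($2\le i\le 5$) and the invariants $\gamma_{rs}$ ($2\le r,s\le 5$), where $\gamma_{rs}$ has degree $2r+2s-2$ and Chern degree $r+s$. Because $g=2$, the exterior part in the $\psi_{r,j}$ is finite-dimensional, so there are relations among the $\gamma_{rs}$. These (second fundamental theorem relations, e.g.\ products of three $\gamma$'s involving repeated indices) must be quotiented out, or I would avoid the issue by working directly with the invariant subspace of the exterior algebra.

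This space splits by Chern degree $j$. By Lemma~\ref{weight_constr}, only $11\le j\le 22$ can occur. Since Proposition~\ref{prop3.11} says $I^M_{2,5}$ is Chern-homogeneous, it suffices to treat each Chern degree $j$ separately. For each $j$, I determine the subspace of $\BA^{22}_{(j)}\cap\BA^{\mathrm{Sp}_4}$ killed by the restriction map to $H^{22}(N_{5,L},\BQ)$, i.e.\ lying in $I^{\mathrm{EJK}}_{2,5,1}$.

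\textbf{Detecting EJK relations.} Here $N_{5,L}$ has complex dimension $(n^2-1)(g-1)=24$, so real dimension $48$. Since $H^*(N_{5,L},\BQ)$ satisfies Poincaré duality and is generated by the tautological classes (Atiyah--Bott), a class $x\in\BA^{22}$ restricts to zero if and only if $\int_{N_{5,L}} x\cdot y=0$ for every $y$ in a spanning set of $\BA^{26}$. Because $x$ is $\mathrm{Sp}$-invariant, it suffices to pair $x$ against $\mathrm{Sp}$-invariant $y$ only; the non-invariant part of the pairing vanishes automatically.

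The intersection numbers would be computed using the Jeffrey--Kirwan residue formula for $N_{n,L}$, as in \cite{JK}, with $d=1$. Alternatively one could use the explicit Earl--Kirwan generators of $I^{\mathrm{EJK}}$ \cite{EK}, but I expect the pairing approach to be more mechanical.

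The result is a pairing matrix for each Chern degree $j$. I then compute its kernel in exact rational arithmetic and show that the kernels, summed over all $j$, have total dimension exactly one. One consistency check is that the total kernel is at least one-dimensional: Theorem~\ref{thm0.8}(b) together with Proposition~\ref{prop3.11} forces $\Upsilon_{2,5}$ to be such a relation. So the real content of the computation is the upper bound, i.e.\ showing the rank is large enough.

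\textbf{Main obstacle.} The hard part is computing the Jeffrey--Kirwan intersection numbers for rank $5$. These are iterated residues over the root system of $\mathrm{SU}(5)$, summed over the Weyl group, with genus-$2$ odd contributions. Both the number of pairings and the size of each residue computation are large.

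To manage this, I would first express the tautological classes through the Chern roots of the universal bundle. Then I would reduce all integrals to a single generating residue, with the odd classes entering through their standard invariant expression in the $\gamma$'s. Finally I would evaluate the residues by computer. As further checks, I would confirm that the computed ranks in degrees below $22$ are full, in agreement with Theorem~\ref{thm0.8}(a), and test the answer against the known Betti numbers of $N_{5,L}$.
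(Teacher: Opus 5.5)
Your proposal is correct and is essentially the paper's proof. Like the paper, you decompose $\BA^{\mathrm{Sp}_4,22}$ by Chern degree $11\le c\le 22$, compute the Jeffrey--Kirwan pairing matrices against $\BA^{\mathrm{Sp}_4,26}$ on $N_{5,L}$, and check that the left kernel is zero for $c\neq 12$ and a line for $c=12$. The paper does the rank computations modulo $p=2^{61}-1$ and then verifies exactly that a lift of the kernel generator pairs to zero with all $1039$ basis elements; your appeal to Theorem~\ref{thm0.8}(b) and Proposition~\ref{prop3.11} for existence would make that last step unnecessary, except for extracting the explicit formula.

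One small correction to your proposed side check: full rank of these matrices below degree $22$ is not predicted by Theorem~\ref{thm0.8}(a). That theorem only constrains $I^M_{g,n}$, which is a possibly proper subideal of $I^{\mathrm{EJK}}_{g,n,d}$.
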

\begin{proof}
We fix $L$ to be a line bundle on $C$ of degree 1, and let $N_{5,L}$ be the corresponding moduli space of stable vector bundles. Finding homogeneous EJK relations for $N_{5,L}$ is a linear algebra problem using the Jeffrey--Kirwan intersection pairing formula \cite[Theorem 9.12]{JK}. Due to the high computational complexity, the proof is completed using computer calculations. The codes and results are all posted in the public repository \cite{R5G2JK}. The \href{https://github.com/szqzs/R5G2JK/tree/main/docs}{documentation files} of the repository explain in more detail about the function of each part of the code and provide an explicit map connecting the math terms in \cite{JK} and the specific chunks of codes; we explain the repository briefly here.

The complex dimension of $N_{5,L}$ is $24$, and the Poincar\'e pairing on $N_{5,L}$ is $\mathrm{Sp}(H^1(C, \BQ))$-invariant. Therefore, the desired relations are given by elements $\gamma \in \mathbb{A}^{\operatorname{Sp}_4,22}$ satisfying that 
\[
\int_{N_{5,L}} \gamma \cup \gamma' = 0, \quad \forall \gamma' \in \BA^{\operatorname{Sp}_4, 26}.
\]
We note that, \emph{a priori}, such a $\gamma$ is not homogeneous with respect to the Chern grading.

Now, by Lemma \ref{weight_constr}, we decompose $\BA^{\operatorname{Sp}_4,22}$ as
\[
\BA^{\operatorname{Sp}_4,22} = \bigoplus_{c=11}^{22} \BA^{\operatorname{Sp}_4,22}_{(c)}
\]
according to the Chern grading. 
We fix a basis of $\BA$ given by the monomials of $\alpha_i, \beta_i, \psi_{i,j}$, and use this basis to express the pairing matrix \[
M_c= \left( \int_{N_{5,L}} \gamma \cup \gamma' \right)_{\gamma \in \BA^{\operatorname{Sp}_4,22}_{(c)},\; \gamma' \in \mathbb{A}^{\operatorname{Sp}_4,26}};
\]
it has size 
\[
\dim \BA^{\operatorname{Sp}_4,22}_{(c)} \times \dim \BA^{\operatorname{Sp}_4,26}.
\]
In principle, for each given $c \in [11, 22]$, we can use  \cite[Theorem 9.12]{JK} to calculate the full matrix $M_c$, calculate the left kernel of $M_c$, and verify that the kernel is 0 for $c\neq 12$ and that the left kernel is one-dimensional if $c=12$. 
That one-dimensional kernel then has to be the desired unique homogeneous relation in degree $22$.
However, this approach is too time-consuming, and we implemented the calculation in the following way so that the calculation can finish within a day on 4-8 CPUs.

The codes are mostly written in Python, which handles modular arithmetic better than rational arithmetic. Therefore, we try to do most calculations over a finite field.
Namely, we first choose a large prime $p=2^{61}-1$. For each $c\neq 12$, we calculate enough number of columns of $M_c \pmod p$ which is sufficient to conclude that $M_c(\mathbb{F}_p)$ is of full row rank. Therefore, the original rational matrix $M_c$ also has to be of full rank.
When $c=12$, we have $\dim_{\BQ} \BA^{\operatorname{Sp}_4,22}_{(12)}=44$.
We calculated the full matrix $M_c(\mathbb{F}_p)$ and we find that its row rank is 43. 
We then take a lift to $\mathbb{Z}$ of a generator of the left kernel of $M_c(\mathbb{F}_p)$, pair it with all the 1039 basis elements in $\mathbb{A}^{\operatorname{Sp}_4,26}$, and find that all the pairings are zero. Therefore, the lift has to be the desired $\Upsilon_{2,5}$.
\end{proof}

We note that the full (conjectural) HRV formula \cite{HRV} which calculates the weight polynomials for the character varieties predicts that the homogeneous EJK relation has to lie in $\BA^{22}_{(12)}$.  Our proof of Proposition \ref{prop3.12} does not rely on that; we show that when $c \neq 12$ there is no homogeneous EJK relation in $\BA^{22}_{(c)}$ at all.

\begin{rmk}\label{rmk_more}
    It is natural to search for higher $n$ for which the statement of Proposition \ref{prop3.12} holds. In view of Theorem \ref{thm0.5}, Conjecture \ref{main_conj} may also hold for those $n$'s.
\end{rmk}

We conclude this section by writing down the explicit form of $\Upsilon_{2,5}$, which is obtained from the computer calculation in the proof of Proposition \ref{prop3.12}.


\begin{thm}\label{thm: the unique relation}
For $d$ coprime to 5, the unique relation 
    \[
    \Upsilon_{2,5} = 0 \in H^{r_{2,5}}(\widehat{M}_{5,d}, \BQ) =  H^{22}(\widehat{M}_{5,d}, \BQ)
    \]
 is given by
    \begin{align*}
& \Upsilon_{2,5}:=130\alpha_2\beta_2^5
-3120\alpha_2\beta_2^3\beta_4
+6720\alpha_2\beta_2^2\beta_3^2
-180480\alpha_2\beta_2\beta_3\beta_5 +288\alpha_2\beta_2\beta_4^2
-31104\alpha_2\beta_3^2\beta_4\\
& \quad 
+576000\alpha_2\beta_5^2 +4480\alpha_3\beta_2^3\beta_3 
-90240\alpha_3\beta_2^2\beta_5
-62208\alpha_3\beta_2\beta_3\beta_4
+13824\alpha_3\beta_3^3
+1036800\alpha_3\beta_4\beta_5\\
&\quad 
 -780\alpha_4\beta_2^4
+288\alpha_4\beta_2^2\beta_4
-31104\alpha_4\beta_2\beta_3^2
+1036800\alpha_4\beta_3\beta_5
+440640\alpha_4\beta_4^2-90240\alpha_5\beta_2^2\beta_3\\
&\quad
+1152000\alpha_5\beta_2\beta_5
+1036800\alpha_5\beta_3\beta_4 -65\beta_2^4\gamma_{22}
+960\beta_2^3\gamma_{33}
-5120\beta_2^2\beta_3\gamma_{23}
+1560\beta_2^2\beta_4\gamma_{22}\\
& \quad
 -17280\beta_2^2\gamma_{35}
+4608\beta_2^2\gamma_{44}
+480\beta_2\beta_3^2\gamma_{22}
-19200\beta_2\beta_3\gamma_{25}
+13824\beta_2\beta_3\gamma_{34}
-18432\beta_2\beta_4\gamma_{24}\\
&\quad 
-14976\beta_2\beta_4\gamma_{33}+88320\beta_2\beta_5\gamma_{23}
+161280\beta_2\gamma_{55}
-20736\beta_3^2\gamma_{24}
-2304\beta_3^2\gamma_{33}+48384\beta_3\beta_4\gamma_{23}\\
&\quad 
+48000\beta_3\beta_5\gamma_{22}
+207360\beta_3\gamma_{45} +18288\beta_4^2\gamma_{22}
+69120\beta_4\gamma_{35}
-230400\beta_5\gamma_{25}
-345600\beta_5\gamma_{34}.
\end{align*}
\end{thm}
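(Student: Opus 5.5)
The plan is to identify $\Upsilon_{2,5}$ with the element produced in the proof of Proposition \ref{prop3.12}, and then transport it to every $d$ coprime to $5$ using Theorem \ref{thm0.3}.

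First, Theorem \ref{thm0.8}(b) says the kernel of $\BA^{22}\to R^{22}_{2,5}$ is one-dimensional. It is spanned by some $\Upsilon_{2,5}\in I^M_{2,5}$. By Proposition \ref{prop3.11}, $\Upsilon_{2,5}$ is homogeneous for the Chern grading and lies in $I^{\mathrm{EJK}}_{2,5,1}$. Since the kernel is a single line preserved by the $\mathrm{Sp}(H^1(C,\BZ))$-action, it is an $\mathrm{Sp}$-stable line. The group $\mathrm{Sp}_{4}$ is connected and semisimple, so it has no nontrivial characters and must act trivially on this line; hence $\Upsilon_{2,5}\in \BA^{\mathrm{Sp}_4}$. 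Thus $\Upsilon_{2,5}$ is a homogeneous $\mathrm{Sp}$-invariant EJK relation in degree $22$. By the uniqueness in Proposition \ref{prop3.12}, it is proportional to the generator of the left kernel of $M_{12}$ computed there. That generator is the displayed expression.

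Next, I check the displayed element against the required structure. It should lie in $\BA^{22}_{(12)}$ and be written in the generators $\alpha_i,\beta_i,\gamma_{rs}$ of $\BA^{\mathrm{Sp}_4}$. For example, $\alpha_2\beta_2^5$ has cohomological degree $2+20=22$ and Chern degree $2+10=12$. Similarly, $\beta_2^4\gamma_{22}$ has cohomological degree $16+6=22$ and Chern degree $8+4=12$. The integer coefficients are the lift of the $\mathbb{F}_p$ kernel vector described in the proof of Proposition \ref{prop3.12}. That proof verifies, by exact pairing against all $1039$ basis elements of $\BA^{\mathrm{Sp}_4,26}$, that this lift pairs to zero on $N_{5,L}$. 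So the lift is a genuine rational EJK relation, not merely one modulo $p$.

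Finally, the statement claims the relation for all $d$ coprime to $5$, while the computation used $d=1$. Here I would invoke Theorem \ref{thm0.3}: the isomorphisms $\varphi_{d,d'}$ fix every tautological generator, so $I^M_{2,5}$ is independent of $d$, and the relation $\Upsilon_{2,5}=0$ in $H^{22}(\widehat{M}_{5,1},\BQ)$ transfers verbatim to $H^{22}(\widehat{M}_{5,d},\BQ)$.

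The main obstacle is not conceptual but the reliability of the computer step. One must be sure that the $\mathbb{F}_p$ rank computations for $c\neq 12$ correctly certify full rank over $\BQ$; this is valid, since a rank drop mod $p$ can only lower the rank, never raise it. One must also be sure that the integral lift has been checked exactly, so that it is a true kernel vector. I would rely on the repository's exact rational verification of the $1039$ pairings for the displayed coefficients.
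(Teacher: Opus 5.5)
Your proposal is correct and follows the paper's own route. The one-dimensional kernel from Theorem~\ref{thm0.8}(b) is Chern-homogeneous and lies in $I^{\mathrm{EJK}}_{2,5,1}$ by Proposition~\ref{prop3.11}. Being a unique line, it is $\mathrm{Sp}$-invariant, so by Proposition~\ref{prop3.12} it is spanned by the computed kernel vector, and Theorem~\ref{thm0.3} transfers it to every $d$. One small precision: $\mathrm{Sp}(H^1(C,\BZ))\cong\mathrm{Sp}_4(\BZ)$ does have a nontrivial sign character. So triviality of the action on the line should be deduced from the connected algebraic group, e.g.\ by Zariski density of $\mathrm{Sp}_4(\BZ)$ or from the $\mathrm{GSp}(H^1(C,\BQ))$-symmetry of Section~\ref{sec4}; your phrasing does this implicitly, and the paper leaves it tacit.
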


\section{Automorphism groups and proof of Theorem \ref{thm0.7}}\label{sec4}
In this section, we provide a numerical criterion (see Proposition \ref{normal_group}) to verify $G^\mathrm{wd}_{g,n} =G_{g,n}$; then we use this criterion and the relation $\Upsilon_{2,5}$ to prove Theorem \ref{thm0.7}. Combined with Theorem \ref{thm0.5}, we complete the proof of Theorem \ref{thm0.2}. We note that in this section, Proposition \ref{prop4.4} is the only place where the assumption $(g,n)=(2,5)$ is used. 

\subsection{A subgroup $H$ of $G_{g,n}$}

Since $R_{g,n} =  H^*(\widehat{M}_{n,d}, \BQ)$ is a finitely generated graded $\BQ$-algebra, we see that $G_{g,n}$ is a linear algebraic group over $\BQ$. We use $\mathfrak{g}_{g,n}$ to denote its Lie algebra, which underlies a finite-dimensional $\BQ$-vector space.

In \cite{dCMSZ}, a subgroup 
\[
H\simeq \mathrm{GSp}(H^1(C,\BQ))\times \BQ^* \subset G_{g,n}
\]
was described via the generators (\ref{generators}) of $R_{g,n}$. The group $H$ plays a crucial role for our purpose, which we review as follows.

Recall the symplectic basis $\{\delta_k\}_{k=1}^{2g}$ of $H^1(C, \BQ)$ in Section \ref{sec1} whose intersection matrix is 
\[
J_g:=\begin{pmatrix}
0 & \mathrm{Id}_g \\
-\mathrm{Id}_g & 0
\end{pmatrix}.
\]
Any element $\varrho\in \mathrm{GSp}(H^1(C, \BQ)) \subset \mathrm{GL}(H^1(C, \BQ))$ satisfies
\[
{^\mathfrak{t}}{\varrho} J_g \varrho = \chi_{\mathrm{sim}}(\varrho) J_g,
\]
with 
\begin{equation}\label{nu_char}
\chi_{\mathrm{sim}}: \mathrm{GSp}(H^1(C, \BQ)) \to \BQ^*
\end{equation}
the similitude character. It acts on the vertical vector ${^\mathfrak{t}}(\psi_{i,1}, \psi_{i,2}, \cdots, \psi_{i,2g})$ by left multiplication; we write this action on the $\BQ$-vector space $R^{2i-1}_{g,n}$  as
\[
\psi_{i,j} \mapsto \varrho(\psi_{i,j}).
\]
The action of any element
\[
\varphi : = (\varrho, t) \in  H\simeq  \mathrm{GSp}(H^1(C, \BQ)) \times \BQ^*, \quad \varrho \in \mathrm{GSp}(H^1(C, \BQ)),~~ t\in \BQ^*,
\]
on $R_{g,n}$ is given by
\begin{equation}\label{H_action}
\alpha_i \mapsto \chi_{\mathrm{sim}}(\varrho) t^i\alpha_i, \quad \beta_i \mapsto t^i \beta_i, \quad \psi_{i,j}\mapsto t^i \varrho(\psi_{i,j}).
\end{equation}
Geometrically, the group $H$ comes from combining the monodromy symmetries $\mathrm{Sp}(H^1(C, \BQ))$ and the action of $\BQ^* \times \BQ^*$ given by the bigrading of $R_{g,n}$.

The character lattice of $H$ is of rank 2,
\begin{equation}\label{additive}
\BX^*(H) = \BZ \chi_{\mathrm{sim}} \oplus \BZ \chi_{\mathrm{tor}},
\end{equation}
where $\chi_{\mathrm{sim}}$ is given by (\ref{nu_char}) and $\chi_{\mathrm{tor}}: \BQ^* \xrightarrow{\simeq} \BQ^*$ is the standard character of the torus $\BQ^*$. The $H$-action (\ref{H_action}) defines a natural $H$-action on the graded $\BQ$-algebra $\BA$ as in \eqref{eqn: A}.  Every $H$-fixed line $\BQ v \subset \BA$ corresponds to a character $\chi_v \in \BX^*(H)$ satisfying
\[
h v = \chi_v(h) \cdot v, \quad \forall h \in H.
\]
For convenience we use the additive notation (\ref{additive}) to present a character; for example, we have by (\ref{H_action}) that
\[
\chi_{\alpha_i} = \chi_{\mathrm{sim}} + i \chi_{\mathrm{tor}}, \quad \quad \chi_{\beta_i} = i\chi_{\mathrm{tor}}.
\]
Therefore the $H$-characters corresponding to the basis $\alpha^2_2, \alpha_3, \beta_2$ of $R^4_{g,n}$ are given by
\begin{equation}\label{H-char}
\chi_{\alpha^2_2} = 2\chi_{\mathrm{sim}} + 4 \chi_{\mathrm{tor}}, \quad \chi_{\alpha_3} = \chi_{\mathrm{sim}}+3 \chi_{\mathrm{tor}}, \quad \chi_{\beta_2} = 2 \chi_{\mathrm{tor}}.
\end{equation}
In view of Theorem \ref{thm0.8}(a,b), this $H$-action on $\BA$ has to preserve the line
\[
\BQ\Upsilon_{g,n} \subset \BA,
\]
which allows us to consider its associated character
\[
\chi_{\CR}  \in \BX^*(H). 
\]
The relation $\Upsilon_{g,n}$ is homogeneous with respect to both the cohomological grading $r_{g,n}$ and the Chern grading (which is defined as $c_{g,n}$). A direct calculation using (\ref{H_action}) shows that 
\[
\chi_{\CR} = \left(c_{g,n} -\frac{r_{g,n}}{2}\right) \chi_{\mathrm{sim}} +    c_{g,n} \chi_{\mathrm{tor}}.
\]

\begin{lem}\label{lem4.1}
    If $r_{g,n} \neq c_{g,n}$, then the two vectors $\chi_\CR$ and $\chi_{\alpha_2}$ span the rationalized character lattice $\BX^*(H)$.
\end{lem}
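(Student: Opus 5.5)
This is a direct linear-algebra check in the basis $\chi_{\mathrm{sim}},\chi_{\mathrm{tor}}$ of the rationalized lattice $\BX^*(H)\otimes\BQ$ from (\ref{additive}). By (\ref{H_action}) we have $\chi_{\alpha_2} = \chi_{\mathrm{sim}} + 2\chi_{\mathrm{tor}}$. By the computation preceding the lemma,
\[
\chi_{\CR} = \left(c_{g,n} - \tfrac{r_{g,n}}{2}\right)\chi_{\mathrm{sim}} + c_{g,n}\chi_{\mathrm{tor}}.
\]
The space $\BX^*(H)\otimes\BQ$ is $2$-dimensional, so it suffices to show that these two vectors are linearly independent. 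Equivalently, the $2\times 2$ determinant of their coordinates must be nonzero.

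The determinant is
\[
\det\begin{pmatrix} 1 & 2 \\ c_{g,n}-\frac{r_{g,n}}{2} & c_{g,n}\end{pmatrix} = c_{g,n} - 2\left(c_{g,n}-\tfrac{r_{g,n}}{2}\right) = r_{g,n} - c_{g,n}.
\]
By hypothesis this is nonzero, so the two characters are independent over $\BQ$ and therefore span $\BX^*(H)\otimes\BQ$.

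The only step needing care is the formula for $\chi_{\CR}$. I would re-derive it by checking it on a monomial. Each $\alpha_i$ contributes $\chi_{\mathrm{sim}}$ plus Chern degree $i$ times $\chi_{\mathrm{tor}}$, with cohomological degree $2i-2$. Each $\beta_i$ contributes only $i\chi_{\mathrm{tor}}$, with cohomological degree $2i$. Each $\psi_{i,j}$ contributes $i\chi_{\mathrm{tor}}$ together with one factor $\varrho$, with cohomological degree $2i-1$. On the $\mathrm{Sp}$-invariant combinations $\gamma_{rs}$, a pair of $\psi$'s contributes $\chi_{\mathrm{sim}}$, since the symplectic form scales by $\chi_{\mathrm{sim}}$. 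In all three cases, the number of $\chi_{\mathrm{sim}}$ factors equals Chern degree minus half the cohomological degree. This gives $c_{g,n} - \tfrac{r_{g,n}}{2}$ for the homogeneous, $\mathrm{Sp}$-invariant relation $\Upsilon_{g,n}$, consistent with the formula above. I expect no real obstacle beyond this bookkeeping.
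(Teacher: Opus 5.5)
Your proposal is correct and is essentially the paper's argument. The paper simply says the claim follows from $\dim_\BQ \BX^*(H)_\BQ = 2$ and the explicit formulas for $\chi_{\alpha_2}$ and $\chi_\CR$; your determinant computation, which gives $r_{g,n}-c_{g,n}$, and your consistency check of $\chi_\CR$ on monomials and on the classes $\gamma_{rs}$ spell out exactly that.
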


\begin{proof}
    It follows from the fact that $\dim_\BQ \BX^*(H) =2$ and the explicit formulas for $\chi_{\alpha_2}$ and $\chi_{\CR}$ above.
\end{proof}


The following proposition provides a criterion to verify the property (\ref{wd=G}) for the group $G_{g,n}$. As we will see in Section \ref{sec4.2}, the condition (\ref{linear-alge-final}) is linear-algebraic.

\begin{prop}\label{normal_group}
Assume that $r_{g,n} \neq c_{g,n}$. If $H$ is a normal subgroup of $G_{g,n}$, then (\ref{wd=G}) holds for the pair $(g,n)$. In particular, if 
    \begin{equation}\label{linear-alge-final}
    \dim_\BQ \mathfrak{g}_{g,n} \leq \dim_\BQ \mathfrak{gsp}(H^1(C, \BQ)) +1 (= 2g^2+g+2),
    \end{equation}
    then (\ref{wd=G}) holds for $(g,n)$.
\end{prop}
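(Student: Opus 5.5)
The plan is to use normality of $H$ to make every $\varphi\in G_{g,n}$ permute the $H$-character (weight) spaces of $R_{g,n}$, and then to show that $\varphi$ must in fact fix the characters $\chi_{\alpha_3}$ and $\chi_{\beta_2}$.

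\textbf{Step 1: $\varphi$ permutes weight spaces.} Let $T\subset H$ be the central torus. It is the centre of $\mathrm{GSp}$ times $\BQ^*$, and it acts on $R_{g,n}$ through the characters (\ref{H_action}). If $H$ is normal in $G_{g,n}$, then so is the connected centre $Z(H)^\circ$, which is $T$. Conjugation by any $\varphi\in G_{g,n}$ is an automorphism of $T$, hence induces an automorphism $\sigma_\varphi$ of $\BX^*(T)\otimes\BQ$, which equals $\BX^*(H)_\BQ$. Moreover $\varphi$ carries the $\chi$-weight space of $R_{g,n}$ onto the $\sigma_\varphi(\chi)$-weight space. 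Because $\varphi$ is graded, it preserves each $R^k_{g,n}$, so $\sigma_\varphi$ permutes the finite set of weights occurring in each degree $k$.

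\textbf{Step 2: $\sigma_\varphi$ is the identity.} In degree $2$, $R^2_{g,n}=\BQ\alpha_2$ has the single weight $\chi_{\alpha_2}$, so $\sigma_\varphi(\chi_{\alpha_2})=\chi_{\alpha_2}$. For the relation, I would use that $\Upsilon_{g,n}$ is the unique relation in degree $r_{g,n}$ (Theorem \ref{thm0.8}(a,b)) and that $\BA^{<r_{g,n}}\cong R^{<r_{g,n}}_{g,n}$. From these, $\varphi$ lifts to a graded automorphism of $\BA$ in low degrees. The lift is compatible with $T$ up to $\sigma_\varphi$ and must preserve the line $\BQ\Upsilon_{g,n}$, which forces $\sigma_\varphi(\chi_\CR)=\chi_\CR$. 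This is the step I expect to be the main obstacle: $\varphi$ lifts to $\BA$ only non-canonically, and the lift need not be $T$-equivariant up to $\sigma_\varphi$. I would handle it by working on the quotient $\BA/(\text{ideal generated by } \BA^{\ge 1}\cdot \text{decomposables beyond degree } r_{g,n})$, or by conjugating the lift by a unipotent element to make it $T$-twisted-equivariant, using reductivity of $T$. Granting this, Lemma \ref{lem4.1} together with $r_{g,n}\neq c_{g,n}$ shows that $\chi_{\alpha_2}$ and $\chi_\CR$ span $\BX^*(H)_\BQ$. Hence $\sigma_\varphi=\mathrm{id}$.

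\textbf{Step 3: conclusion.} By (\ref{H-char}) the three weights $\chi_{\alpha_2^2}$, $\chi_{\alpha_3}$, $\chi_{\beta_2}$ are pairwise distinct, so $R^4_{g,n}$ splits into the three weight lines $\BQ\alpha_2^2$, $\BQ\alpha_3$, $\BQ\beta_2$. Since $\sigma_\varphi=\mathrm{id}$, $\varphi$ preserves each of these lines, so it is weakly diagonal, and (\ref{wd=G}) follows.

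\textbf{The "in particular".} Since $H\subset G_{g,n}$, we have $\mathrm{Lie}(H)\subset\mathfrak g_{g,n}$, and $\dim\mathrm{Lie}(H)=2g^2+g+2$. The inequality (\ref{linear-alge-final}) therefore forces $\mathrm{Lie}(H)=\mathfrak g_{g,n}$, so $H$ and $G_{g,n}^\circ$ are connected of the same Lie algebra, giving $H=G_{g,n}^\circ$. The identity component is normal in $G_{g,n}$, so the first part applies.
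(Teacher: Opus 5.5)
Your overall route is the paper's. Normality makes conjugation by $\varphi$ act on the rationalized character lattice. The lines $\BQ\alpha_2$ and $\BQ\Upsilon_{g,n}$ give two characters that span by Lemma \ref{lem4.1}, so that action is trivial and the three distinct weight lines of $R^4_{g,n}$ are preserved. Your treatment of the ``in particular'' (forcing $H=G_{g,n}^\circ$) is also the paper's. Working with the characteristic subtorus $T=Z(H)^\circ$ instead of $H$ is a harmless, slightly cleaner variant: its weight decomposition is just the bigrading, and its rational character lattice agrees with $\BX^*(H)_\BQ$.

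\textbf{The gap is in Step 2.} You never prove $\sigma_\varphi(\chi_\CR)=\chi_\CR$; you ``grant'' it. The obstacle you name does not exist, and the remedies you sketch are not arguments: the quotient is not well defined as stated, and conjugating a lift by some $u$ would make it stabilize $u(\BQ\Upsilon_{g,n})$ rather than $\BQ\Upsilon_{g,n}$. As written, the key step is missing.

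\textbf{The missing idea is that the lift is canonical.}
\begin{itemize}
\item Every generator of $\BA$ has degree at most $2n$, and $2n<r_{g,n}=4g(n-1)-2n$ since $n\ge 3$.
\item By Theorem \ref{thm0.8}(a) together with tautological generation, $\pi:\BA^k\to R^k_{g,n}$ is an isomorphism for $k<r_{g,n}$.
\item Since $\BA$ is free graded-commutative, a graded algebra endomorphism $\tilde\varphi$ of $\BA$ with $\pi\circ\tilde\varphi=\varphi\circ\pi$ is forced on each generator $x\in\BA^k$ to be $(\pi|_{\BA^k})^{-1}(\varphi(\pi x))$. So the lift exists and is unique.
\item Uniqueness makes $\varphi\mapsto\tilde\varphi$ a group homomorphism $G_{g,n}\to\mathrm{Aut}^{\mathrm{gr}}(\BA)$. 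Its restriction to $H$, hence to $T$, is the action \eqref{H_action} on $\BA$. Thus $\tilde\varphi\,\tilde h\,\tilde\varphi^{-1}=\widetilde{\varphi h\varphi^{-1}}$, and $\tilde\varphi$ carries each $T$-weight space $\BA_\chi$ onto $\BA_{\sigma_\varphi(\chi)}$ with no extra work.
\item $\tilde\varphi$ preserves $\ker(\BA^{r_{g,n}}\to R^{r_{g,n}}_{g,n})=\BQ\Upsilon_{g,n}$ by Theorem \ref{thm0.8}(b).
\item $\Upsilon_{g,n}$ is a $T$-eigenvector because it is bihomogeneous.
\end{itemize}
Together these give $\sigma_\varphi(\chi_\CR)=\chi_\CR$. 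This is exactly the paper's remark that ``such a lifting exists and is canonical''. With it inserted, the rest of your argument goes through unchanged.
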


\begin{proof}
The second assertion follows immediately from the first. Indeed, if we have (\ref{linear-alge-final}), then the subgroup $H \subset G_{g,n}$ has to be equal to the identity component of the linear algebraic group $G_{g,n}$, which is clearly a normal subgroup. 

It suffices to prove the first assertion; we assume that $H$ is a normal subgroup of $G_{g,n}$. For a given element $\varphi \in G_{g,n}$, we consider the group homomorphism
\[
\tau_{\varphi}: H \to H, \quad h \mapsto \varphi^{-1} h \varphi,
\]
which is well-defined by the normality assumption. It induces a natural action on the rationalized character lattice
\[
\tau_{\varphi}^*: \BX^*(H)_\BQ \to \BX^*(H)_\BQ.
\]
We consider the $G_{g,n}$-action on $\BA$ induced by the $G_{g,n}$-action on $R_{g,n}$; such a lifting exists and is canonical since all the generators of $R_{g,n}$ have degrees $< r_{g,n}$ and there are no relations among them by Theorem \ref{thm0.8}(a,b). We note the following standard fact in group theory.

\medskip
\noindent {\bf Claim.} If a line $\BQ v \subset \BA$ is fixed by $G_{g,n}$, then we have 
\[
\tau^*_{\varphi} \; \chi_{v} = \chi_v, \quad \forall \varphi \in G_{g,n}.
\]

\begin{proof}[Proof of Claim]
By our assumption of $v$ and the definition of $\chi_v$, we have
\[
\varphi(v) =  \lambda v, \quad \lambda \in \BQ^*,~~ \varphi \in G_{g,n}, \quad \quad h(v) = \chi_v(h)v, \quad h\in H.
\]
Therefore the action of $h\varphi  \in G_{g,n}$ on the vector $v\in \BA$ satisfies
\begin{equation}\label{id1}
h\varphi  (v) = \lambda h(v) = \lambda \cdot \chi_v(h) \cdot v.
\end{equation}
On the other hand, using the identity $h\varphi(v) =\varphi(\tau_\varphi(h) (v))$, we obtain
\begin{equation}\label{id2}
h\varphi(v) = \varphi(\chi_v(\tau_\varphi(h)) v) = \lambda \cdot (\tau^*_\varphi\; \chi_v)(h) \cdot v.
\end{equation}
Combining (\ref{id1}) and (\ref{id2}), we conclude that for any $h\in H$ the following holds:
\[
\chi_v(h) =( \tau^*_\varphi \;\chi_v)(h).
\]
This proves the claim.
\end{proof}

Since the $G_{g,n}$-action on $\BA$ fixes both lines $\BQ\alpha_2$ and $\BQ\Upsilon_{g,n}$, by the claim above both characters
\[
\chi_{\alpha_2}, \chi_{\CR} \in \BX^*(H)_\BQ
\]
are fixed by $G_{g,n}$. Lemma \ref{lem4.1} and the assumption $r_{g,n} \neq c_{g,n}$ further imply that
\begin{equation}\label{phi=id}
\tau^*_\varphi = \mathrm{id}:\; \BX^*(H)_\BQ \to \BX^*(H)_\BQ.
\end{equation}
Since any element $\varphi \in G_{g,n}$ sends the weight space associated with a character $\chi \in \BX^*(H)$ to the weight space associated with $\tau^*_\varphi \;\chi \in \BX^*(H)$, we conclude from (\ref{phi=id}) that the action of $\varphi$ on $\BA$ has to preserve the weight space associated with each $H$-character.

This proves the desired statement, since the three lines 
\[
\BQ \alpha^2_2, ~~\BQ \alpha_3, ~~ \BQ \beta_2 \subset R^4_{g,n}
\]
are weight spaces associated with distinct $H$-characters (\ref{H-char}). 
\end{proof}

\subsection{Lie algebra calculations}\label{sec4.2}
The underlying $\BQ$-vector space associated with 
\[
\BA = \bigoplus_i \BA^i
\]
is infinite dimensional. For convenience, we consider the truncated $\BQ$-vector space
\[
\BA^{\leq} : = \bigoplus_{i \leq N} \BA^i.
\]
Here $N$ is any sufficiently large integer greater than $r_{g,n}$, so that all the generators $\alpha_i, \beta_i, \psi_{i,j}$ lie in $\BA^{\leq}$. Truncating the algebra structure on $\BA$ endows $\BA^{\leq}$ with a natural graded $\BQ$-algebra structure. Since all the generators of $\BA$ lie in $\BA^{\leq}$, we have
\begin{equation}\label{big_group}
\mathrm{Aut}^{\mathrm{gr}}(\BA) = \mathrm{Aut}^{\mathrm{gr}}(\BA^{\leq}).
\end{equation}
In the following, we replace $\BA$ by the truncated algebra $\BA^\leq$; this is only to ensure that everything we consider is obviously finite dimensional. For example, (\ref{big_group}) shows that $\mathrm{Aut}^{\mathrm{gr}}(\BA)$ is a finite dimensional linear algebraic group.

By Theorem \ref{thm0.8}(a,b), we see that $G_{g,n}$ is a subgroup of the stabilizer group 
\[
\mathrm{Stab}_{\mathrm{Aut}^{\mathrm{gr}}(\BA^{\leq})}(\BQ \Upsilon_{g,n})
\]
of the line $\BQ \Upsilon_{g,n} \subset \BA^{\leq}$ in $\mathrm{Aut}^{\mathrm{gr}}(\BA^{\leq})$. Therefore we have
\begin{equation}\label{upperbound}
\dim_\BQ \mathfrak{g}_{g,n} \leq \dim_\BQ \mathrm{Lie}\:\mathrm{Stab}_{\mathrm{Aut}^{\mathrm{gr}}(\BA^\leq)}(\BQ \Upsilon_{g,n}). 
\end{equation}
Calculating the dimension of the right-hand side of (\ref{upperbound}) is a simple linear algebra problem as we explain in the following.

We consider the $\BQ$-vector space $\mathrm{Der}^{\mathrm{gr}}(\BA^{\leq})$ of graded derivations on $\BA^\leq$, \emph{i.e.} $D\in \mathrm{Der}^{\mathrm{gr}}(\BA^{\leq})$ are derivations which preserve the grading:
\[
D(a_1 a_2) = a_1 D(a_2) +  D(a_1) a_2, \quad D: \BA^i \to \BA^i.
\]
The space $\mathrm{Der}^{\mathrm{gr}}(\BA^{\leq})$ is the Lie algebra of the group (\ref{big_group}). As a $\BQ$-vector space, it can be identified with
\[
\left(\bigoplus_{i=2}^{n} \BA^{2i-2} \right) \oplus \left(\bigoplus_{i=2}^{n} \BA^{2i} \right) \oplus \left(\bigoplus_{i=2}^{n}  \bigoplus_{j=1}^{2g} \BA^{2i-1}\right),
\]
where the first, the second, and the third factors parameterize the choices of $D(\alpha_i)$, $D(\beta_i)$, and $D(\psi_{i,j})$ respectively.

In order to calculate the tangent space of the stabilizer group of $\BQ\Upsilon_{g,n}$, we consider the \emph{linear map}:
\begin{equation}\label{linear2}
\rho_{\CR}: \mathrm{Der}^{\mathrm{gr}}(\BA^{\leq}) \to \BA^{r_{g,n}}/\BQ \Upsilon_{g,n}, \quad D \mapsto [D(\Upsilon_{g,n})].
\end{equation}
More precisely, this map is defined by applying the graded derivation $D$ to the element $\Upsilon_{g,n} \in \BA^{r_{g,n}}$ composed with the projection to the quotient space $\BA^{r_{g,n}}/\BQ\Upsilon_{g,n}$. 

\begin{lem}\label{lem4.33}
    We have
    \[
    \mathrm{Lie}\; \mathrm{Stab}_{\mathrm{Aut}^{\mathrm{gr}}(\BA^{\leq})}(\BQ \Upsilon_{g,n}) = \ker\; \rho_\CR.
    \]
\end{lem}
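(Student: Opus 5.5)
We identify the Lie algebra of the stabilizer of a line in a finite-dimensional representation with the subalgebra of derivations preserving that line; this is a standard fact for linear algebraic groups in characteristic zero. The plan has three steps.

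First, since every generator has degree $\leq N$, the restriction map $\mathrm{Aut}^{\mathrm{gr}}(\BA)\to \mathrm{Aut}^{\mathrm{gr}}(\BA^{\leq})$ is an isomorphism by (\ref{big_group}). So $\mathrm{Aut}^{\mathrm{gr}}(\BA^{\leq})$ is a closed subgroup of $\mathrm{GL}(\BA^{\leq})$. It is defined by the polynomial conditions of preserving each graded piece and the multiplication. Its Lie algebra is the space of graded endomorphisms $D$ with $D(a_1a_2)=D(a_1)a_2+a_1D(a_2)$, namely $\mathrm{Der}^{\mathrm{gr}}(\BA^{\leq})$. To see this, I would work over the dual numbers $\BQ[\epsilon]/\epsilon^2$ and write $\varphi=1+\epsilon D$; multiplicativity of $\varphi$ is exactly the Leibniz rule for $D$. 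Smoothness is automatic in characteristic $0$ (Cartier), so the tangent space at the identity is the Lie algebra.

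Second, $\mathrm{Stab}(\BQ\Upsilon_{g,n})$ is the closed subgroup of those $\varphi$ with $\varphi(\Upsilon_{g,n})\in\BQ\Upsilon_{g,n}$. Equivalently, $\varphi$ preserves the line in the finite-dimensional representation $\BA^{r_{g,n}}$. Over the dual numbers, the condition on $1+\epsilon D$ is $\Upsilon_{g,n}+\epsilon D(\Upsilon_{g,n})\in (\BQ[\epsilon]/\epsilon^2)\Upsilon_{g,n}$, which holds iff $D(\Upsilon_{g,n})\in \BQ\Upsilon_{g,n}$. That is exactly $[D(\Upsilon_{g,n})]=0$ in $\BA^{r_{g,n}}/\BQ\Upsilon_{g,n}$, i.e.\ $D\in\ker\rho_\CR$. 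Again by Cartier the stabilizer group scheme is smooth, so its Lie algebra equals this scheme-theoretic tangent space. The alternative route is the classical statement (e.g.\ Humphreys/Borel) that for $G\subset \mathrm{GL}(V)$ and a line $\ell\subset V$, $\mathrm{Lie}\,\mathrm{Stab}_G(\ell)=\{X\in\mathrm{Lie}\,G: X\ell\subset\ell\}$; I would cite it directly, applied to $V=\BA^{r_{g,n}}$ and $\ell=\BQ\Upsilon_{g,n}$.

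The only real subtlety is the distinction between the reduced stabilizer and the scheme-theoretic stabilizer, and between the Lie algebra and the tangent space. In characteristic zero all group schemes are reduced, so this is resolved by invoking Cartier's theorem. Everything else is formal bookkeeping. I expect no genuine obstacle; the one point to state carefully is that derivations on $\BA^{\leq}$ and on $\BA$ agree because both are determined by their values on the generators.
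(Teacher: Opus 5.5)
Your proposal is correct and takes the same approach as the paper, which simply notes that $D(\Upsilon_{g,n})\in\BQ\Upsilon_{g,n}$ is the infinitesimal version of the condition $\varphi(\Upsilon_{g,n})\in\BQ\Upsilon_{g,n}$. Your dual-number computation, together with the appeal to Cartier's theorem to identify the tangent space of the scheme-theoretic stabilizer with the Lie algebra, spells out that one-line argument rigorously.
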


\begin{proof}\label{lem4.3}
    The condition $D\in \mathrm{Ker}(\rho_\CR)$ can be written explicitly as 
    \[
    D(\Upsilon_{g,n}) \in \BQ \Upsilon_{g,n};
    \]
    this is the infinitesimal version for the stabilization condition 
    \[
    \varphi(\Upsilon_{g,n}) \in \BQ \Upsilon_{g,n}. \qedhere
    \]
\end{proof}

The linear map (\ref{linear2}) is explicit in terms of the relation $\Upsilon_{g,n}$. The following proposition is deduced by a computer calculation from the formula of Theorem \ref{thm: the unique relation}.

\begin{prop}\label{prop4.4}
    For $(g,n)=(2,5)$, we have 
    \[
    \dim \ker (\rho_\CR) = 12.
    \]
\end{prop}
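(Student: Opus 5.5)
The statement is a finite linear-algebra computation, and I would carry it out explicitly, by computer, from the formula of Theorem \ref{thm: the unique relation}.

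\textbf{Setting up the domain.} By the description of $\mathrm{Der}^{\mathrm{gr}}(\BA^{\leq})$ above, a graded derivation $D$ is determined by the images of the generators:
\begin{itemize}
\item $D(\alpha_i)\in \BA^{2i-2}$, $D(\beta_i)\in\BA^{2i}$ and $D(\psi_{i,j})\in \BA^{2i-1}$, for $2\le i\le 5$ and $1\le j\le 4$;
\item these images are arbitrary, since $\BA$ is free graded-commutative.
\end{itemize}
I would write out monomial bases of $\BA^k$ for $k\le 10$, which is finite and small. This gives an explicit basis of $\mathrm{Der}^{\mathrm{gr}}(\BA^{\leq})$, consisting of elementary derivations sending one generator to one monomial and all other generators to zero.

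\textbf{The matrix of $\rho_\CR$.} For each elementary derivation $D$, I would compute $D(\Upsilon_{2,5})$ via the Leibniz rule. This must be done with Koszul signs for the odd generators. Since $\Upsilon_{2,5}$ is written in terms of the $\gamma_{rs}$, I would first expand each $\gamma_{rs}$ into the $\psi_{r,j}\psi_{s,j'}$. I would then express $D(\Upsilon_{2,5})$ in the monomial basis of $\BA^{22}$ and reduce modulo the line $\BQ\Upsilon_{2,5}$. A convenient way to do this is to fix one monomial $m_0$ with nonzero coefficient in $\Upsilon_{2,5}$ (for instance $\alpha_2\beta_2^5$) and subtract the appropriate multiple of $\Upsilon_{2,5}$ to kill the $m_0$-coordinate. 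This yields an exact rational matrix for $\rho_\CR$, and the kernel dimension is the number of columns minus its rank. The rank can be computed exactly over $\BQ$, or modulo a large prime as in Proposition \ref{prop3.12}.

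\textbf{Reducing the size and checking the answer.} There are two useful reductions.
\begin{itemize}
\item \emph{Splitting by weights.} $\rho_\CR$ is equivariant for the torus in $H$ together with the Chern grading torus. So the map splits by weights, and one may compute block by block. Only derivations whose weight shift keeps $D(\Upsilon_{2,5})$ nonzero in bounded weight ranges matter, and Lemma \ref{weight_constr} bounds these.
\item \emph{Lower bound on the kernel.} $\ker\rho_\CR$ contains $\mathrm{Lie}(H)\simeq \mathfrak{gsp}_4\oplus\BQ$. This has dimension $11$, and it must appear in the kernel.
\end{itemize}
A second torus direction, the Chern-grading scaling, also preserves $\BQ\Upsilon_{2,5}$, because $\Upsilon_{2,5}$ is bihomogeneous. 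I expect this is what gives $12$. Together these give a lower bound of $12$, so the computation only needs to show that the rank is large enough to force equality.

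\textbf{Main obstacle.} The hard part is the size and sign-correctness of the sparse rank computation in the $\psi$-sector. To guard against errors, I would check that the eleven $\mathfrak{gsp}_4\oplus\BQ$ derivations and the Chern scaling derivation numerically lie in the computed kernel.
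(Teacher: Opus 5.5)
Your plan matches the paper's proof. It enumerates the $468$ elementary graded derivations, each sending one generator to one monomial. It realizes $\BA^{22}/\BQ\Upsilon_{2,5}$ by eliminating one monomial of $\Upsilon_{2,5}$, which gives a $3867\times 468$ matrix. It then computes the rank, which the paper does exactly over $\BQ$ and finds to be $456$.

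One correction to your sanity check concerns the dimension count. $\dim\mathfrak{gsp}_4=11$, so $\mathrm{Lie}(H)\simeq\mathfrak{gsp}_4\oplus\BQ$ is already $12$-dimensional. Its $\BQ$ factor \emph{is} the Chern-grading scaling, and the cohomological-grading scaling also lies in $H$, as a combination of the center of $\mathrm{GSp}$ with that factor. So there is no extra torus direction beyond $H$. The lower bound $12$ is correct, but it comes from $\mathrm{Lie}(H)$ alone and equals $2g^2+g+2$ in (\ref{linear-alge-final}); the computation therefore shows $\ker\rho_\CR=\mathrm{Lie}(H)$.

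This lower bound is what makes your optional mod-$p$ route valid, because the rank modulo $p$ only bounds the rational rank from below. If you split into weight blocks, you must still count every block: derivations that kill $\Upsilon_{2,5}$ for weight reasons lie in the kernel and cannot be discarded.
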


\begin{proof}
The proof is a direct matrix calculation using the explicit formula of $\Upsilon_{2,5}$ in Theorem \ref{thm: the unique relation}.
\href{https://github.com/szqzs/R5G2JK/blob/main/macaulay2/verify_stabilizer_lie_dimension.m2}
{The Macaulay2 script}
in the repository \cite{R5G2JK} constructs the free graded algebra $\BA$,
writes the relation $\Upsilon_{2,5}$, and lists all degree-preserving derivations.
There are $468$ such derivation parameters.
The script constructs the
quotient $\BA^{22}/\BQ\Upsilon_{2,5}$ explicitly by choosing one monomial appearing in $\Upsilon_{2,5}$ and using
the equation $\Upsilon_{2,5}=0$ to eliminate that monomial.  Since
$\dim_\BQ \BA^{22}=3868$,
this gives a quotient coordinate space of dimension $3867$.  Applying all
degree-preserving derivations to $\Upsilon_{2,5}$ gives a rational matrix of size
$3867\times 468$.  The script computes its rank over $\BQ$ to be $456$.
Therefore $\dim \ker(\rho_\CR)=468-456=12$.
\end{proof}

\begin{proof}[Finishing the proof of Theorem \ref{thm0.7}]
   Combining (\ref{upperbound}), Lemma \ref{lem4.33}, and Proposition \ref{prop4.4},  we obtain that 
   \[
   \dim_\BQ \mathfrak{g}_{2,5} \leq 12.
   \]
   Since $r_{2,5} = 22\neq 12 = c_{2,5}$, we conclude Theorem \ref{thm0.7} from Proposition \ref{normal_group}.
\end{proof}

\;

\appendix
\refstepcounter{section}
\section*{Appendix. Complex analytic degree dependence of Higgs moduli spaces}

In this appendix, we show that the Higgs moduli space for a fixed curve $C$ of genus $g \geq 2$, considered as a complex analytic variety, depends on the degree.

\begin{thm}\label{thm:fixed-curve-higgs-degree-dependence}
Let the genus $g(C)\geq 2$.
Let $d,d'\in \BZ$ satisfy $(n,d)=(n,d')=1$.
Then $\left(M^{\mathrm{Dol}}_{n,d}\right)^{\mathrm{an}}
\simeq
\left(M^{\mathrm{Dol}}_{n,d'}\right)^{\mathrm{an}}$
as complex analytic spaces if and only if $d'\equiv \pm d \pmod n$.
\end{thm}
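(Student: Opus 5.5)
The "if" direction is elementary. If $d'\equiv d \pmod n$, tensoring with a fixed line bundle of degree $(d'-d)/n$ gives an algebraic isomorphism $M^{\mathrm{Dol}}_{n,d}\simeq M^{\mathrm{Dol}}_{n,d'}$. If $d'\equiv -d\pmod n$, the map $(\CE,\theta)\mapsto(\CE^\vee,-{}^{\mathfrak t}\theta)$, composed with a twist by a line bundle, sends degree $d$ to degree $-d$ and then to $d'$. Both maps are algebraic, hence analytic.

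For the "only if" direction, the plan is to reduce to the Harder--Narasimhan statement for $N_{n,d}$ via the $\BC^*$-action and the compact core. Let $f:(M^{\mathrm{Dol}}_{n,d})^{\mathrm{an}}\to (M^{\mathrm{Dol}}_{n,d'})^{\mathrm{an}}$ be a biholomorphism. The key intrinsic analytic invariant I would use is the \emph{nilpotent cone} (the downward flow/core), or more robustly the fact that $\Gamma(\CO)$ separates structure.

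Concretely, I would use Hartogs/Liouville-type rigidity. The Hitchin map $h: M^{\mathrm{Dol}}_{n,d}\to \BA=\oplus_i H^0(C,\omega_C^{i})$ is proper with connected fibres, and $M^{\mathrm{Dol}}$ is holomorphically convex. Its Remmert reduction is exactly $h$, because $h$ is proper, surjective with connected fibres onto a Stein (affine) space, and $h_*\CO=\CO_\BA$. The Remmert reduction is canonical, so $f$ descends to a biholomorphism $\bar f:\BA\to\BA$ with $h'\circ f=\bar f\circ h$.

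Next I want to pin down the fibre over $0$, the nilpotent cone $h^{-1}(0)$, whose irreducible component is $N_{n,d}$ (the component of maximal dimension consisting of $\theta=0$, i.e. the zero section of $T^*N$ is dense in it). The obstacle is that $\bar f$ need not fix $0$. To handle this I would characterize $h^{-1}(0)$ analytically. It is the unique fibre of $h$ whose reduced structure has the "most singular" behaviour. Alternatively, and more cleanly, I would use the discriminant: the locus of $a\in\BA$ over which fibres are not smooth abelian varieties is a hypersurface $\mathcal D$ preserved by $\bar f$. Then $0$ is the unique point of maximal multiplicity (or the unique point where all components of $\mathcal D$ meet with the full cone structure), using that $\mathcal D$ is a cone stable under the weighted $\BC^*$-action with vertex $0$. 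I expect this identification to be the main obstacle. A fallback is to compare fibres directly: $h^{-1}(0)$ is the only fibre containing a compact component of dimension $\dim N$ which is simply connected and not a (quotient of an) abelian fibration. Other fibres' components are built from compactified Jacobians and partial nilpotent data of spectral covers.

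Granting $\bar f(0)=0$, $f$ restricts to a biholomorphism of nilpotent cones. It sends irreducible components to irreducible components, and the component $N_{n,d}$ must go to $N_{n,d'}$. This component is distinguished, e.g. as the unique component on which the restriction of the holomorphic symplectic form's underlying complex structure gives a component that is smooth and whose normal bundle is its cotangent bundle. Alternatively it is the only component whose Betti numbers match the Harder--Narasimhan count, and components are compact projective manifolds. Hence $N_{n,d}\cong N_{n,d'}$ as complex manifolds, in particular they are homeomorphic. By Harder--Narasimhan \cite{HN} this forces $d\equiv\pm d'\pmod n$.
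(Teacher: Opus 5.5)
Your first step (the Hitchin map is the Remmert reduction, so $f$ descends to a biholomorphism $\bar f$ of the Hitchin base $A$) and your final appeal to Harder--Narasimhan agree with the paper. Both middle steps, however, have genuine gaps.

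\textbf{Locating the nilpotent cone.} The statement you propose to grant, $\bar f(0)=0$, is false in general. For $\omega\in H^0(C,\omega_C)$, the map $(\CE,\theta)\mapsto(\CE,\theta+\omega\,\mathrm{Id})$ is an algebraic automorphism of $M^{\mathrm{Dol}}_{n,d}$. It covers an automorphism of $A$ sending $0$ to $(-n\omega,\binom n2\omega^2,\ldots,(-1)^n\omega^n)\neq 0$.

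For the same reason, none of your intrinsic characterizations of the point $0$ can work. The discriminant is invariant under these automorphisms, so every point of $A_1=H^0(C,\omega_C)\subset A$ has the same local structure as $0$. In particular it has the same multiplicity and the same cone structure. Likewise, every fibre over $A_1$ is isomorphic to $h^{-1}(0)$, so the nilpotent cone is not ``the only fibre'' with any intrinsic property.

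Your fallback also rests on false facts. In the $\mathrm{GL}_n$ setting $\pi_1(N_{n,d})\simeq\BZ^{2g}$ via $\det:N_{n,d}\to\mathrm{Pic}^d(C)$. Moreover, every fibre of $h$ is Lagrangian, hence pure of dimension $\dim N_{n,d}$.

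The correct target is $\bar f(0)\in A_1$, which suffices after composing with such an automorphism. Proving it needs a rigidity argument you have not supplied. The paper argues as follows:
\begin{itemize}
\item It transports the scaling $\BG_m$-action through $\bar f$, so fibres deform trivially along the transported orbits.
\item By Torelli, using that smooth spectral curves are not hyperelliptic, the spectral curves are then also constant along these orbits.
\item By Biswas--G\'omez, the kernel of the Kodaira--Spencer map is tangent to the orbit of $A_1\rtimes\BG_m$. Hence each transported orbit lies in such an orbit.
\item The limit point $\bar f(0)$, projected to $A/A_1$, must then be $0$ by a dimension count as the generic point varies.
\end{itemize}

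\textbf{Identifying $N_{n,d}$ among the components.} Your criteria here are unsupported. The normal-bundle condition holds for every smooth Lagrangian, so your first criterion reduces to ``$N_{n,d}$ is the only smooth component,'' for which you give no argument. Your assertion that the components are compact projective manifolds is likewise unjustified.

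The Betti-number criterion is circular. You would need to know that no component of $h_{d'}^{-1}(0)$ other than $N_{n,d'}$ has the Betti numbers of $N_{n,d}$, which is essentially what is to be shown.

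The paper proceeds in two steps:
\begin{itemize}
\item The biholomorphism of nilpotent cones is algebraic by GAGA, since they are projective.
\item $N_{n,d}$ is then the unique component admitting no nontrivial algebraic $\BG_m$-action. Every other component carries the nontrivial scaling action. On $N_{n,d}$, any $\BG_m$-action preserves the fibres of $\det$, because a morphism $\BG_m\to\mathrm{Pic}^d(C)$ is constant. The fixed-determinant moduli space has no nontrivial $\BG_m$-action by Biswas--G\'omez.
\end{itemize}
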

\begin{proof}
If $d'\equiv \pm d \pmod n$, then the two moduli spaces are isomorphic as complex algebraic varieties as discussed in Section \ref{sec0.1}.

We now prove the only if part. 
The statement is vacuous if $n\leq 4$.
Therefore, we assume that $n\geq 5$ from now on, and assume there is an isomorphism of complex analytic spaces
\[
\Phi: \left(M^{\mathrm{Dol}}_{n,d}\right)^{\mathrm{an}}
\xrightarrow{\simeq}
\left(M^{\mathrm{Dol}}_{n,d'}\right)^{\mathrm{an}}.
\]

{\noindent \bf Step 1.} We first show that the Hitchin system is preserved.

\medskip

    By \cite[\S 2.2, Remark 2.8]{Survey}, the Hitchin morphism $h_d: M_{n,d}^{\mathrm{Dol}}\to A$ is the affinization morphism, \emph{i.e.}, the initial morphism from $M_{n,d}^{\mathrm{Dol}}$ to an affine scheme in the category of schemes.
    Note that $h_d$ is also the Remmert reduction, \emph{i.e.} the initial morphism from the analytification $(M_{n,d}^{\mathrm{Dol}})^\mathrm{an}$ to a Stein space in the category of complex analytic spaces. 
    Indeed, since $h_d$ is proper, the relative GAGA implies that analytification commutes with $h_{d,*}$ on coherent sheaves.
    Therefore, we have that \[
    h_{d,*}\CO_{M^{\mathrm{Dol}}_{n,d}}^{\mathrm{an}}\simeq \left( h_{d,*} \CO_{M_{n,d}^{\mathrm{Dol}}}  \right)^{\mathrm{an}} \simeq\CO_A^{\mathrm{an}}.
    \]
    Thus $h_{d}$ is the Remmert reduction by \cite[\S 2, Satz 1]{GrauertModifikationen}. By the uniqueness of Remmert reduction, we obtain that any biholomorphism $\Phi:(M^{\mathrm{Dol}}_{n,d})^{\mathrm{an}}
\xrightarrow{\simeq}
(M^{\mathrm{Dol}}_{n,d'})^{\mathrm{an}}$
induces a biholomorphism $\phi:A^{\mathrm{an}}\to A^{\mathrm{an}}$ such that $h_{d'}\circ \Phi=\phi\circ h_d$.

\medskip

{\noindent \bf Step 2.} We next argue that $\Phi$ identifies the nilpotent cones using the $\BG_m$-action.

\medskip

We know that every Hitchin morphism $h_d$ is $\BG_m$-equivariant with respect to the natural $\BG_m$-action given by scaling the Higgs field.
Now we want to discuss how canonical this $\BG_m$-equivariant structure is.
We take the natural $\BG_m$-equivariant structure on $M_{n,d}^{\mathrm{Dol}}$ using the modular description, and then endow $h_{d'}$ with the $\BG_m$-equivariant structure via taking conjugation by $\Phi$ and $\phi$.
\emph{A priori}, this endowed $\BG_m$-action may be very different than the natural modular action on $h_{d'}$.
However, we claim that they differ only by the rank 1 parts.
Namely, we must have 
\[
\phi(0)\in A_{1}:=H^0(C,\omega_C)\hookrightarrow \bigoplus_{i=1}^n H^0(C,\omega_C^{\otimes i})=A,
\]
where the inclusion is given by the $n$-th thickening of a degree 1 spectral curve 
\[
\omega\mapsto
\bigl(-n\omega,\binom n2\omega^2,\ldots,(-1)^n\omega^n\bigr).
\]
In the fixed-determinant trace-free case, this
is exactly \cite[Proposition 5.1]{BG-Higgs}. We explain in the following that similar ideas work in the $\operatorname{GL}_n$-case. 

We use $\BG_m^{d'}$ to denote the natural (algebraic) modular $\BG_m$-action on $A$, and let $\BG_m^{\phi}$ be the (holomorphic) $\BG_m$-action on $A$ induced by $\phi$. For any $a\in A$, the Hitchin fiber $h^{-1}_{d'}(a)$ deforms trivially along the orbit $\BG_m^{\phi}\cdot a$.
Now assume that $a$ corresponds to a nonsingular spectral curve $\pi_a: S_a\to C$.
Then we first claim that $S_a$ is not hyperelliptic. 
Indeed, we have $\omega_{S_a}\simeq\pi_a^*\omega_C^{\otimes n}$,
and that $\omega_C^{\otimes n}$ is very ample. If $S_a$ were
hyperelliptic, the morphism defined by the sections of the subspace $\pi_a^*H^0(C,\omega_C^{\otimes n})
\subset H^0(S_a,\omega_{S_a})$
would factor through the hyperelliptic map $S_a\to\BP^1$, forcing $\pi_a$ to factor through
$\BP^1$, which is impossible because the curve $C$ has genus $g\geq 2$. Since $S_a$ is not hyperelliptic and the Hitchin fiber over $a$ recovers the Jacobian variety of the curve $S_a$, the local Torelli theorem implies that $S_a$ itself also deforms trivially along the orbit $\BG_m^{\phi} \cdot a$. 

We consider the group $G:=H^0(C,\omega_C)\rtimes\BG_m^{d'}$ which
acts on the Hitchin base by translation of the spectral coordinate and
by the natural scaling action.
By \cite[Proposition 4.2]{BG-Higgs}, the kernel of the Kodaira-Spencer map $\ker\bigl(T_aA\rightarrow H^1(S_a,T_{S_a})\bigr)$ is identified with $T_a(G\cdot a)$.
Therefore, the orbit
$\BG_m^\phi\cdot a$ is contained in the single orbit $G\cdot a$; in particular, the boundary point $\phi(0) \in \overline{\BG^\phi_m\cdot a}$ lies in the orbit-closure $\overline{G\cdot a}$. We consider the quotient map
\[
q: A \to A':=A/A_1
\]
induced by the natural translation action of $A_1 = H^0(C, \omega_C)$ on the Hitchin base $A$. For any $a \in A$ corresponding to a nonsingular spectral curve $S_a$, we have 
\[
q(G\cdot a) = \BG_m\cdot q(a), \quad \overline{q(G\cdot a)} = \BG_m \cdot q(a) \cup \{0\}
\]
with $\BG_m$ the standard scaling action. Therefore, if  $q(\phi(0)) \neq 0 \in A'$, the image $q(\phi(0))$ must lie in the orbit $\BG_m\cdot q(a)$ for \emph{any} $a$ corresponding to a nonsingular spectral curve. This is not possible for dimension reasons since $a$ varies in a Zariski open subset of $A$. We have concluded that $q(\phi(0)) = 0\in A'$, \emph{i.e.}, the image $\phi(0)$ lies in the $A_1$-orbit of $0\in A$.

Since all the fibers of $h_{d'}$ over $A_1 \subset A$ are isomorphic to the nilpotent cone $h_{d'}^{-1}(0)$, we have a biholomorphism of nilpotent cones $h_{d}^{-1}(0)\simeq h_{d'}^{-1}(0)$. This must induce an algebraic isomorphism because the nilpotent cones are projective.

\medskip

{\noindent \bf Step 3.} Next, we show that the vector bundle component in the nilpotent cone is preserved.

\medskip

For the proof, we show that the bundle component is the unique irreducible component
of the $\mathrm{GL}_n$ nilpotent cone that does not admit a nontrivial
algebraic $\BG_m$-action. 
Indeed, on the one hand, the modular $\BG_m$-action is nontrivial for any component that is not $N^{\mathrm{Dol}}_{n,d}$. On the other hand, the determinant of every $\BG_m$-orbit of any $\BG_m$-action on $N^{\mathrm{Dol}}_{n,d}$ would give a morphism $\BG_m\rightarrow\operatorname{Pic}^d(C)$,
which must be constant. Therefore, any $\BG_m$-action on $N^{\mathrm{Dol}}_{n,d}$ must preserve the fixed determinant bundle moduli space. Finally, this fixed determinant bundle moduli space admits no nontrivial algebraic $\BG_m$-actions by
\cite[Proposition 3.1]{BG-Higgs}. 

In conclusion, the algebraic isomorphism $h_d^{-1}(0)\simeq h_{d'}^{-1}(0)$ must identify the unique components with this property, giving $N^{\mathrm{Dol}}_{n,d}\simeq N^{\mathrm{Dol}}_{n,d'}$.

\medskip

{\noindent \bf Step 4.} The theorem now follows from the degree-dependence result of
Harder--Narasimhan \cite[Corollary 3.3.4]{HN}.
\end{proof}

\begin{rmk}
Theorem \ref{thm:fixed-curve-higgs-degree-dependence} concerns the degree dependence of $M_{n,d}^{\mathrm{Dol}}$ as \textit{complex analytic spaces}, whereas the main body of this paper concerns the degree dependence of $M_{n,d}^{\mathrm{Dol}}$ as {homotopy types}. The latter degree-dependence phenomenon, when it occurs, is much stronger than the former and is, in general, much harder to prove. The proof of Theorem \ref{thm:fixed-curve-higgs-degree-dependence} uses the complex structure of $M_{n,d}^{\mathrm{Dol}}$ and the Hitchin system in an essential way; it is not obvious to us how to prove the corresponding statements for the complex analytic spaces associated with the de Rham or the Betti moduli spaces in general.
\end{rmk}


\begin{thebibliography}{99}



\bibitem{Alper} J. Alper, P. Belmans, D. Bragg, J. Liang, T. Tajakka, {\em Projectivity of the moduli space of vector bundles on a curve,} London Math. Soc. Lecture Note Ser., 480
Cambridge University Press, Cambridge, 2022, 90--125.




\bibitem{AB} M. Atiyah, R. Bott, {\em The Yang-Mills equations over Riemann surfaces,} Philos. Trans. Roy. Soc. London Ser. A 308 (1983), no. 1505, 523--615.



















\bibitem{PB} P. Bousseau, {\em Scattering diagrams, stability conditions, and coherent sheaves on $\BP^2$,} J. Algebr. Geom. 31(4), 593--686, 2022.




\bibitem{BG-Higgs}
I. Biswas and T. L. G\'omez,
{\em A Torelli theorem for the moduli space of Higgs bundles on a curve,}
Q. J. Math. 54 (2003), 159--169.




\bibitem{BGPG} S. Bradlow, O. Garc\'ia--Prada, P. Gothen, {\em Homotopy groups of moduli spaces of representations,} Topology 47 (2008), 203--224,








\bibitem{dCHM1} M. A. de Cataldo, T. Hausel, and L. Migliorini, {\em Topology of Hitchin systems and Hodge theory of character varieties: the case $A_1$,} Ann. of Math. (2) 175 (2012), no.~3, 1329--1407.

























\bibitem{dCMSZ} M. A. de Cataldo, D. Maulik, J. Shen, S. Zhang, {\em Cohomology of the moduli of Higgs bundles via positive
characteristic,} J. Eur. Math. Soc. 27 (2025), no. 4, 1385--1405.



\bibitem{dCZ}  M. A. de Cataldo, S. Zhang, {\em A cohomological non-abelian Hodge theorem in positive characteristic,} Algebr. Geom. 9 (2022), no. 5, 606--632.










\bibitem{Charles} F. Charles, {\em Conjugate varieties with distinct real cohomology algebras,} J. Reine Angew. Math. 630 (2009), pp. 125--139.


\bibitem{Chen-Zhu} T.-H. Chen, X. Zhu, Non-Abelian Hodge Theory for algebraic curves in characteristic p, Geom.
Funct. Anal. Vol. 25 (2015) 1706--1733.

















\bibitem{DU} G. D. Daskalopoulos and K. K. Uhlenbeck, {\em An application of transversality to the topology of the moduli space of stable bundles,} Topology 30 (1991), no. 4, 639--653.




\bibitem{Da} B. Davison, L. Hennecart, T. Kinjo, O. Schiffmann, and E. Vasserot, {\em Hecke operators on symplectic surfaces and $\chi$-independence,} arXiv 2607.01355.









\bibitem{EK} R. Earl, F. Kirwan, {\em Complete sets of relations in the cohomology rings of moduli spaces of holomorphic bundles and parabolic bundles over a Riemann surface,} Proc. London Math. Soc. (3) 89 (2004), no. 3, 570--622. 

























\bibitem{GrauertModifikationen}
H. Grauert,
{\em \"Uber Modifikationen und exzeptionelle analytische Mengen},
Math. Ann. 146 (1962), 331--368.


\bibitem{Groch} M. Groechenig, {\em Moduli of flat connections in positive characteristic,} Math. Res. Lett. 23 (2016), no.
4, 989--1047.


\bibitem{GS} M. Groechenig and S. Shen, {\em Complex $K$-theory of moduli spaces of Higgs bundles,} J. Eur. Math. Soc. (JEMS), published online, 2025.



\bibitem{GWZ} M. Groechenig, D. Wyss, and P. Ziegler, {\em Mirror symmetry for moduli spaces of Higgs bundles via p-adic integration,} Invent. Math. 221. 505--596(2020)









\bibitem{HN} G. Harder, M. S. Narasimhan, {\em On the cohomology groups of moduli spaces of vector bundles on curves,} Math. Ann. 212 (1974/75) 215--248.




\bibitem{H_Survey} T. Hausel, {\em Mirror symmetry and Langlands duality in the non-Abelian
Hodge theory of a curve,} ``Geometric Methods in Algebra and Number Theory'', Progress in Mathematics, Vol. 235 Fedor Bogomolov, Yuri
Tschinkel (Eds.), Birkh\"auser, 2005.





\bibitem{HRV} T. Hausel and F. Rodriguez-Villegas, {\em Mixed Hodge polynomials of character varieties,} with an appendix by Nicholas M. Katz, Invent. Math. 174 (2008), no. 3, 555--624. 





\bibitem{HT2} T. Hausel, M. Thaddeus, {\em Relations in the cohomology ring of the moduli space of rank 2 Higgs bundles,} J. Amer. Math. Soc. 16 (2003), no. 2, 303--327.

\bibitem{HT1} T. Hausel, M. Thaddeus, {\em Generators for the cohomology ring of the moduli space of rank 2 Higgs bundles,} Proc. London Math. Soc. (3) 88 (2004), no. 3, 632--658. 



\bibitem{Survey} T. Hausel, {\em Global topology of the Hitchin system,} Handbook of moduli. Vol. II, 29--69, Adv. Lect. Math. (ALM), 25, Int. Press, Somerville, MA, 2013.






\bibitem{HMMS} T. Hausel, A. Mellit, A. Minets, and O. Schiffmann, {$P = W$ via $\CH_2$,} arXiv:2209.05429.









\bibitem{Hit} N. J. Hitchin, {\em The self-duality equations on a Riemann surface,} Proc. London Math. Soc. (3) 55 (1987), no. 1, 59--126.

\bibitem{Hit1} N. J. Hitchin, {\em Stable bundles and integrable systems,} Duke Math. J. 54 (1987) 91--114.















\bibitem{JK} L. Jeffrey, F. C. Kirwan, {\em Intersection theory on moduli spaces of holomorphic bundles of arbitrary rank on a Riemann surface,} Ann. of Math. (2) 148 (1998), no. 1, 109--196.






















\bibitem{KK}
T. Kinjo and N. Koseki,
{\em Cohomological $\chi$-independence for Higgs bundles and Gopakumar--Vafa invariants,}
J. Eur. Math. Soc. (JEMS) 28 (2026), no. 2, 619--671.




\bibitem{LMP} W. Lim, M. Moreira, and W. Pi, {\em Cohomological $\chi$-dependence of ring structure for the moduli of one-dimensional sheaves on $\BP^2$,} Forum Math. Sigma. 12 (2024) No. e47, 17 pp.


























\bibitem{Markman} E. Markman, {\em Generators of the cohomology ring of moduli spaces of sheaves on symplectic surfaces,} J. Reine Angew. Math. 544 (2002), 61--82. 


\bibitem{Markman-integral} E. Markman, {\em Integral generators for the cohomology ring of moduli spaces of sheaves over Poisson surfaces}, Adv. Math. 208.2 (2007), 622--646.






\bibitem{MS_Pi} D. Maulik and J. Shen, {\em Endoscopic decompositions and the Hausel--Thaddeus conjecture,} Forum Math. Pi, 9, (2021), No. e8, 49 pp.

\bibitem{MS_chi} D. Maulik and J. Shen, {\em Cohomological $\chi$-independence for moduli of one-dimensional sheaves and moduli of Higgs bundles,} Geom. Topol. 27:4 (2023), 1539-1586. 

\bibitem{MS_PW} D. Maulik and J. Shen, {\em The $P=W$ conjecture for $\mathrm{GL}_n$,} Ann. of Math. (2) 200 (2024), no. 2, 529--556.

\bibitem{MSY2} D. Maulik, J. Shen, and Q. Yin, {\em  Algebraic cycles and Hitchin systems,} arXiv 2407.05188. To appear at Ann. Sci. \'Ec. Norm. Sup\'er,2026.









\bibitem{Mellit} A. Mellit, {\em Poincar\'e polynomials of moduli spaces of Higgs bundles and character varieties (no punctures),} Invent. Math.221, (2020), 301--327.
















\bibitem{NS} M. S. Narasimhan and C. S. Seshadri, {\em Stable and unitary vector bundles on a compact Riemann surface,} Ann of Math. Vol. 82, No. 3 (1965), pp. 540--567.










\bibitem{OV} A. Ogus and V. Vologodsky, {\em Nonabelian Hodge theory in characteristic $p$,}  Publ. Math. Inst. Hautes \'Etudes Sci. (2007), no. 106, 1–138. 





 







\bibitem{Ramanan} S. Ramanan, {\em The moduli spaces of vector bundles over an algebraic curve,} Math. Ann. 200, no. 1 (1973): 69-84.

\bibitem{Reed} D. Reed, {\em The topology of conjugate varieties,} Math. Ann. 305 (1996), no. 2, 287--309.



\bibitem{Sch} O. Schiffmann, {\em Indecomposable vector bundles and stable Higgs bundles over smooth projective curves,} Ann. of Math. (2), 183(1):297--362, 2016.




\bibitem{Serre}
J.-P. Serre, \emph{Exemples de vari\'et\'es conjugu\'ees non-hom\'eomorphes}, in {Oeuvres/Collected Papers, Vol.~II (1960--1971)}, pp.~246--249, Springer, 1986.







\bibitem{R5G2JK}
J. Shen and S. Zhang,
{\em Rank-5 genus-2 Jeffrey--Kirwan computation},
GitHub repository,
\url{https://github.com/szqzs/R5G2JK}.

\bibitem{Shende} V. Shende, {\em The weights of the tautological classes of character varieties,} Int. Math. Res. Not. 2017, no. 22, 6832--6840.


\bibitem{Sull} D. Sullivan, {\em Galois symmetry in manifold theory at the primes,} Proceedings of the ICM, Nice 1970, Vol. II, 169--175, 1971.









\bibitem{Simp} C. T. Simpson, {\em Higgs bundles and local systems,} Inst. Hautes \'Etudes Sci. Publ. Math. No. 75 (1992), 5--95.

\bibitem{Si1994II} C. T. Simpson, {\em Moduli of representations of the fundamental group of smooth projective varieties II,"} Publ. Math. IHES.  No. 80 (1994), 5--79.























\bibitem{Yu} H. Yu, {\em Comptage des syst\`em locaux $\ell$-adiques sur une courbe,} Ann. of Math. (2) 197 (2023), no. 2, 423--531.










\end{thebibliography}
\end{document}